\newtheorem{theorem}{Theorem}
\newtheorem{assumption}{Assumption}
\newtheorem{corollary}{Corollary}
\newtheorem{example}{Example}
\newtheorem{lemma}{Lemma}
\newtheorem{proposition}{Proposition}
\newtheorem{remark}{Remark}
\numberwithin{equation}{section}
\DeclareMathOperator{\diag}{diag}
\DeclareMathOperator{\logit}{logit}
\newcommand{\calA}{\ensuremath{\mathcal{A}}}
\newcommand{\calB}{\ensuremath{\mathcal{B}}}
\newcommand{\calG}{\ensuremath{\mathcal{G}}}
\newcommand{\calS}{\ensuremath{\mathcal{S}}}
\newcommand{\calN}{\ensuremath{\mathcal{N}}}
\newcommand{\calT}{\ensuremath{\mathcal{T}}}
\newcommand{\calE}{\ensuremath{\mathcal{E}}}
\newcommand{\norm}[1]{\|{#1}\|}
\newcommand{\abs}[1]{\left|{#1}\right|}
\newcommand{\set}[1]{\left\{{#1}\right\}}
\newcommand{\est}[1]{\widehat{#1}}
\newcommand{\expec}{\ensuremath{\mathbb{E}}}
\newcommand{\indic}[1]{\mathbf{1}_{#1}}
\newcommand{\matR}{\ensuremath{\mathbb{R}}}
\newcommand{\argmin}[1]{\underset{#1}{\operatorname{argmin}}}
\newcommand{\prob}{\ensuremath{\mathbb{P}}}
\newcommand{\rev}[1]{\textcolor{black}{#1}}
\newcommand{\lip}{\ensuremath{M}}
\newcommand{\lammax}{\lambda_{\max}}
\newcommand{\bmax}{b_{\max,\delta}(t)}
\newcommand{\cardN}{|\calN_{\delta}(t)|}
\newcommand{\cardNij}{|\calN_{ij,\delta}(t)|}
\newcommand{\cardNik}{|\calN_{ik,\delta}(t)|}
\newcommand{\cardNjm}{|\calN_{jm,\delta}(t)|}
\newcommand{\Nij}{\calN_{ij,\delta}(t)}
\newcommand{\Nik}{\calN_{ik,\delta}(t)}
\newcommand{\Njm}{\calN_{jm,\delta}(t)}
\newcommand{\Nmax}{N_{\max,\delta}(t)}
\newcommand{\Nmin}{N_{\min,\delta}(t)}
\newcommand{\Nminsq}{N_{\min,\delta}^2(t)}
\newcommand{\Nt}{\calN_{\delta}(t)}
\newcommand{\du}{d_{\delta}(t)}
\newcommand{\dusq}{d_{\delta}^2(t)}
\newcommand{\dumin}{d_{\min,\delta}(t)}
\newcommand{\dumax}{d_{\max,\delta}(t)}
\newcommand{\djl}{d_j^l(t)}
\newcommand{\Eu}{\calE_{\delta}(t)}
\newcommand{\cardEu}{\lvert\calE_{\delta}(t)\rvert}
\newcommand{\Gu}{G_{\delta}(t)}
\newcommand{\Gum}{G_{\delta}^{(m)}(t)}
\newcommand{\pu}{p_{\delta}(t)}
\newcommand{\pmin}{p_{\min}}
\newcommand{\pmax}{p_{\max}}
\newcommand{\psum}{p_{\delta,sum}(t)}
\newcommand{\psumsq}{p_{\delta,sum}^2(t)}
\newcommand{\xid}{\xi_{\delta}(t)}
\newcommand{\Ld}{\mathcal{L}_{\delta}(t)}
\newcommand{\Pbar}{\bar{P}(t)}
\newcommand{\Pbarij}{\bar{P}_{ij}(t)}
\newcommand{\Pbarji}{\bar{P}_{ji}(t)}
\newcommand{\Pbarjm}{\bar{P}_{jm}(t)}
\newcommand{\Pbarmm}{\bar{P}_{mm}(t)}
\newcommand{\Pbarmj}{\bar{P}_{mj}(t)}
\newcommand{\Pest}{\est{P}(t)}
\newcommand{\Pestjm}{\est{P}_{jm}(t)}
\newcommand{\Pestij}{\est{P}_{ij}(t)}
\newcommand{\Pestmm}{\est{P}_{mm}(t)}
\newcommand{\Pestmj}{\est{P}_{mj}(t)}
\newcommand{\Pestjj}{\est{P}_{jj}(t)}
\newcommand{\Peststar}{\est{P}^*(t)}
\newcommand{\Peststarij}{\est{P}^*_{ij}(t)}
\newcommand{\Peststarjm}{\est{P}^*_{jm}(t)}
\newcommand{\Peststarmj}{\est{P}^*_{mj}(t)}
\newcommand{\Peststarmm}{\est{P}^*_{mm}(t)}
\newcommand{\Pmest}{\est{P}^{(m)}(t)}
\newcommand{\Pmestij}{\est{P}^{(m)}_{ij}(t)}
\newcommand{\Pmestii}{\est{P}^{(m)}_{ii}(t)}
\newcommand{\Pmestik}{\est{P}^{(m)}_{ik}(t)}
\newcommand{\Pmg}{\est{P}^{(m),\calG}(t)}
\newcommand{\Pmgij}{\est{P}^{(m),\calG}_{ij}(t)}
\newcommand{\Pmgik}{\est{P}^{(m),\calG}_{ik}(t)}
\newcommand{\Pmgii}{\est{P}^{(m),\calG}_{ii}(t)}
\newcommand{\Pmgjj}{\est{P}^{(m),\calG}_{jj}(t)}
\newcommand{\Pmgmj}{\est{P}^{(m),\calG}_{mj}(t)}
\newcommand{\piest}{\est{\pi}(t)}
\newcommand{\piestm}{\est{\pi}_m(t)}
\newcommand{\piestj}{\est{\pi}_j(t)}
\newcommand{\piesti}{\est{\pi}_i(t)}
\newcommand{\pistar}{\pi^*(t)}
\newcommand{\pistari}{\pi^*_i(t)}
\newcommand{\pistarj}{\pi^*_j(t)}
\newcommand{\pistark}{\pi^*_k(t)}
\newcommand{\pistarm}{\pi^*_m(t)}
\newcommand{\pistarmax}{\pi^*_{\max}(t)}
\newcommand{\pistarmin}{\pi^*_{\min}(t)}
\newcommand{\pimest}{\est{\pi}^{(m)}(t)}
\newcommand{\pimestj}{\est{\pi}^{(m)}_j(t)}
\newcommand{\pimesti}{\est{\pi}^{(m)}_i(t)}
\newcommand{\pimestm}{\est{\pi}^{(m)}_m(t)}
\newcommand{\Dbarone}{\overline{\Delta_1}(t)}
\newcommand{\Dbar}{\overline{\Delta}(t)}
\newcommand{\Deloneij}{\Delta_{1,ij}(t)}
\newcommand{\Deloneik}{\Delta_{1,ik}(t)}
\newcommand{\Deloneii}{\Delta_{1,ii}(t)}
\newcommand{\Dbaroneij}{\overline{\Delta_{1,ij}}(t)}
\newcommand{\Dbarij}{\overline{\Delta}_{ij}(t)}
\newcommand{\Dbarii}{\overline{\Delta}_{ii}(t)}
\newcommand{\Gutil}{\tilde{G}_{\delta}(t)}
\newcommand{\Eutil}{\tilde{\calE}_{\delta}(t)}
\newcommand{\putil}{\tilde{p}_{\delta}(t)}
\newcommand{\dtil}{\tilde{d}_{\delta}(t)}
\newcommand{\dmintil}{\tilde{d}_{\min,\delta}(t)}
\newcommand{\dmaxtil}{\tilde{d}_{\max,\delta}(t)}
\newcommand{\Nmintil}{\tilde{N}_{\min,\delta}(t)}
\newcommand{\Nmaxtil}{\tilde{N}_{\max,\delta}(t)}
\newcommand{\xidtil}{\tilde{\xi}_{\delta}(t)}
\title{Dynamic Ranking with the BTL Model: A Nearest Neighbor based Rank Centrality Method}
\author[1]{Eglantine Karlé \thanks{\texttt{Email: eglantine.karle@inria.fr}}}
\author[1]{Hemant Tyagi \thanks{\texttt{Email: hemant.tyagi@inria.fr}}}
\affil[1]{Inria, Univ. Lille, CNRS, UMR 8524 - Laboratoire Paul Painlev\'{e}, F-59000}
\begin{document}
\maketitle

\begin{abstract}
Many applications such as  recommendation systems or sports tournaments involve pairwise comparisons within a collection of $n$ items, the goal being to aggregate the binary outcomes of the comparisons in order to recover the latent strength and/or  global ranking of the items. In recent years, this problem has received significant interest from a theoretical perspective with a number of methods being proposed, along with associated statistical guarantees under the assumption of a suitable generative model. 

While these results typically collect the pairwise comparisons as one comparison graph $G$, however in many applications -- such as the outcomes of soccer matches during a tournament -- the nature of pairwise outcomes can evolve with time. Theoretical results for such a dynamic setting are relatively limited compared to the aforementioned static setting.  We study in this paper an extension of the classic BTL (Bradley-Terry-Luce) model for the static setting to our dynamic setup under the assumption that the probabilities of the pairwise outcomes evolve \emph{smoothly} over the time domain $[0,1]$. Given a sequence of comparison graphs $(G_{t'})_{t' \in \mathcal{T}}$ on a regular grid $\mathcal{T} \subset [0,1]$, we aim at recovering the latent strengths of the items $w_t^* \in \mathbb{R}^n$ at any time $t \in [0,1]$. To this end, we adapt the Rank Centrality method -- a popular spectral approach for ranking in the static case -- by locally averaging the available data on a suitable neighborhood of $t$. When $(G_{t'})_{t' \in \mathcal{T}}$ is a sequence of Erdös-Renyi graphs, we provide non-asymptotic $\ell_2$ and $\ell_{\infty}$ error bounds for estimating $w_t^*$ which in particular establishes the consistency of this method in terms of $n$, and the grid size $\abs{\mathcal{T}}$. We also complement our theoretical analysis with experiments on real and synthetic data. 
\end{abstract}

\section{Introduction} \label{sec:intro}
Ranking problems arise in many domains such as sports tournaments \citep{cattelan2013dynamic,bong2020nonparametric}, recommendation systems \citep{jannach2016recommender} and even in the study of systems in biology \citep{carpenter2006cellprofiler,shah2017simple}. Typically, the data associated with these problems are  binary outcomes of pairwise comparisons. For example in a sports tournament, the data comprises of the result of each game played between two teams or players (team $i$ beat team $j$). The aim then is to recover a ranking of the items from the pairwise outcomes, often by estimating a score for each item, that represents its latent strength/quality. We remark that comparison data are frequently prefered to the attribution of a score by the users due to the sensitivity to bias of the latter. Indeed, the evaluation of the items and the strategy of scoring usually varies considerably between users and thus lead to rather inconsistent scores \citep{jannach2016recommender}. 

The collection of pairs of items which are compared can be naturally represented as an undirected graph $G=([n],\calE)$ where the $n$ vertices are the items to rank, and an edge $\set{i,j} \in \calE$ represents a comparison between items $i$ and $j$. 
Note that in order to recover the underlying ranking, the graph $G$ necessarily needs to be connected. 
Indeed, if this is not the case, then there will exist two connected components of $G$ such that there is no information available to rank items in one component with respect to the other.

\paragraph{Classic BTL model.} A classic statistical model for ranking problems was introduced by Bradley and Terry \citep{bradley1952rank} and completed by Luce \citep{luce2012individual}; this is now referred to as the BTL (Bradley-Terry-Luce) model. Denoting $w_i^* > 0$ to be the strength of item $i$, it posits that the probability that item $j$ beats item $i$, namely $y^*_{ij}$, is given by $\frac{w^*_{j}}{w^*_{i}+w^*_{j}}.$ The data is then available in the form of independent Bernoulli random variables paramaterized by $y^*_{ij}$ for each $\set{i,j} \in \calE$. This model has been studied extensively through many appplications, such as sports tournaments \citep{cattelan2013dynamic}, measurements of pain \citep{matthews1995application} or the estimation of the risk of car-crashes \citep{li2000estimating}. Moreover, numerous methods have been proposed to analyse the model theoretically with the goal of estimating the latent strengths. Two methods which have received particular attention are the maximum likelihood estimator (MLE) \citep{bradley1952rank,negahban2015rank}, and a spectral method called Rank Centrality introduced by Neghaban et al. \citep{negahban2015rank}. Both these methods have been analyzed in detail over the years for general graph topologies (eg., \citep{negahban2015rank}) and also for the special case of Erdös-Renyi graphs $G \sim \calG(n,p)$ (eg., \citep{Chen_2019,chen2020partial}). For instance, let us denote $\pi^* = \frac{w^*}{\norm{w^*}_1}$ to be the normalized vector of true scores, and $\est{\pi}$ its estimate returned by Rank Centrality. Then assuming that each comparison between two items has been performed $L$ times (for $L$ large enough) and $p\gtrsim \frac{\log n}{n}$, it holds with high probability (w.h.p) that \citep{Chen_2019,chen2020partial}
\begin{equation*}
    \frac{\norm{\est{\pi}-\pi^*}_2}{\norm{\pi^*}_2} \lesssim \frac{1}{\sqrt{npL}} \quad \text{and} \quad \frac{\norm{\est{\pi}-\pi^*}_\infty}{\norm{\pi^*}_\infty} \lesssim \sqrt{\frac{\log n}{npL}}.
\end{equation*}
Here $\lesssim, \gtrsim$ hide absolute constants, see notations in Section \ref{sec:prob_setup}. Analogous bounds hold for the MLE as well, and moreover, these bounds are optimal \citep{Chen_2019,chen2020partial}. The $\ell_{\infty}$ bounds are particularly useful as they readily lead to conditions for exact recovery of the true ranks of the items. A weaker requirement is the identification of the set of top-$K$ ranked items for some small $K$, see \citep{Chen_2019}.

\paragraph{Dynamic ranking.} The aformentioned setup considers the `static' case where only one comparison graph is available. However, in many applications, the latent strengths of the items evolve over time. Indeed, a user preference changes with time in  recommendation systems, as does the level of a player or a team in sports tournaments. This dynamic setting has received significantly less attention than its static counterpart and has mostly been studied with a focus on applications, such as sports tournaments \citep{cattelan2013dynamic,jabin2015continuous,motegi2012network}. It has rarely been analysed theoretically in the past however, although some results exist for a state-space generalization of the BTL model \citep{fahrmeir1994dynamic,glickman1998state} and for a Bayesian framework \citep{glickman1998state,lopez2018often,maystre2019pairwise}.

\paragraph{Dynamic BTL.} The focus of this paper is to consider a dynamic version of the classical BTL model for ranking. 
In this \emph{dynamic} scenario, 
we are given pairwise comparisons on a discrete set of time instants $\calT$, hence leading to a sequence of comparison graphs $(G_{t'})_{t' \in\calT}$. Moreover, denoting  $w_{t,i}^* > 0$ to be the strength of item $i$ at time $t$, the probability that item $j$ beats item $i$ at time $t' \in \calT$ is now given by $\frac{w^*_{t',j}}{w^*_{t',i}+w^*_{t',j}}$. This model is described in detail in Section \ref{sec:dynam_btl}. The goal is to recover the underlying ranking at any given time instant $t$, where $t$ does not necessarily belong to $\calT$.  Furthermore, we make the following remarks.
\begin{itemize}
    \item Suppose $t$ lies in $\calT$, then we could consider only using the pairwise comparison data corresponding to $G_{t}$, along with a suitable algorithm for the static case. However, $G_{t}$ is not necessarily going to be connected; indeed, all the comparison graphs can now be disconnected and be very sparse. Hence this approach will fail in such scenarios. This suggests that it is in some sense crucial that we utilize the pairwise comparison information across different time instants in $\calT$ in order to get meaningful rank recovery at time $t$. This approach can of course be relevant even if $G_{t'}$ was connected for all $t' \in \calT$.
    
    \item Very recently, a couple of theoretical results were obtained for the dynamic ranking problem. Li and Wakin \citep{li2021recovery} include in their model the observed difference of scores (for example the score of a football game) which clearly contains richer information in comparison to simple binary outcomes. Bong et al. \citep{bong2020nonparametric} considered an adaptation of the BTL model to the dynamic setting similar to what has been described above. They first combine pairwise comparison information by applying kernel smoothing to the entire dataset, and then use the MLE for the smoothed data to recover the ranks. We discuss these results in detail in Section \ref{sec:discussion}.  
\end{itemize}

\paragraph{Our contributions.} We provide in this paper the following contributions. 
\begin{itemize}
    \item We consider a model adapted to the dynamic ranking setting which involves using the BTL model at each time point, but assumes that the pairwise outcome probabilities are Lipschitz functions of time. In this setting, we propose an adaptation of the Rank Centrality method, namely Dynamic Rank Centrality (Algorithm \ref{alg:spectral_algo}) to recover the ranks at any given time $t$ by averaging the data belonging to a suitable neighborhood of $t$. 
    
    \item We provide a detailed theoretical analysis of the previous algorithm by deriving $\ell_2$ and $\ell_\infty$ error bounds for estimating the latent strength vector $w_t^* = (w^*_{t,1}, \dots,w^*_{t,n})^\top$ at any given time instant $t \in [0,1]$. In particular, assuming $\calT \subset [0,1]$ is a uniform grid ($\abs{\calT} = T+1$) and the comparison graphs $(G_{t'})_{t'\in\calT}$ are Erdös-Renyi, we show (see Corollaries \ref{cor:l2},\ref{cor:linf}) for $T$ large enough that the error rates are of the order $O(T^{-1/3})$ w.h.p. This result holds in very sparse regimes where each $G_{t'}$ could be potentially disconnected. Moreover, this rate matches the classical pointwise estimation error rate for Lipschitz functions.
    
    \item Finally, we perform extensive experiments on synthetic data which validates our theoretical findings. We also evaluate our method on a real data set and show that it performs well.
\end{itemize}

\paragraph{Outline of the paper.} After presenting the dynamic BTL model and our algorithm in Section \ref{sec:prob_setup}, we summarize our main theoretical results for the $\ell_2$ and $\ell_\infty$ estimation errors in Section \ref{sec:results}. We then describe in detail the $\ell_2$ analysis in Section \ref{sec:analysis_l2}, and the $\ell_\infty$ analysis in Section \ref{sec:analysis_linf}. Finally, we conduct experiments on both synthetic and real data in Section \ref{sec:experiments}, and discuss other closely related work in Section \ref{sec:discussion}.


\section{Problem setup and algorithm}\label{sec:prob_setup}

\paragraph{Notation}
For any probability vector $\pi \in \matR^n$ with strictly positive entries, we define the vector norm $\norm{x}_\pi = \sqrt{\sum_{i=1}^n \pi_i x_i^2}$. For a matrix $A$, the corresponding induced matrix norm is then defined as $\norm{A}_\pi = \sup_{\norm{x}_\pi=1} \norm{x^\top A}_\pi$. Note that some simple inequalities follow from these definitions.
\begin{equation}\label{eq:bound_pinorm}
    \sqrt{\pi_{\min}}\norm{x}_2 \leq \norm{x}_\pi \leq \sqrt{\pi_{\max}}\norm{x}_2 \quad \text{and} \quad \sqrt{\frac{\pi_{\min}}{\pi_{\max}}}\norm{A}_2 \leq \norm{A}_\pi \leq \sqrt{\frac{\pi_{\max}}{\pi_{\min}}}\norm{A}_2
\end{equation}
where $\norm{A}_2$ denotes the spectral norm, i.e., largest singular value, of $A$. The Frobenius norm of $A$ is denoted by $\norm{A}_F$. 
If $A$ is a $n \times n$ matrix with real eigenvalues, we order them as $\lambda_n(A) \leq \cdots \leq \lambda_1(A)$. Positive (absolute) constants are denoted by $c, C, \tilde{C}$ etc. with suitable choice of indices. For $a,b \geq 0$, we write $a \lesssim b$ if there exists a constant $C > 0$ such that $a \leq C b$. Moreover, we write $a\asymp b$ if $a \lesssim b$ and $b \lesssim a$.

%
\subsection{BTL model in a dynamic setting}\label{sec:dynam_btl}

Let us formally introduce our model for dynamic pairwise comparisons, inspired by the Bradley-Terry-Luce (BTL) model. We consider a set of items $[n] = \set{1,2,\dots,n}$, with a certain quality at each time $t \in [0,1]$, represented by the weight vector $w_t^* = (w^*_{t,1}, \dots,w^*_{t,n})^\top \in \matR^n$ with $w^*_{t,i} > 0$ for each $i \in [n]$. 

Our data consists of pairwise comparisons on this set of items at times $t' \in \calT = \set{\frac{i}{T} | \, i=0,\dots,T}$. The outcomes at each \rev{$t' \in \calT$} are gathered into an undirected comparison graph $G_{t'} = ([n],\calE_{t'})$ where $\calE_{t'}$ is the set of edges. 
While the set of items $[n]$ is supposed to be the same throughout, the compared items, i.e. the set of edges $\calE_{t'}$ can change with time.

To model such data, we use the BTL model at each time \rev{$t' \in \calT$}. This model posits that the probability that an item $j$ wins over an item $i$ is proportional to its strength. At each \rev{$t' \in \calT$}, for each pair of compared items $\set{i,j} \in \calE_{t'}$, we perform $L$ independent comparisons. Their outcomes are independent Bernoulli variables, defined for $l \in \set{1,\dots,L}$ by $y_{ij}^{(l)}(t')$ where
$$\prob(y_{ij}^{(l)}(t') = 1) = \frac{w^*_{t',j}}{w^*_{t',i}+w^*_{t',j}}.$$
The proportion of times $j$ won over $i$ at time $t'$ is given by $y_{ij}(t') = \frac{1}{L}\displaystyle\sum_{l=1}^L y_{ij}^{(l)}(t')$ and the corresponding true proportion is denoted by (\rev{for any $t \in [0,1]$})
\begin{equation*}
y^*_{ij}(t) := \expec [y_{ij}(t)] = \frac{w^*_{t,j}}{w^*_{t,i}+w^*_{t,j}} \quad \forall i\neq j.
\end{equation*}

\paragraph{Smooth evolution of pairwise outcomes.} Our goal is to recover $w^*_t$ at any time $t \in [0,1]$. Suppose for the moment that $t$ is on the grid, then if $G_t$ is connected, $w^*_t$ is identifiable up to a positive scaling. However in our dynamic setting $G_t$ can be very sparse, and is not necessarily connected. Therefore for meaningful recovery of $w_t^*$, we need to make additional assumptions on the evolution of the pairwise outcomes over time.  To this end, we make the following smoothness assumption.
\begin{assumption}[Lipschitz smoothness] \label{assump:smooth1}
There exists $M \geq 0$ such that
\begin{equation}
 |y^*_{ij}(t)-y^*_{ij}(t')| \leq \lip |t-t'| \quad \forall t,t' \in [0,1], \quad i\neq j \in [n].
\end{equation}
\end{assumption}
This assumption suggests that the pairwise outcomes at nearby time instants are similar, hence it is plausible that $w^*_t$ (at any $t \in [0,1]$) could be estimated by utilizing the data lying in a neighborhood of $t$. To formalize this intuition, let us define a neighborhood at any time $t$ by
\begin{equation} \label{eq:delta_neighbor}
\Nt := \set{t' \in \calT \ | \ \, |t-t'| \leq \frac{\delta}{T}}.
\end{equation}
where $\delta \in [0,T]$. Note that if $\delta < \frac{1}{2}$, then there exists some values of $t$ for which $\Nt$ is empty; hence we will consider $\delta \in [1/2, T]$. It is easy to verify that $\delta\leq \cardN\leq 4\delta$ for all $t\in[0,1]$.

\paragraph{Neighborhood graph.} For any time $t \in [0,1]$, the data contained in the neighborhood $\Nt$ can be gathered into a union graph, defined as $\Gu = ([n],\Eu)$ where $\Eu = \displaystyle\cup_{t' \in \Nt} \calE_{t'}$. The maximum and minimum degree of a vertex in $\Gu$ will be denoted by $\dumax$ and $\dumin$ respectively.
\rev{For each $i \neq j$}, it will also be useful to denote the the time instants in $\Nt$ where $i$ and $j$ are compared as 
\begin{equation*}
\calN_{ij,\delta}(t) = \set{t' \in \Nt | \, \set{i,j}\in\calE_{t'}},
\end{equation*}
along with the quantities
\begin{equation*}
    \Nmax = \max_{\set{i,j}\in\Eu} \cardNij \quad \text{and} \quad \Nmin = \min_{\set{i,j}\in\Eu} \cardNij.
\end{equation*}
\rev{Note that alternatively,
\[ \Eu = \set{\set{i,j} \, : \, i\neq j, \, \cardNij \geq 1}.\]}

\paragraph{General recovery idea.} 
Given the above setup, a general idea for recovering $w^*_t$ is to first form the union graph $\Gu$, and to then compute the statistics
\begin{equation} \label{eq:stat_average_neigh}
    \bar{y}_{ij}(t) := \frac{1}{\cardNij}\displaystyle\sum_{t' \in \Nij} y_{ij}(t') \quad \mbox{for} \quad \set{i,j} \in \Eu.
\end{equation}
\rev{Suppose for convenience that $L \rightarrow \infty$}, we have for each $\set{i,j} \in \Eu$ that
\begin{equation*}
    \bar{y}_{ij}(t) \xrightarrow{L \rightarrow \infty} \bar{y}_{ij}^*(t) = \frac{1}{\cardNij}\displaystyle\sum_{t' \in \Nij} y^*_{ij}(t').
\end{equation*}
Due to Assumption \ref{assump:smooth1} we know that 
\begin{equation*}
  \abs{\bar{y}_{ij}^*(t) - y_{ij}^*(t)}  \leq M\delta/T, \quad \mbox { for }  \set{i,j} \in \Eu, 
\end{equation*}
hence if $\delta = o(T)$, then for each $\set{i,j} \in \Eu$, we have that  $\bar{y}_{ij}^*(t)$ converges to $y_{ij}^*(t)$ as $T \rightarrow \infty$. Moreover, if the corresponding sequence of graphs $\Gu$ is connected, the identifiability of $w^*_t$ (up to a positive scaling) is ensured. The connectivity requirement on $\Gu$ is of course much weaker than requiring the individual graph(s) to be connected, which is also a key difference between the static and dynamic settings.
Also note that the computed statistcs $\bar{y}_{ij}(t)$ are nearest neighbor estimators as they average the data over a suitable neighborhood of $t$. More generally, one could also compute $\bar{y}_{ij}(t)$ via kernel smoothing as considered in \citep{bong2020nonparametric}. \rev{As will be shown later, even if $L=1$, consistent recovery of $w^*_t$ is possible provided that $\delta = o(T)$, $\delta \overset{T \to \infty}{\longrightarrow} \infty$ and that every $\set{i,j}\in\Eu$ satisfies $\cardNij\overset{T \to \infty}{\longrightarrow} \infty$.}
%

With the above discussion in mind, a general scheme for recovering $w^*_t$ for any given $t \in [0,1]$ would be to first form $\Gu$ for a suitable choice of the parameter $\delta$, then compute the statistics $\bar{y}_{ij}(t)$ as in \eqref{eq:stat_average_neigh}, and finally apply any existing method for the static case using the comparison graph $\Gu$ and the data $(\bar{y}_{ij}(t))_{\set{i,j} \in \Eu}$. We will focus on the Rank Centrality algorithm of Negahban et al. \citep{negahban2015rank} -- a popular spectral algorithm known to achieve state of the art performance -- and adapt it to our dynamic setting.

\subsection{Spectral dynamic ranking}\label{sec:spec_setup}
The Rank Centrality \citep{negahban2015rank} method is based on the connection between pairwise comparisons and a random walk on a directed graph. In the static case, a faithful estimation of the weight vector is given by the stationary distribution of the Markov chain induced by a suitably constructed transition matrix. In the dynamic setting, this method can be adapted for estimating $w^*_t$ by now constructing the transition matrix using $\Gu$ and the data $(\bar{y}_{ij}(t))_{\set{i,j} \in \Eu}$, as in \eqref{eq:stat_average_neigh}. 

More precisely, we define a transition matrix $\Pest$ on this graph using $(\bar{y}_{ij}(t))_{\set{i,j} \in \Eu}$, with  
%
\begin{equation}\label{eq:spec_mat_pest}
\Pestij = \left\{\begin{array}{cc}
	\frac{\bar{y}_{ij}(t)}{\du} = \frac{1}{\du} \left(\frac{1}{\cardNij}\displaystyle\sum_{t' \in \Nij} y_{ij}(t') \right) & \mbox{if} \quad \set{i,j} \in \Eu \\
	1-\frac{1}{\du} \left(\displaystyle\sum_{k \neq i} \frac{1}{\cardNik}\displaystyle\sum_{t' \in \Nik} y_{ik}(t') \right) & \mbox{if} \quad i=j\\
	0 & \mbox{otherwise}
\end{array}
\right.
\end{equation}
%
%
where $\du \geq \dumax$ is a suitably chosen normalization term. Then, one can easily verify that at each time $t$, $\Pest$ is a transition matrix ($\est{P}(t)$ is stochastic) corresponding to a Markov chain on a finite state space. Thus there always exists at least one stationary distribution associated to $\est{P}(t)$. Moreover, stochastic matrices admit 1 as leading eigenvalue and so a candidate as the stationary distribution is its leading left eigenvector, i.e.
\begin{equation*}
    \piest^\top = \piest^\top\Pest.
\end{equation*}

Besides, the vector of true weights $w^*_t$ we want to recover can be seen as the stationary distribution of a transition matrix on the union graph. Specifically, denoting $\pistar = \frac{w_t^*}{\sum_{i=1}^n w_{t,i}^*}$, one can easily show that $\pistar$ is the stationary distribution of the transition matrix
\begin{equation}\label{eq:spec_mat_pbar}
\Pbarij = \left\{\begin{array}{cc}
	\frac{1}{\du} \frac{w^*_{t,j}}{w^*_{t,i}+w^*_{t,j}} & \mbox{if} \quad \set{i,j} \in \Eu \\
	1-\frac{1}{\du}\displaystyle\sum_{k \neq i} \frac{w^*_{t,k}}{w^*_{t,i}+w^*_{t,k}} & \mbox{if} \quad i=j\\
	0 & \mbox{otherwise}
\end{array}
\right.
\end{equation}
since $\Pbar$ and $\pistar$ verify the detailed balance equation of reversibility \citep{levin2017markov}
\begin{equation*}
    \Pbarij\pistari = \Pbarji\pistarj \quad \forall i,j \in [n].
\end{equation*}
%
One can reasonably expect $\piest$ to be close to $\pistar$ as they are stationary distributions of $\Pest$ and $\Pbar$ respectively, the latter of which are expected to be close. Indeed, one has the following bias-variance trade-off
\begin{equation*}
    \Pest - \Pbar = \underset{\text{variance}}{\underbrace{\Pest - \expec[\Pest]}} + \underset{\text{bias}}{\underbrace{\expec[\Pest] - \Pbar}}.
\end{equation*} 
where the variance term is typically expected to decrease with $\delta$ (due to averaging over $\Nt$) while the bias term will scale as $O(\delta/T)$ (due to the smoothness assumption \ref{assump:smooth1}). Hence for a suitably chosen $\delta = o(T)$ we will then have (for $n, T$ large enough) $\Pest \approx \Pbar$, which implies $\piest \approx \pistar$.
%
%
%
\begin{remark}
For meaningful recovery the vector $\pistar$ clearly has to be unique. This is the case if the associated Markov chain is irreducible which in turn is ensured by the connectivity of the underlying graph (here, the union graph $\Gu$) and the strict positivity of the weights on its  edges \citep{levin2017markov}. The condition on the weights is guaranteed in our setup since $\Pbarij > 0$ for each $\set{i,j} \in \Eu$ (indeed, $w^*_{t,i} > 0$ for each $t \in [0,1]$ and $i \in [n]$).
\end{remark}
Based on the above discussion we can outline the steps of our method for ranking in the dynamic setting in the form of Algorithm \ref{alg:spectral_algo}.
\begin{algorithm}
    \caption{Spectral algorithm for dynamic ranking (Dynamic Rank Centrality)}\label{alg:spectral_algo}
    \begin{algorithmic}[1]
    \State {\bf Input:} Grid $\calT \subset [0,1]$, and a given time $t \in [0,1]$. For each $t' \in \calT$: comparison graph $G_{t'}$, results of comparisons as statistics $(y_{ij}(t'))_{\set{i,j} \in \calE_{t'}}$.  
    
    \State Form the neighbourhood graph $\Gu = ([n],\Eu)$ where $\Eu = \displaystyle\cup_{t' \in \Nt} \calE_{t'}$,  and $\Nt$ is as in \eqref{eq:delta_neighbor}.
    
    \State Compute the transition matrix $\Pest$ as in \eqref{eq:spec_mat_pest} with $\du \geq \dumax$. 
    
    \State Compute the leading left eigenvector $\piest$ of $\Pest$. 
    
    \State {\bf Output:} $\est{\pi}(t) \in \matR^n$.
    \end{algorithmic}
\end{algorithm}
Our goal now is to establish conditions under which $\piest$ is close to $\pistar$ under the $\ell_2$ and $\ell_\infty$ norms. These results are summarized in the next section. In particular, we will strive to establish consistency results (i.e., the error approaching zero) when the grid size $T \rightarrow \infty$.

\paragraph{\rev{Additional definitions.}} Before proceeding, we need to define some additional quantities, \,\rev{ some being} related to the union graph $\Gu$,  which will appear in the following sections. 
Let $\Ld = D_{\delta}^{-1}(t)A_{\delta}(t)$ denote the random walk Laplacian of $\Gu$, where $D_{\delta}(t)$ is the diagonal matrix of vertex degrees, and $A_{\delta}(t)$ is its adjacency matrix. We denote $\xid = 1-\lammax(\Ld)$ where 
$$\lammax(\Ld) := \max \set{\lambda_2(\Ld),-\lambda_n(\Ld)},$$
is the second largest eigenvalue (in absolute value) of $\Ld$. Note that $\Ld$ has real eigenvalues since it is similar to the symmetric Laplacian $D_{\delta}^{-1/2}(t) A_{\delta}(t) D_{\delta}^{-1/2}(t)$. 
\rev{Let us also denote
\begin{equation*}
b(t) := \max_{i,j \in [n]} \frac{w^*_{t,i}}{w^*_{t,j}} \mbox{ for all } t \in [0,1]
\end{equation*} 
where we will require that $b(t)$ is finite for each $t \in [0,1]$.}

%
%
\section{Main results}\label{sec:results}
In Section \ref{sec:results_l2}, we present bounds on the $\ell_2$ error $\norm{\piest - \pistar}_2$, while Section \ref{subsec:results_linfty} contains our bounds on the $\ell_{\infty}$ error $\norm{\piest - \pistar}_{\infty}$. \rev{A summary of the notation used in the paper is outlined in tabular form in Appendix \ref{sec:summary_notation}.}
\subsection{$\ell_2$ error bound} \label{sec:results_l2}
For a given sequence of graphs $(G(t'))_{t' \in \calT}$, the following theorem provides an explicit $\ell_2$ error bound (holding w.h.p) which in particular highlights the dependence on parameters related to the union graph $\Gu$, the grid size $T$ and the neighborhood size $\delta$. \rev{The proofs of results in this section are outlined in Section \ref{sec:analysis_l2}.}
\begin{theorem}\label{thm:l2}
For any given $t \in [0,1]$, suppose that $\delta \in [\frac{1}{2},T]$ is such that $n \geq c_1 \log n$ and $\xid > 0$ for some constant $c_1 > 0$. Choosing $\du \geq \dumax$, there exist constants $\tilde{C_1} \geq 15, \tilde{C_2} \geq 1$ such that if
\begin{equation}\label{eq:cond_thm1}
\tilde{C_1}\sqrt{\frac{\Nmax\dumax\log n}{L\dusq\Nminsq}}+ 4\frac{M\delta\cardEu}{T\dumax} \leq \frac{\xid\dumin}{8\du b^{7/2}(t)}, 
\end{equation}
then it holds with probability at least $1 - O(n^{-10})$ that
\begin{equation}\label{eq:thm1}
    \frac{\norm{\piest - \pistar}_2}{\norm{\pistar}_2} \leq 32\frac{M\delta\cardEu b^{7/2}(t)\du}{T\xid\dumin\dumax}+ 8\tilde{C_2}\frac{b^{9/2}(t)}{\xid\dumin}\sqrt{\frac{\Nmax\dumax}{L\Nminsq}}.
\end{equation}
\end{theorem}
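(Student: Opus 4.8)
The plan is to adapt the spectral-perturbation analysis of Rank Centrality \cite{negahban2015rank} to the dynamic transition matrices $\Pest$ and $\Pbar$. Both $\piest$ and $\pistar$ are stationary distributions of the row-stochastic matrices $\Pest$ and $\Pbar$ on the union graph $\Gu$, and the detailed balance relation of Section~\ref{sec:spec_setup} makes $\Pbar$ reversible with respect to $\pistar$; consequently $\Pbar$ contracts the mean-zero subspace $\set{x\in\matR^n : x^\top\mathbf{1}=0}$ with factor $\lammax(\Pbar)$ in the weighted norm $\norm{\cdot}_{\pistar}$ (which relates to the Euclidean and spectral norms through factors $\sqrt{\pistarmin},\sqrt{\pistarmax}$ as in \eqref{eq:bound_pinorm}). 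Running the power iteration $\pi_k^\top:=\pi_0^\top(\Pest)^k$ from an arbitrary initial distribution and setting $\Delta_k:=\pi_k-\pistar$ (which stays in that subspace since $\Pest$ is stochastic), one obtains the recursion $\Delta_{k+1}^\top=\Delta_k^\top\Pest+\pistar^\top(\Pest-\Pbar)$; splitting $\Delta_k^\top\Pest=\Delta_k^\top\Pbar+\Delta_k^\top(\Pest-\Pbar)$, applying the triangle inequality and the contraction of $\Pbar$, and letting $k\to\infty$ yields the master bound
\[
\norm{\piest-\pistar}_{\pistar}\;\le\;\frac{\norm{\pistar^\top(\Pest-\Pbar)}_{\pistar}}{1-\lammax(\Pbar)-\norm{\Pest-\Pbar}_{\pistar}},
\]
valid whenever the denominator is positive. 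Dividing by $\norm{\pistar}_{\pistar}$ and using \eqref{eq:bound_pinorm} together with $\pistarmax/\pistarmin\le b(t)$ to pass to $\ell_2$ norms, it then remains to (a) lower bound the spectral gap $1-\lammax(\Pbar)$, and (b) upper bound $\norm{\Pest-\Pbar}_2$ (appearing in the denominator) and $\norm{\pistar^\top(\Pest-\Pbar)}_{\pistar}$ (the numerator).

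For (a), I would compare the Dirichlet form of $\Pbar$ with that of the random-walk Laplacian $\Ld=D_{\delta}^{-1}(t)A_{\delta}(t)$ of $\Gu$, whose absolute spectral gap is exactly $\xid$ by definition. Since $\pistari\Pbarij=\frac{w^*_{t,i}w^*_{t,j}}{\du(w^*_{t,i}+w^*_{t,j})\sum_k w^*_{t,k}}$ is symmetric in $i,j$, while the corresponding symmetric quantity for $\Ld$ is proportional to $(A_{\delta}(t))_{ij}$, on each edge of $\Gu$ the ratio of these two quantities -- and likewise the ratio of the two stationary distributions ($\pistar$ versus the degree distribution) -- is controlled by $b(t)$ and by $\dumax/\dumin$. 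A standard comparison-of-Dirichlet-forms argument (in the spirit of \cite{negahban2015rank}) then gives $1-\lambda_2(\Pbar)\gtrsim\frac{\dumin}{\du}\,\frac{\xid}{b(t)^{c}}$ for a suitable constant $c$, and the same argument (applied to $-\Ld$, or via a laziness/parity argument) controls $1+\lambda_n(\Pbar)$, so that $1-\lammax(\Pbar)\gtrsim\frac{\dumin\,\xid}{\du\,b(t)^{c}}$. This is the step that generates the powers of $b(t)$ in the statement, and I expect it to be the main technical obstacle -- in particular one must handle $-\lambda_n(\Pbar)$ and not merely $\lambda_2(\Pbar)$.

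For (b), decompose $\Pest-\Pbar=(\Pest-\expec\Pest)+(\expec\Pest-\Pbar)$ into a variance and a bias part. By Assumption~\ref{assump:smooth1}, $\abs{\bar y_{ij}^*(t)-y^*_{ij}(t)}\le M\delta/T$, so the bias matrix $\expec\Pest-\Pbar$ has off-diagonal entries bounded by $\frac{M\delta}{T\du}$ on its $\cardEu$ edges and diagonal entries bounded by $\frac{\dumax M\delta}{T\du}$; bounding its spectral norm by its Frobenius norm and simplifying with $\du\ge\dumax$ and the relations among $\cardEu,\dumin,\dumax$ gives $\norm{\expec\Pest-\Pbar}_2\lesssim\frac{M\delta\cardEu}{T\dumax}$. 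For the variance part, conditionally on $\Gu$ the matrix $\Pest-\expec\Pest$ has entries that are independent across the edges of $\Gu$, each an average of $L\cdot\cardNij\ge L\Nmin$ i.i.d.\ Bernoulli$(y^*_{ij}(t'))$ variables rescaled by $1/\du$; a matrix-Bernstein / random-matrix spectral-norm bound then yields $\norm{\Pest-\expec\Pest}_2\lesssim\sqrt{\frac{\Nmax\dumax\log n}{L\dusq\Nminsq}}$ with probability $1-O(n^{-10})$, the hypothesis $n\ge c_1\log n$ being used to absorb a lower-order additive term. For the numerator $\norm{\pistar^\top(\Pest-\expec\Pest)}_{\pistar}$ one can instead concentrate the vector $\pistar^\top(\Pest-\expec\Pest)$ directly, which avoids part of the logarithmic loss and produces the $\sqrt{\Nmax\dumax/(L\Nminsq)}$-type term.

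Finally, condition \eqref{eq:cond_thm1} is precisely what guarantees (on the above high-probability event) that $\norm{\Pest-\Pbar}_{\pistar}\le\tfrac12\bigl(1-\lammax(\Pbar)\bigr)$, so the denominator of the master bound is positive and in fact bounded below by $\tfrac12\bigl(1-\lammax(\Pbar)\bigr)\gtrsim\frac{\dumin\xid}{\du\,b(t)^{c}}$ -- a deterministic quantity. Plugging the bias and variance bounds from (b) into the numerator, dividing by $\norm{\pistar}_{\pistar}$, and converting back to $\ell_2$ via \eqref{eq:bound_pinorm} yields two terms: a bias contribution of order $\frac{M\delta\cardEu\,\du}{T\xid\dumin\dumax}$ and a variance contribution of order $\frac{1}{\xid\dumin}\sqrt{\frac{\Nmax\dumax}{L\Nminsq}}$, each multiplied by a suitable power of $b(t)$; after careful bookkeeping of the constants and of the exact exponents of $b(t)$ this is exactly \eqref{eq:thm1}. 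The $1-O(n^{-10})$ probability comes solely from the matrix concentration step in (b), since the sequence of graphs -- hence $\Gu$, $\dumin$, $\dumax$, $\cardEu$, $\Nmin$, $\Nmax$, $\xid$ -- is fixed throughout this theorem.
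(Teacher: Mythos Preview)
Your proposal is correct and follows essentially the same route as the paper: the master perturbation bound you derive by power iteration is exactly \cite[Theorem~8]{Chen_2019} which the paper cites directly, the spectral-gap comparison you sketch is the content of the paper's Lemma~\ref{lem3} (via \cite[Lemma~6]{negahban2015rank}, yielding $1-\lammax(\Pbar)\ge \frac{\xid\dumin}{4\du b^3(t)}$), the bias/variance decomposition and their bounds are Lemmas~\ref{lem:bounds_deterministic} and~\ref{lem:bound_delta}, and the sharper numerator bound is Lemma~\ref{lem:5} (obtained via the Hanson--Wright inequality). The only minor slip is the attribution of the hypothesis $n\ge c_1\log n$: in the paper it is used in the Hanson--Wright step for $\norm{\pistar^\top\Delta(t)}_2$ (Lemma~\ref{lem:5}), not in the matrix-Bernstein bound for $\norm{\Delta(t)}_2$.
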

Let us make the following observations.
\begin{enumerate}
\item The first term in the RHS of \eqref{eq:thm1} corresponds to the bias and arises from the regularity assumption \ref{assump:smooth1}, while the second term therein is the variance term. Moreover, note that the error depends on $\delta$ -- either explicitly, or through certain quantities such as $\dumax, \Nmin$ etc. In order to obtain a more explicit dependence in terms of $\delta$, we will need to make specific  assumptions on the graphs $G_{t'}$, $t' \in \calT$. Below, we will consider the setting where the graphs are Erd\"os-Renyi graphs and derive explicit conditions on $\delta$ that lead to consistency with respect to $T$.

\item \rev{The condition \eqref{eq:cond_thm1} arises from the eigenvector perturbation result in \cite[Theorem 8]{Chen_2019} which requires the noise term (i.e., $\Pest - \Pbar$) to be small compared to the spectral gap of $\Pbar$ (i.e., $1-\lammax(\Pbar)$), see \eqref{eq:condition_thm8}.}

\item In the static case, we have $t=t'$ for some $t' \in \calT$ and only the graph $G_{t'}$ is observed. Then $M = 0$ and $\delta = 1/2$, so $\Nmin, \Nmax \equiv 1$. Denoting $d_{\min}(t), d_{\max}(t), \xi(t)$ to be the corresponding quantities with the $\delta$ suffix suppressed,  condition \eqref{eq:cond_thm1} is satisfied for $L$ large enough. Moreover, the error bound is then $O(\frac{b^{9/2}(t)}{\xi(t) d_{\min}(t)}\sqrt{\frac{d_{\max}(t)}{L}})$ which matches the $\ell_2$ bound of Negahban et al. \citep[Theorem 1]{negahban2015rank} with the $\sqrt{\log n}$ factor therein removed, but with an extra $b^2(t)$ factor. 
%
\end{enumerate}
\begin{remark} \label{rem:diff_Lij}
    \rev{The term $\Nmax$ is admittedly counterintuitive and is an artifact of the proof technique. In particular, this occurs due to certain concentration inequalities used within the proof (e.g., Lemma \ref{lem:bound_delta}). To our knowledge, similar issues would arise in the static case for analyzing the Rank Centrality method if each comparison $\set{i,j}$ was made $L_{ij}$ times. This would occur, for instance, in the proof of \cite[Lemma 3]{negahban2015rank} due to Hoeffding's inequality, in which case both the minimum and maximum of the $L_{ij}$'s would appear in the bounds.} 
\end{remark}
Now we consider the important case where the comparison graphs are Erdös-Renyi graphs, i.e., $G_{t'} = \calG(n,p(t'))$ for each $t' \in \calT$. It is easily seen that the union graph $\Gu$ is also then Erdös-Renyi denoted by $\calG(n,\pu)$ where $\pu$ is given by
\begin{equation} \label{eq:pdel_def}
    \pu = 1-\prod_{t' \in \Nt} (1-p(t')).
\end{equation}
%
%
In this setting, the bound in Theorem \ref{thm:l2} can be simplfied using concentration results for parameters related to $\Gu$ (see Lemma \ref{lem:prop_gnp}). Specifically, we have that if $\pu \gtrsim \log n/n$, then w.h.p
\begin{equation*}
\frac{n\pu}{2} \leq \dumax, \dumin \leq \frac{3n\pu}{2}; \quad \xid \geq \frac{1}{2}; \quad \abs{\Eu} \leq 2 n^2 \pu.    
\end{equation*}
In particular, we will choose the normalization factor $\du = 3n\pu$ which is a valid choice (w.h.p). Lemma \ref{lem:prop_gnp} also states that if $\psum := \sum_{t'\in\Nt}p(t') \gtrsim \log n$, then  $\Nmax, \Nmin \asymp \psum$ w.h.p. These considerations lead to the following simplification of Theorem \ref{thm:l2}.
\begin{theorem}\label{thm:l2_gnp}
Suppose that $G_{t'} \sim \calG(n,p(t'))$ for all $t' \in \calT$ so that $\Gu \sim \calG(n,\pu)$ (for any given $t \in [0,1]$) with $\pu$ as in \eqref{eq:pdel_def}, and denote $\psum := \sum_{t'\in\Nt}p(t')$. Choosing $\du = 3 n\pu$, let $\delta \in [\frac{1}{2},T]$ be such that $n \geq c_1 \log n$, $n\pu \geq c_0\log n$, and $\psum \geq c_2 \log n$ with constant $c_1 > 0$ as in Theorem \ref{thm:l2}, and constants $c_0, c_2 \geq 1$. Then for constants $\tilde{C_1}, \tilde{C_2}$ as in Theorem \ref{thm:l2}, if
\begin{equation}\label{eq:cond_thm_gnp}
    2\tilde{C_1}\sqrt{\frac{\log n}{Ln\pu\psum}}+16\frac{M\delta n}{T}\leq \frac{1}{96b^{7/2}(t)} 
\end{equation}
holds, we have with probability at least $1 - O(n^{-10})$ that
\begin{align*}
    \frac{\norm{\piest - \pistar}_2}{\norm{\pistar}_2} & \leq 1536\frac{M\delta nb^{7/2}(t)}{T} + 64\tilde{C_2}b^{9/2}(t)\sqrt{\frac{3}{Ln\pu\psum}}.
\end{align*}
\end{theorem}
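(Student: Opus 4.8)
The plan is to derive Theorem~\ref{thm:l2_gnp} as a direct specialization of Theorem~\ref{thm:l2}, substituting the Erd\"os-Renyi concentration estimates for the graph parameters appearing in \eqref{eq:cond_thm1} and \eqref{eq:thm1}. First I would invoke Lemma~\ref{lem:prop_gnp} to obtain, on an event of probability at least $1 - O(n^{-10})$, the bounds $\frac{n\pu}{2} \le \dumin \le \dumax \le \frac{3n\pu}{2}$, $\xid \ge \frac12$, $\cardEu \le 2n^2\pu$, and $\Nmin \asymp \Nmax \asymp \psum$ (using the hypotheses $n\pu \ge c_0\log n$ and $\psum \ge c_2\log n$ to guarantee these hold). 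All subsequent estimates are carried out on this event, intersected with the event from Theorem~\ref{thm:l2}, which still has probability $1 - O(n^{-10})$ by a union bound. Note also that $\du = 3n\pu \ge \dumax$ w.h.p., so this is a valid choice of normalization in Algorithm~\ref{alg:spectral_algo}.

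Next I would check that condition \eqref{eq:cond_thm_gnp} implies condition \eqref{eq:cond_thm1}. For the variance term: $\sqrt{\frac{\Nmax\dumax\log n}{L\dusq\Nminsq}} \asymp \sqrt{\frac{\psum \cdot n\pu \cdot \log n}{L\,(n\pu)^2\,\psum^2}} = \sqrt{\frac{\log n}{Ln\pu\psum}}$ up to absolute constants, while the RHS of \eqref{eq:cond_thm1} is $\frac{\xid\dumin}{8\du b^{7/2}(t)} \asymp \frac{(1/2)(n\pu/2)}{8\cdot 3n\pu \cdot b^{7/2}(t)} \asymp \frac{1}{b^{7/2}(t)}$. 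For the bias term: $\frac{M\delta\cardEu}{T\dumax} \lesssim \frac{M\delta n^2\pu}{T n\pu} = \frac{M\delta n}{T}$. Matching these up with the explicit constants $\tilde C_1, 2\tilde C_1$ and the factor $16$ in \eqref{eq:cond_thm_gnp} — and with the $96$ and $b^{7/2}(t)$ on the right — is a matter of tracking the numerical constants through the $\asymp$ relations from Lemma~\ref{lem:prop_gnp}; I would be slightly generous with constants to absorb the $\Nmax/\Nmin$ ratio, which Lemma~\ref{lem:prop_gnp} controls by an absolute constant. The analogous substitution in \eqref{eq:thm1} gives: the bias term $32\frac{M\delta\cardEu b^{7/2}(t)\du}{T\xid\dumin\dumax} \lesssim \frac{M\delta \cdot n^2\pu \cdot b^{7/2}(t) \cdot n\pu}{T \cdot 1 \cdot n\pu \cdot n\pu} \asymp \frac{M\delta n b^{7/2}(t)}{T}$, and the variance term $8\tilde C_2\frac{b^{9/2}(t)}{\xid\dumin}\sqrt{\frac{\Nmax\dumax}{L\Nminsq}} \lesssim \tilde C_2 b^{9/2}(t) \cdot \frac{1}{n\pu}\sqrt{\frac{\psum \cdot n\pu}{L\psum^2}} = \tilde C_2 b^{9/2}(t)\sqrt{\frac{1}{Ln\pu\psum}}$, matching the stated $64\tilde C_2 b^{9/2}(t)\sqrt{\frac{3}{Ln\pu\psum}}$ after pinning down constants.

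The main obstacle is bookkeeping rather than any genuine mathematical difficulty: one must verify that the chain of $\asymp$ estimates from Lemma~\ref{lem:prop_gnp} produces constants no worse than the (deliberately loose) ones $1536$, $64$, $16$, $96$ written in the statement, and in particular that the lower bound $\dumin \ge \frac{n\pu}{2}$ and $\xid \ge \frac12$ are strong enough that nothing blows up — the only place where a lower bound on $\dumin$ or $\xid$ is needed in \eqref{eq:thm1} is in the denominators, so these hold comfortably. I would organize the write-up as: (i) state the high-probability event from Lemma~\ref{lem:prop_gnp} and union-bound it with Theorem~\ref{thm:l2}'s event; (ii) verify \eqref{eq:cond_thm_gnp} $\Rightarrow$ \eqref{eq:cond_thm1}; (iii) substitute into \eqref{eq:thm1} and simplify. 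One subtlety worth a sentence: the claim $\Nmin, \Nmax \asymp \psum$ requires $\psum \gtrsim \log n$, which is exactly the hypothesis $\psum \ge c_2\log n$; and the claim $\pu \gtrsim \log n/n$ needed for the degree/spectral-gap concentration follows since $\pu \ge \max_{t' \in \Nt} p(t') $ is not automatic, so instead one uses $n\pu \ge c_0\log n$ directly as a hypothesis, which is how it is stated.
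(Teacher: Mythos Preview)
Your proposal is correct and follows essentially the same approach as the paper: invoke Lemma~\ref{lem:prop_gnp} on a high-probability event, verify that \eqref{eq:cond_thm_gnp} together with the concentration bounds implies \eqref{eq:cond_thm1}, and then substitute the same bounds into \eqref{eq:thm1} to obtain the simplified conclusion. The paper's write-up is exactly this three-step reduction, with the same constants emerging from $\du=3n\pu$, $\dumin\geq n\pu/2$, $\xid\geq 1/2$, $\cardEu\leq 2n^2\pu$, and $\Nmin,\Nmax\asymp\psum$.
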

%
The following remarks are in order.
\begin{enumerate}
\item As can be seen, the bias term is $O(\frac{n\delta}{T})$ while the variance term scales as $O(\frac{1}{\sqrt{L n \pu \psum}})$. Hence if $\psum$ grows with $\delta$, then the variance error will reduce as  $\delta$ increases. \rev{Furthermore, if $\delta = o(T)$ and $\delta$ increases with $T$ then it would imply that the LHS of \eqref{eq:cond_thm_gnp} decreases with $T$, and hence \eqref{eq:cond_thm_gnp} will be satisfied for $T$ sufficiently large.}

\item In the static case we observe a single comparison graph $G_{t'}$ (for $t' \in \calT$) with $t = t'$. Thus $\psum \equiv 1$ and the condition $\psum \gtrsim \log n$ is not needed, while $\pu = p(t)$. Hence, if $p(t) \gtrsim \frac{\log n}{n}$ and $L$ is suitably large, the $\ell_2$ error is bounded by $O(\frac{1}{\sqrt{Lnp(t)}})$, which corresponds to the bound obtained by Chen et al. \citep[Theorem 9]{Chen_2019}. So our result is coherent with existing results for the static case for Erd\"os-Renyi graphs.

\item The choice $\du = 3n\pu$ ensures that $\dumax \leq \du$ w.h.p. In fact, we could have chosen $\du$ to be a constant ($\geq 1$) multiple of $\dumax$ as well. However for the $\ell_{\infty}$ analysis later on, it will be crucial to choose $\du$ as a constant times $n\pu$ for technical reasons arising in the analysis. Similar considerations for the choice of the normalization factor exist in the static setting as well (see \citep{Chen_2019, chen2020partial}). Note that this choice of $\du$ requires us to know $\pu$, but in case we don't know $\pu$ in practice, we can instead use its empirical estimate which can be easily computed.
%
%
\end{enumerate}

We now derive an appropriate choice for $\delta$ that leads to an $\ell_2$ error rate of $O(T^{-1/3})$. To this end, we first need to explicitly show the dependence on $\delta$ for $\pu, \psum$. 
Let us assume for simplicity that 
\begin{equation} \label{eq:pmin_def}
\pmin := \min_{t' \in \calT} p(t') > 0. 
\end{equation}
Since $\delta \leq \cardN \leq 4\delta$, we have for all $t \in [0,1]$ that $\psum \geq \delta\pmin$. Besides, as shown in Proposition \ref{prop:pu}, 
$\pu \gtrsim \min\set{1,\delta\pmin}$. Hence if $\delta\pmin \gtrsim \log n$ then it implies 
$$\psum \gtrsim \log n \quad \text{and} \quad \pu \gtrsim 1  \ (\geq \log n/n)$$ 
meaning that the conditions on $\pu$ and $\psum$ in Theorem \ref{thm:l2_gnp} are satisfied. 
\begin{remark}\label{rem:cond_psum}
\rev{The condition $\psum \gtrsim \log n$ is needed to ensure that $\Nmin, \Nmax$ concentrate around $\psum$, as shown in Lemma \ref{lem:prop_gnp}. This is in fact a strong condition as requiring $\delta\pmin \gtrsim \log n$ imposes that the union graph $\Gu$ is complete (see proof of Lemma \ref{lem:prop_gnp}). We will show in Section \ref{sec:gutil} how to weaken this assumption.}
\end{remark}
\begin{corollary}\label{cor:l2}
Under the same notations as in Theorem \ref{thm:l2_gnp}, for all $t \in [0,1]$ suppose that $n \gtrsim \log n$ and $\pmin$ is as in \eqref{eq:pmin_def}. Choosing $\delta = \min\set{\frac{(b(t))^{\frac{2}{3}}}{(2M)^{\frac{2}{3}}n(L\pmin)^{\frac{1}{3}}} T^{2/3}, T}$ and $\du = 3n\pu$, if $T$ is such that $\delta \gtrsim \frac{\log n}{\pmin}$ and
\begin{align*}
\sqrt{\frac{\log n}{Ln \pmin}} \max \set{\frac{(2M)^{1/3} n^{1/2} (L \pmin)^{1/6}}{b^{1/3}(t) T^{1/3}}, \frac{1}{\sqrt{T}}} 
+ Mn \min \set{\frac{b^{2/3}(t)}{(2M)^{2/3} n (L \pmin)^{1/3} T^{1/3}}, 1} \lesssim \frac{1}{b^{7/2}(t)}, 
\end{align*}
then with probability at least $1-O(n^{-10})$, 
\begin{align*}
\frac{\norm{\piest-\pistar}_2}{\norm{\pistar}_2} &\lesssim Mn b^{7/2}(t) \min \set{\frac{b^{2/3}(t)}{(2M)^{2/3} n (L\pmin)^{1/3} T^{1/3}}, 1} \\ 
&+ \frac{b^{9/2}(t)}{\sqrt{Ln\pmin}} \max\set{\frac{(2M)^{1/3} n^{1/2} (L\pmin)^{1/6}}{b^{1/3}(t) T^{1/3}}, \frac{1}{\sqrt{T}}}.
\end{align*}
\end{corollary}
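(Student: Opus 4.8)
The plan is to obtain Corollary~\ref{cor:l2} as a direct specialization of Theorem~\ref{thm:l2_gnp} to the prescribed choice $\delta=\min\set{\frac{b^{2/3}(t)}{(2M)^{2/3}n(L\pmin)^{1/3}}T^{2/3},\,T}$; the only real work is to re-express the quantities $\pu$ and $\psum$ appearing in that theorem in terms of $\delta$ and $\pmin$, and then substitute. Throughout we work on the high-probability event of Lemma~\ref{lem:prop_gnp}.

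First I would verify that the hypotheses of Theorem~\ref{thm:l2_gnp} hold. Since $\delta\leq\cardN\leq 4\delta$ and each $p(t')\geq\pmin$, we have $\psum\geq\cardN\pmin\geq\delta\pmin$, and by Proposition~\ref{prop:pu} we have $\pu\gtrsim\min\set{1,\delta\pmin}$. Choosing the absolute constant implicit in the assumption $\delta\gtrsim\frac{\log n}{\pmin}$ large enough gives $\delta\pmin\geq c_2\log n\geq 1$, hence $\psum\geq\delta\pmin\geq c_2\log n$ and $\pu\asymp 1$; in particular $n\pu\gtrsim n\gtrsim\log n$, so that (after also choosing the constant in $n\gtrsim\log n$ large enough) $n\pu\geq c_0\log n$ and $n\geq c_1\log n$. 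Lemma~\ref{lem:prop_gnp} then also yields $\frac{n\pu}{2}\leq\dumin,\dumax\leq\frac{3n\pu}{2}$, $\xid\geq\frac{1}{2}>0$, $\cardEu\leq 2n^2\pu$ and $\Nmax,\Nmin\asymp\psum$, and it validates the choice $\du=3n\pu\geq\dumax$. Thus Theorem~\ref{thm:l2_gnp} is applicable.

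Next I would substitute $\pu\asymp 1$ and $\psum\geq\delta\pmin$ together with the explicit value of $\delta$ into the statement of Theorem~\ref{thm:l2_gnp}. Writing $\frac{1}{\delta}=\max\set{\frac{(2M)^{2/3}n(L\pmin)^{1/3}}{b^{2/3}(t)T^{2/3}},\,\frac{1}{T}}$, one obtains $\sqrt{\frac{\log n}{Ln\pu\psum}}\lesssim\left(\sqrt{\frac{\log n}{Ln\pmin}}\right)\max\set{\frac{(2M)^{1/3}n^{1/2}(L\pmin)^{1/6}}{b^{1/3}(t)T^{1/3}},\,\frac{1}{\sqrt{T}}}$, while by definition of $\delta$ one has $\frac{M\delta n}{T}=Mn\min\set{\frac{b^{2/3}(t)}{(2M)^{2/3}n(L\pmin)^{1/3}T^{1/3}},\,1}$. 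Hence, up to absorbing absolute constants, condition~\eqref{eq:cond_thm_gnp} is exactly the inequality assumed in the statement of the corollary, and it is therefore implied by that assumption. Plugging the same bounds into the conclusion of Theorem~\ref{thm:l2_gnp}: the bias term $1536\frac{M\delta nb^{7/2}(t)}{T}$ equals $1536\,b^{7/2}(t)\,Mn\min\set{\frac{b^{2/3}(t)}{(2M)^{2/3}n(L\pmin)^{1/3}T^{1/3}},\,1}$, and the variance term $64\tilde{C_2}\,b^{9/2}(t)\sqrt{\frac{3}{Ln\pu\psum}}$ is at most a constant times $b^{9/2}(t)\left(\sqrt{\frac{1}{Ln\pmin}}\right)\max\set{\frac{(2M)^{1/3}n^{1/2}(L\pmin)^{1/6}}{b^{1/3}(t)T^{1/3}},\,\frac{1}{\sqrt{T}}}$ by the same manipulation of $\frac{1}{\sqrt{\delta}}$. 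Summing the two contributions gives the claimed bound, valid with probability at least $1-O(n^{-10})$.

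The argument is essentially bookkeeping, so there is no genuine obstacle. The one point that requires thought rather than computation is the choice $\delta\asymp T^{2/3}$: it is precisely the exponent balancing the bias $O(\delta/T)$ against the variance $O(\delta^{-1/2})$ in Theorem~\ref{thm:l2_gnp}, and the clipping $\delta=\min\set{\cdot,\,T}$, together with the resulting $\min$ and $\max$ appearing in the statement, simply records the regime in which the unconstrained optimizer would exceed $T$ (so that the union neighborhood covers all of $\calT$). Beyond this, the care needed is to keep the exponents of $b(t),M,L,\pmin,n,T$ straight through the substitution and to choose the constants hidden in $n\gtrsim\log n$ and $\delta\gtrsim\frac{\log n}{\pmin}$ large enough (in terms of $c_0,c_1,c_2$) so that every hypothesis of Theorem~\ref{thm:l2_gnp} and every event of Lemma~\ref{lem:prop_gnp} holds simultaneously on one event of probability $1-O(n^{-10})$.
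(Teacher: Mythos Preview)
Your proposal is correct and follows essentially the same approach as the paper: verify the hypotheses of Theorem~\ref{thm:l2_gnp} using $\psum\geq\delta\pmin$ and Proposition~\ref{prop:pu} (so that $\delta\gtrsim\frac{\log n}{\pmin}$ forces $\psum\gtrsim\log n$ and $\pu\gtrsim 1$), then substitute the stated choice of $\delta$ into both the condition~\eqref{eq:cond_thm_gnp} and the conclusion of that theorem. Your explicit handling of the $\min/\max$ via $\frac{1}{\delta}=\max\{\cdot,\frac{1}{T}\}$ and your remark that $\delta\asymp T^{2/3}$ balances the $O(\delta/T)$ bias against the $O(\delta^{-1/2})$ variance are exactly what the paper does, only written out in slightly more detail.
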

The following observations are useful to note.
\begin{enumerate}
    \item When $M > 0$, Corollary \ref{cor:l2} states that for $\delta = \Theta(T^{2/3})$, if $n, T$ are large enough (\rev{thus ensuring that all the stated conditions are satisfied)}), then w.h.p $\norm{\piest-\pistar}_2 = O(T^{-1/3})$. This matches the rate for the pointwise risk for estimating univariate Lipschitz functions (see for e.g. \citep[Theorem 1.3.1]{nemirovski2000topics}).  
    
    \item If $M = 0$ then $\delta = T$ which makes sense since $y^{*}_{ij}(t)$ is a constant function for each $i \neq j$. Indeed, the problem is then the same as the setting where the comparison  graph is $\cup_{t' \in \calT} G_{t'}$, and we observe (a potentially different number of) i.i.d pairwise outcomes for each given edge in this graph. In this case, the corollary states that provided $T$ is large enough, the $\ell_2$ error is $\lesssim \frac{b^{9/2}(t)}{\sqrt{L n \pmin T}}$. This is logically faster than the $T^{-1/3}$ nonparametric rate, and is analogous to the optimal $\ell_2$ bound for Erd\"os Renyi graphs in the static setting (see \citep[Theorem 9]{Chen_2019}). 
    
\end{enumerate}


\subsection{$\ell_\infty$ error bound}
\label{subsec:results_linfty}
We now discuss our results for bounding the $\ell_{\infty}$ error $\norm{\piest - \pistar}_\infty$ at any given time $t$. Such bounds are particularly desirable in the context of ranking as they lead to guarantees for recovering the ranks of the items. We will assume that all the comparison graphs (at each $t' \in \calT$) are Erdös-Renyi graphs. The following theorem is the $\ell_{\infty}$ counterpart of Theorem \ref{thm:l2_gnp}, \rev{the proofs of results in this section are outlined in Section \ref{sec:analysis_linf}}. 
%
%
\begin{theorem}\label{thm:linf}
Under the notation and assumptions of Theorem \ref{thm:l2_gnp}, there exists a constant $\tilde{C_3} \geq  1$ such that if additionally  
\begin{equation}\label{eq:cond_thm3}
    96b^{\frac{5}{2}}(t) \left(\frac{4M\delta}{T}+\tilde{C_3}\sqrt{\frac{\log n}{n\pu}} \right) \leq \frac{1}{2}, 
\end{equation}
then there exist constants $\tilde{C_4},\tilde{C_5},\tilde{C_6} \geq 1$ such that with probability at least $1 - O(n^{-9})$, 
\begin{equation*}
    \frac{\norm{\piest - \pistar}_\infty}{\norm{\pistar}_\infty}\leq \frac{12\bmax}{1-\tilde{C_4}\bmax\sqrt{\frac{\log n}{n\pu}}} \left(\tilde{C_5} \gamma_{n,\delta}(t) \sqrt{\frac{\log n}{Ln\pu\psum}} + \tilde{C_6}\frac{Mn\delta b^{\frac{7}{2}}(t)}{T} \right),
\end{equation*}
where $\gamma_{n,\delta}(t) := (1+\frac{b^{\frac{5}{2}}(t)}{\sqrt{\log n}} \max\{b^2(t), \frac{\log n}{\sqrt{n\pu}}\})$ and \rev{$\bmax := \max_{t'\in\Nt} b(t')$.}
\end{theorem}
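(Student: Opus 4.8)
The argument is a leave-one-out analysis of the fixed-point equation defining $\piest$, in the spirit of the $\ell_\infty$ perturbation theory developed for Rank Centrality in the static setting \cite{Chen_2019,chen2020partial}, with the extra task of propagating the bias created by Assumption~\ref{assump:smooth1}. I would begin by subtracting the two stationarity identities $\piest^\top = \piest^\top\Pest$ and $\pistar^\top = \pistar^\top\Pbar$; adding and subtracting $\piest^\top\Pbar$ and using $(\piest-\pistar)^\top\mathbf 1 = 0$ together with $\Pbar\mathbf 1 = \mathbf 1$ yields the master identity
\begin{equation*}
(\piest-\pistar)^\top = (\piest-\pistar)^\top\bigl(\Pbar - \mathbf 1\pistar^\top\bigr) + \pistar^\top(\Pest-\Pbar) + (\piest-\pistar)^\top(\Pest-\Pbar).
\end{equation*}
I would then split $\Pest - \Pbar = (\Pest - \Peststar) + (\Peststar - \Pbar)$, where $\Peststar$ is the transition matrix built from the neighbourhood-averaged \emph{true} probabilities $\bar y^*_{ij}(t)$, i.e.\ $\Peststar = \expec[\Pest \mid \Gu]$: the second summand is a deterministic bias whose entries are $O(M\delta/(T\du))$ by Assumption~\ref{assump:smooth1}, while the first is, conditionally on $\Gu$, mean-zero.

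\noindent\textbf{The three terms, read coordinatewise.} Fix $m\in[n]$. (i) The \emph{contraction term} $[(\piest-\pistar)^\top(\Pbar - \mathbf 1\pistar^\top)]_m$ I would control by an $\ell_\infty$-type mixing estimate rather than by passing through $\ell_2$ (which is too lossy here): since $\Gu\sim\calG(n,\pu)$ is w.h.p.\ an expander, the rows of $(\Pbar)^2$ — and of the leave-one-out analogues below — are uniformly within relative error $O(\bmax\sqrt{\log n/(n\pu)})$ of $\pistar^\top$, so $\norm{(\piest-\pistar)^\top(\Pbar-\mathbf 1\pistar^\top)^2}_\infty \lesssim \bmax\sqrt{\log n/(n\pu)}\,\norm{\piest-\pistar}_\infty$; unrolling the master identity twice thus isolates a term proportional to $\norm{\piest-\pistar}_\infty$ on the right, which after rearranging produces the prefactor $(1-\tilde C_4\bmax\sqrt{\log n/(n\pu)})^{-1}$, positive thanks to \eqref{eq:cond_thm3}. (ii) The \emph{first-order stochastic term} $[\pistar^\top(\Pest-\Peststar)]_m = \sum_{i}\pistari(\Pest-\Peststar)_{im}$ is, conditionally on $\Gu$, a sum of independent bounded variables — one per neighbour $i$ of $m$ in $\Gu$, each of size $O(1/\du)$ and conditional variance $O(1/(\dusq L\cardNij))$ — so a Bernstein bound plus a union bound over $m$ gives, using $\dumin,\dumax\asymp n\pu$ and $\Nmin,\Nmax\asymp\psum$ from Lemma~\ref{lem:prop_gnp}, a contribution $\lesssim \norm{\pistar}_\infty\sqrt{\log n/(Ln\pu\psum)}$. (iii) The \emph{bias contributions} coming from $\Peststar - \Pbar$ in the last two summands are bounded crudely using $\abs{(\Peststar-\Pbar)_{ij}}\lesssim M\delta/(T\du)$, $\cardEu\lesssim n^2\pu$ and $\du = 3n\pu$, giving the $O(Mn\delta\, b^{7/2}(t)/T)$ term.

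\noindent\textbf{Decoupling the quadratic term.} The delicate summand is the fluctuation part of $[(\piest-\pistar)^\top(\Pest-\Peststar)]_m = \sum_i(\piesti-\pistari)(\Pest-\Peststar)_{im}$, because $\piesti$ is correlated with the comparisons between $i$ and $m$. I would introduce, for each $m$, the leave-one-out matrix $\Pmest$ obtained from $\Pest$ by replacing row $m$ and column $m$ (and the diagonal, to remain stochastic) by their $\Peststar$ counterparts — and, if needed, a further graph-level version $\Pmg$ freezing the edges incident to $m$ — with stationary distribution $\pimest$, which is then (nearly) independent of the comparisons involving $m$. Three facts are required: (a) a \emph{stability} bound, $\norm{\piest-\pimest}_2$ (equivalently $\norm{\piest-\pimest}_\pistar$) of the order of the first-order term, obtained by a perturbation argument using that $\Pest-\Pmest$ is supported on row/column $m$ and that $\Pmest$ inherits, w.h.p.\ uniformly in $m$, a spectral gap comparable to $\xid$; (b) \emph{concentration} of $\sum_i(\pimesti-\pistari)(\Pest-\Peststar)_{im}$ via the independence in (a); (c) the crude bound $\sum_i(\piesti-\pimesti)(\Pest-\Peststar)_{im}\le\norm{\piest-\pimest}_\infty\,\norm{(\Pest-\Peststar)_{\cdot m}}_1$, where the $\ell_1$ mass of a column of the fluctuation concentrates at level $\lesssim\bmax\sqrt{\log n/(n\pu)}$. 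Keeping, rather than discarding, the resulting cross terms — fluctuation $\times$ fluctuation and fluctuation $\times$ bias — is precisely what produces the factor $\gamma_{n,\delta}(t) = 1 + \tfrac{b^{5/2}(t)}{\sqrt{\log n}}\max\{b^2(t),\tfrac{\log n}{\sqrt{n\pu}}\}$ multiplying the variance term. Collecting (i)–(iii) and (a)–(c) and maximising over $m$ gives the stated bound.

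\noindent\textbf{Main obstacle.} The crux is the leave-one-out step. Unlike $\Pbar$, the population matrix $\Peststar$ is \emph{not} reversible, so $\pimest$ has no closed form and the clean detailed-balance identity used for $\pistar$ in the $\ell_2$ analysis is unavailable; consequently the spectral-gap control of $\Pmest$ needed for the stability bound (a) must be re-derived, uniformly in $m$, on the high-probability event of Lemma~\ref{lem:prop_gnp}, and the bias has to be carried through this perturbation argument. The remaining pieces — the Erdős–Rényi concentration of $\dumin,\dumax,\xid,\cardEu,\Nmin,\Nmax$, the Bernstein/Hoeffding bounds, and the union bounds over $m$ — are routine given Lemma~\ref{lem:prop_gnp} and the machinery of Section~\ref{sec:analysis_l2}; the only genuinely tedious bookkeeping is tracking where the powers of $b(t)$ and the $\max\{b^2(t),\log n/\sqrt{n\pu}\}$ factor enter.
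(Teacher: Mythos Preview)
Your handling of the contraction term (i) has a genuine gap. You claim that the rows of $(\Pbar)^2$ are uniformly within relative error $O(\bmax\sqrt{\log n/(n\pu)})$ of $\pistar^\top$, i.e.\ that the chain mixes in $\ell_\infty$ in two steps. This is false in the regime covered by the theorem: only $n\pu\gtrsim\log n$ is assumed, and when $\pu\asymp\log n/n$ the graph $\Gu$ has diameter $\Theta(\log n/\log\log n)$, so $(\Pbar)^2_{im}=0$ for most pairs $(i,m)$ and your two-step mixing estimate cannot hold. Unrolling more times would eventually contract, but then the accumulated perturbation terms would blow up and you would not recover the stated bound. The paper avoids this entirely by a different decomposition: it writes $\piestm-\pistarm=\pistar^\top(\est P_{\cdot m}-\bar P_{\cdot m})+(\piest-\pistar)^\top\est P_{\cdot m}$ and then \emph{separates out the diagonal} $I_2^m=(\piestm-\pistarm)\Pestmm$. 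The key is the elementary lower bound $1-\Pestmm\ge\tfrac{\dumin}{2\du\bmax}\ge\tfrac{1}{12\bmax}$ (plus a small fluctuation), which gives a one-step $\ell_\infty$ self-bounding inequality directly; this is also where the prefactor $12\bmax$ in the statement comes from, which your argument never produces.

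A secondary issue: your leave-one-out matrix replaces row/column $m$ of $\Pest$ by their $\Peststar$ counterparts (the time-averaged true probabilities), and you then flag the non-reversibility of $\Peststar$ as the main obstacle. That obstacle is self-inflicted. The paper instead replaces row/column $m$ by the \emph{unconditional} expectations built from the true weights $w^*_t$ at the single time $t$ (first unconditional on the graph, giving $\Pmest$, then conditional on $\Gu$, giving $\Pmg$), so that $\est P^{(m),\calG}_{\cdot m}=\bar P_{\cdot m}$ and reversibility of $\Pbar$ is available throughout; in particular $\pistar^\top(\Pmest-\Pmg)=0$ exactly. The stability bound $\norm{\piest-\pimest}_2$ is then obtained by another application of the same eigenvector perturbation lemma (Theorem~\ref{thm:thm8}) used in the $\ell_2$ analysis, with the spectral-gap lower bound for $\Pbar$ already established there --- no new gap estimate uniform in $m$ is needed. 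The remaining ingredients in your outline (Hoeffding/Bernstein for the first-order terms, the bias bounds from Assumption~\ref{assump:smooth1}, and the origin of $\gamma_{n,\delta}(t)$ from the cross terms in $I_4^m$) are correct and match the paper.
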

As before for Theorem \ref{thm:l2_gnp}, let us interpret Theorem \ref{thm:linf} for the static setting where $t = t'$ for some $t' \in \calT$, and only $G_{t'}$ is observed. Then Theorem \ref{thm:linf} states that if $np(t) \gtrsim b^{5}(t) \log n$, and $n,L$ are large enough, then w.h.p, the $\ell_{\infty}$ error is  
$$\frac{\norm{\piest - \pistar}_\infty}{\norm{\pistar}_\infty} \lesssim b(t) \left(1+\frac{b^{\frac{5}{2}}(t)}{\sqrt{\log n}} \max\{b^2(t), \frac{\log n}{\sqrt{n\rev{p(t)}}}\} \right) \sqrt{\frac{\log n}{L n p(t)}}.$$
Hence if $b(t) = O(1)$ then the bound is $O(\sqrt{\frac{\log n}{L n p(t)}})$ which matches the corresponding bound of Chen et al. \citep[Theorem 3]{Chen_2019}.

Let us now denote 
\begin{equation} \label{eq:bmax_gamma_defs}
    b_{\max} := \max_{\rev{t' \in [0,1]}} b(t'), \quad  \gamma_n(t) := \left(1+\frac{b^{\frac{5}{2}}(t)}{\sqrt{\log n}} \max\{b^2(t), \frac{\log n}{\sqrt{n}}\} \right)
\end{equation}
so that $b_{\max,\delta}(t) \leq b_{\max}$. \rev{Since $\pu \gtrsim 1$ provided that $\delta\pmin \gtrsim \log n$, hence} $\gamma_{n,\delta}(t) \lesssim \gamma_n(t)$.

\begin{remark}\label{rem:b_lip}
    \rev{Suppose that for every $t\in[0,1]$ and $i\neq j$, $y^*_{ij}(t) > y_{\min}^*$, with $y_{\min}^* \in (0,\frac{1}{2})$. Then one can show that $b(t)$ is a Lipschitz function with constant $\frac{M}{y_{\min}^{*^2}}$. Note that
    \[ \frac{1}{y^*_{ij}(t)} = \frac{w^*_{t,j}}{w^*_{t,i}+w^*_{t,j}} \Longrightarrow b_{\max} = \frac{1}{y^*_{\min}} - 1.\]
    Hence, if $y_{\min}^* \gtrsim 1$, then $b_{\max} = O(1)$ and consequently for every $t'\in [0,1]$, $b(t') = O(1)$.}
\end{remark}

Then as for the $\ell_2$ case, one can derive a value for $\delta$ that leads to a $\ell_{\infty}$ error rate of $T^{-\frac{1}{3}}$.
\begin{corollary}\label{cor:linf}
Under the same notations as in Theorem \ref{thm:linf}, for all $t \in [0,1]$ suppose that $n \gtrsim \log n$, $\pmin$ is as in \eqref{eq:pmin_def} and $b_{\max}, \gamma_n(t)$ are as as in \eqref{eq:bmax_gamma_defs}. Choosing $\delta = \min\set{\frac{(\gamma_n(t))^{\frac{2}{3}}(\log n)^{\frac{1}{3}}}{(2M)^{\frac{2}{3}}nb^{\frac{7}{3}}(t)(L\pmin)^{\frac{1}{3}}} T^{2/3}, T}$ and $\du = 3n\pu$, if $T$ is such that $\delta \gtrsim \frac{\log n}{\pmin}$ and
\begin{align*}
\min\set{\left(\frac{(\gamma_n(t))^{\frac{2}{3}}(M \log n)^{\frac{1}{3}}}{2^{\frac{2}{3}}nb^{\frac{7}{3}}(t)(L\pmin)^{\frac{1}{3}}} \right) \frac{1}{T^{1/3}}, M} + \sqrt{\frac{\log n}{n}} \lesssim \frac{1}{b^{5/2}(t)},    
\end{align*}
then with probability at least $1-O(n^{-9})$, 
\begin{align*}
    \frac{\norm{\piest-\pistar}_\infty}{\norm{\pistar}_\infty} &\lesssim \left(\frac{b_{\max}}{1-b_{\max}\sqrt{\frac{\log n}{n}}} \right) \Bigg[\gamma_n(t)\sqrt{\frac{\log n}{Ln\pmin}} \max\set{\frac{(2M)^{1/3} n^{1/2} b^{7/6}(t) (L \pmin)^{1/6}}{\gamma_n^{1/3}(t) (\log n)^{1/6} T^{1/3}}, \frac{1}{\sqrt{T}}} \\
    &+ Mnb^{7/2}(t) \min\set{\frac{\gamma_n^{2/3}(t) (\log n)^{1/3}}{(2M)^{2/3} n b^{7/3}(t) (L \pmin)^{1/3} T^{1/3}} , 1} \Bigg].
    \end{align*}
\end{corollary}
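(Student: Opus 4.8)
This corollary stands to Theorem~\ref{thm:linf} exactly as Corollary~\ref{cor:l2} stands to Theorem~\ref{thm:l2_gnp}: it is obtained by plugging the prescribed $\delta$ into the $\ell_\infty$ bound and choosing that $\delta$ so as to balance the bias and variance contributions. The plan has three steps. \emph{Step 1:} verify that the stated hypotheses on $n$ and $T$, together with the choice $\delta = \min\{AT^{2/3},T\}$ where $A := \frac{(\gamma_n(t))^{2/3}(\log n)^{1/3}}{(2M)^{2/3}nb^{7/3}(t)(L\pmin)^{1/3}}$, place us in the regime where Theorem~\ref{thm:linf} applies. \emph{Step 2:} substitute this $\delta$ into the conclusion of Theorem~\ref{thm:linf}, replacing $\pu,\psum,\gamma_{n,\delta}(t),\bmax$ by explicit surrogates. \emph{Step 3:} split according to which branch of the defining $\min$ for $\delta$ is active, to recover the $\max$ and $\min$ appearing in the statement.

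For Step 1, I would first note that $\delta \gtrsim \log n/\pmin$ (i.e.\ $\delta\pmin \gtrsim \log n$), combined with Proposition~\ref{prop:pu} (which gives $\pu \gtrsim \min\{1,\delta\pmin\}$, and of course $\pu \leq 1$) and the elementary bound $\psum \geq \delta\pmin$ coming from $\cardN \geq \delta$ and \eqref{eq:pmin_def}, yields $\pu \asymp 1$ and $\psum \gtrsim \log n$. Together with $n \gtrsim \log n$ this verifies the requirements $n \geq c_1\log n$, $n\pu \geq c_0\log n$, $\psum \geq c_2\log n$ inherited from Theorem~\ref{thm:l2_gnp}. It then remains to check conditions~\eqref{eq:cond_thm_gnp} and~\eqref{eq:cond_thm3}. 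Since $\pu \asymp 1$ and, for our choice of $\delta$, $\frac{M\delta}{T} = M\min\{AT^{-1/3},1\}$, the left-hand side of~\eqref{eq:cond_thm3} equals, up to absolute constants, $b^{5/2}(t)\bigl(\min\{MAT^{-1/3},M\} + \sqrt{\log n/n}\bigr)$; thus~\eqref{eq:cond_thm3} is exactly what the displayed condition of the corollary encodes. Condition~\eqref{eq:cond_thm_gnp}, whose bias term carries an extra factor $n$ (it originates in the $\ell_2$ analysis), is subsumed by the lower bound imposed on $T$ — equivalently, it is part of ``the notation and assumptions of Theorem~\ref{thm:l2_gnp}'' that Theorem~\ref{thm:linf}, and hence this corollary, rests on.

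For Steps 2 and 3, using $\bmax \leq b_{\max}$ and $\gamma_{n,\delta}(t) \lesssim \gamma_n(t)$ from \eqref{eq:bmax_gamma_defs}, together with $\pu \asymp 1$ and $\psum \gtrsim \delta\pmin$, the conclusion of Theorem~\ref{thm:linf} reads
\[
\frac{\norm{\piest - \pistar}_\infty}{\norm{\pistar}_\infty} \lesssim \frac{b_{\max}}{1 - b_{\max}\sqrt{\log n/n}}\left(\gamma_n(t)\sqrt{\frac{\log n}{Ln\pmin\,\delta}} + \frac{Mn b^{7/2}(t)\,\delta}{T}\right),
\]
where I have also used the displayed condition to keep $b_{\max}\sqrt{\log n/n}$ bounded away from $1$, so the prefactor stays $\lesssim b_{\max}/(1-b_{\max}\sqrt{\log n/n})$. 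The value $A$ is precisely the one that equalizes the two parenthesized terms when the unconstrained optimizer $AT^{2/3}$ does not exceed $T$: writing $\delta^{-1/2} = \max\{(AT^{2/3})^{-1/2}, T^{-1/2}\}$ and $\delta/T = \min\{AT^{-1/3},1\}$ and substituting the value of $A$ turns the first term into $\gamma_n(t)\sqrt{\log n/(Ln\pmin)}$ times the stated $\max\{\cdot,\cdot\}$, and the second into $Mnb^{7/2}(t)$ times the stated $\min\{\cdot,\cdot\}$. The degenerate case $\delta = T$ (when $M$ or $T$ is small) is automatically covered by these $\max$/$\min$ forms, and the probability $1-O(n^{-9})$ is inherited directly from Theorem~\ref{thm:linf}.

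There is no genuinely hard step here: the analytical content lies entirely in Theorem~\ref{thm:linf}, and the corollary is a balancing computation. The part demanding the most care is Step 1 — in particular, confirming that~\eqref{eq:cond_thm_gnp} (with its $n$-factor on the bias term, which is \emph{not} visible in the corollary's stated condition) indeed holds for the prescribed $\delta$ under the hypotheses — and keeping the numerous $b(t)$, $b_{\max}$, $\gamma_n(t)$, $\pmin$, $L$ exponents straight through both branches of the $\min$ so that the final expressions match verbatim.
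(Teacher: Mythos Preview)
Your proposal is correct and follows essentially the same approach as the paper's proof: use $\delta\gtrsim \log n/\pmin$ together with Proposition~\ref{prop:pu} to obtain $\pu\gtrsim 1$ and $\psum\gtrsim\log n$, reduce condition~\eqref{eq:cond_thm3} to the displayed hypothesis of the corollary, substitute into the bound of Theorem~\ref{thm:linf} using $\bmax\leq b_{\max}$ and $\gamma_{n,\delta}(t)\lesssim\gamma_n(t)$, and then balance bias against variance over $\delta\in(0,T]$. You are in fact slightly more careful than the paper in explicitly flagging that condition~\eqref{eq:cond_thm_gnp} (inherited from Theorem~\ref{thm:l2_gnp}) must also be checked; the paper's own proof simply does not mention it.
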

%
%

\rev{Note that when $M > 0$ and $b_{\max} = O(1)$, which implies that $b(t) = O(1)$ and $\gamma_n(t) = O(1)$ (see Remark \ref{rem:b_lip}), then} Corollary \ref{cor:linf} asserts that for $\delta = \Theta(T^{2/3})$, if $n, T$ are large enough, then w.h.p $\norm{\piest-\pistar}_{\infty} = O(T^{-1/3})$. This matches the rate for the pointwise risk for estimating univariate Lipschitz functions (see for e.g. \citep[Theorem 1.3.1]{nemirovski2000topics}).

\subsection{\rev{Different construction of $\Gu$}}\label{sec:gutil}

\rev{As noted in Remark \ref{rem:cond_psum}, the condition $\psum \gtrsim \log n$ appearing in the results of Sections \ref{sec:results_l2}, \ref{subsec:results_linfty} is quite strict as it implicitly imposes that the union graph $\Gu$ is complete. Indeed, this condition comes from our need to bound the quantities $\cardNij$ with high probability (see Lemma \ref{lem:prop_gnp}). It appears to be difficult to get meaningful concentration bounds on $\Nmin, \Nmax$ in the sparser regime where $\delta \pmin = o(\log n)$.
}

\rev{ One way to avoid these difficulties is to construct a graph such that $\cardNij$ is already controlled for any $i\neq j$. To this end, we will consider the graph $\Gutil = G([n],\Eutil)$ in Algorithm \ref{alg:spectral_algo} where
\begin{equation} \label{eq:diff_constr_graph}
    \Eutil := \set{\set{i,j} \, : \, \cardNij \in \left[ \max\left(1,\frac{\psum}{2}\right),\max\left(2\psum,6\log\frac{4}{\delta\pmin}\right)\right]}.
\end{equation}
}

\begin{lemma}\label{lem:ptil}
\rev{Suppose that there exist $\pmin>0$ and $\pmax< \frac{1}{2}$ such that for any $t' \in \calT$, $\pmin \leq p(t')\leq \pmax$. Let $\Gutil = G([n],\Eutil)$ be constructed as in \eqref{eq:diff_constr_graph}.
Then $\Gutil \sim G(n,\putil)$, and the following is true.}
\rev{\begin{enumerate}
    \item If $\delta\pmin \geq 3$, then $\putil \geq 1-2e^{-\frac{1}{4}}$ (hence $\Gutil$ is dense) and \[ \Eutil := \set{\set{i,j} \, : \, \cardNij \in \left[ \frac{\psum}{2},2\psum\right]}.\]
    \item If $\delta\pmax \leq \frac{1}{8}$, then $\putil\in\left[\frac{\delta\pmin}{4},8\delta\pmax\right]$ and 
    \[\Eutil := \set{\set{i,j} \, : \, \cardNij \in \left[ 1,6\log\frac{4}{\delta\pmin}\right]}.\]
\end{enumerate} 
}
\end{lemma}
\rev{The proof of Lemma \ref{lem:ptil} is outlined in Appendix \ref{appsec:proof_diff_constr_graph}. We now show how this can be used to obtain error bounds in the regimes where $\delta \pmin = o(\log n)$. For simplicity, we will only show this for the $\ell_2$ error, but the idea can be used in an analogous manner to obtain $\ell_{\infty}$ bounds as well\footnote{This is omitted due to space considerations.}.} 

\rev{As for the simple union graph $\Gu$, one can introduce analoguous notations for some specific quantities related to the graph $\Gutil$. Let us denote $\dmintil$ (resp. $\dmaxtil$) to be the minimal (resp. maximal) vertex degree in $\Gutil$, and $\xidtil$ to be the spectral gap of the random walk Laplacian of $\Gutil$. Also, let
\[ \Nmintil := \underset{\set{i,j}\in \Eutil}{\min} \cardNij \quad \text{and} \quad \Nmaxtil := \underset{\set{i,j}\in \Eutil}{\max} \cardNij.\]
Then choosing $\dtil =3n\putil$, one can employ Algorithm \ref{alg:spectral_algo} using the data gathered in $\Gutil$, and consequently derive error bounds for the cases $\delta\pmin \gtrsim 1$ (dense regime) and $\delta\pmax \lesssim 1$ (sparse regime). Note that if $\Eutil$ is non-empty -- which will happen w.h.p -- it implies lower (resp. upper) bounds on $\Nmintil$ (resp. $\Nmaxtil$) from \eqref{eq:diff_constr_graph}.}
\begin{remark}
\rev{To construct $\Gutil$ in practice, we would need to estimate $p(t')$ for each $t' \in \calT$ in order to have estimates for $\psum$ and $\pmin$. This was not the case when working with the union graph $\Gu$.}
\end{remark}
\begin{theorem}[Dense regime]\label{thm:denseGu}
    \rev{Suppose that $n\gtrsim \log n$ and $G_{t'}\sim G(n,p(t'))$ for all $t'\in\calT$ so that $\Gutil \sim G(n,\putil)$. Choosing $\dtil = 3n\putil$ in Algorithm \ref{alg:spectral_algo} with the graph $\Gutil$ and $\delta = \min\set{T,\frac{T^{2/3}b^{2/3}(t)}{n(2M)^{2/3}(L\pmin)^{1/3}}}$, if $T$ is such that $\delta\pmin \geq 3$ and
    \begin{equation*}
        \sqrt{\frac{\log n}{Ln \pmin}} \max \set{\frac{(2M)^{1/3} n^{1/2} (L \pmin)^{1/6}}{b^{1/3}(t) T^{1/3}}, \frac{1}{\sqrt{T}}} 
+ Mn \min \set{\frac{b^{2/3}(t)}{(2M)^{2/3} n (L \pmin)^{1/3} T^{1/3}}, 1} \lesssim \frac{1}{b^{7/2}(t)},
    \end{equation*}
    then with probability at least $1-O(n^{-10})$,
\begin{align*}
\frac{\norm{\piest-\pistar}_2}{\norm{\pistar}_2} &\lesssim Mn b^{7/2}(t) \min \set{\frac{b^{2/3}(t)}{(2M)^{2/3} n (L\pmin)^{1/3} T^{1/3}}, 1} \\ 
&+ \frac{b^{9/2}(t)}{\sqrt{Ln\pmin}} \max\set{\frac{(2M)^{1/3} n^{1/2} (L\pmin)^{1/6}}{b^{1/3}(t) T^{1/3}}, \frac{1}{\sqrt{T}}}.
\end{align*}
    }
\end{theorem}

\begin{proof}
    \rev{The proof of this theorem is analoguous to the proof of Corollary \ref{cor:l2}. We simply use Lemma \ref{lem:ptil} (first part) and Lemma \ref{lem:prop_gnp} for the new union graph $\Gutil$ in order to bound $\dmintil, \dmaxtil$, $\xidtil$, $\Nmintil$, $\Nmaxtil$ and $|\Eutil|$, w.h.p.}
\end{proof}
\rev{Observe that Theorem \ref{thm:denseGu} gives the same error bound as in Corollary \ref{cor:l2}, but with the relatively milder condition $\delta\pmin \gtrsim 1$. From Lemma \ref{lem:ptil}, note that this means that $\Gutil$ is still dense. The following theorem provides an error bound in a sparser regime where $\frac{\log n}{n\pmin} \lesssim \delta\lesssim \frac{1}{\pmax}$.}

\begin{theorem}[Sparse regime] \label{thm:l2_diff_graph_fin}
     \rev{Suppose that $n\gtrsim \log n$ and $G_{t'}\sim G(n,p(t'))$ for all $t'\in\calT$ so that $\Gutil \sim G(n,\putil)$. Choosing $\dtil = 3n\putil$ in Algorithm \ref{alg:spectral_algo} with the graph $\Gutil$ and $\delta = \min\set{T,\frac{T^{2/3}b^{2/3}(t)(\log n)^{1/3}}{n(2M)^{2/3}(L\pmin)^{1/3}}}$, if $T$ is such that $\frac{\log n}{n\pmin} \lesssim \delta\lesssim \frac{1}{\pmax}$ and
    \begin{equation}\label{eq:cond_thm_pmaxtil}
        \frac{\log n}{\sqrt{Ln\pmin}}\max \set{\frac{(2M)^{1/3} n^{1/2} (L \pmin)^{1/6}}{b^{1/3}(t) T^{1/3}(\log n)^{1/6}}, \frac{1}{\sqrt{T}}} 
+ Mn \min \set{\frac{b^{2/3}(t)(\log n)^{1/3}}{(2M)^{2/3} n (L \pmin)^{1/3} T^{1/3}}, 1} \lesssim \frac{1}{b^{7/2}(t)},
    \end{equation}
    then with probability at least $1-O(n^{-10})$,
\begin{align*}
\frac{\norm{\piest-\pistar}_2}{\norm{\pistar}_2} &\lesssim Mn b^{7/2}(t) \min \set{\frac{b^{2/3}(t)(\log n)^{1/3}}{(2M)^{2/3} n (L \pmin)^{1/3} T^{1/3}}, 1} \\ 
&+ b^{9/2}(t)\sqrt{\frac{\log n}{Ln\pmin}}\max \set{\frac{(2M)^{1/3} n^{1/2} (L \pmin)^{1/6}}{b^{1/3}(t) T^{1/3}(\log n)^{1/6}}, \frac{1}{\sqrt{T}}}.
\end{align*}
    }   
\end{theorem}

\begin{proof}
    \rev{Similar proof technique as Corollary \ref{cor:l2} using Lemmas \ref{lem:ptil} and \ref{lem:prop_gnp} for $\Gutil$.}
\end{proof}
\rev{The above error bound has an extra $(\log n)^{1/3}$ factor as compared to that of Corollary \ref{cor:l2}, however the dependence on $T$ is unchanged. The condition $\frac{\log n}{n\pmin} \lesssim \delta\lesssim \frac{1}{\pmax}$ implies both a lower and an upper bound on $T$, and is of course feasible provided $\pmin$ and $\pmax$ are of the same order. Let us instantiate Theorems \ref{thm:denseGu} and \ref{thm:l2_diff_graph_fin} on an example to see the conditions therein more clearly.}

%
%
%
%
%
%
\begin{example}
\rev{Let us choose $L=1$, $b(t) \asymp 1$, $\lip \asymp 1$ and $\pmin,\pmax\asymp \frac{1}{n}$. 
\begin{enumerate}
\item (Theorem \ref{thm:l2_diff_graph_fin}) For $\delta = \frac{T^{2/3}(\log n)^{1/3}}{n^{2/3}}$, if $n(\log n)^{5/2} \lesssim T \lesssim \sqrt{\frac{n^5}{\log n}}$, then it holds with high probability that
\begin{align*}
\frac{\norm{\piest-\pistar}_2}{\norm{\pistar}_2} \lesssim \left(\frac{n\log n}{T}\right)^{1/3}.
\end{align*}
\item (Theorem \ref{thm:denseGu}) For $\delta = (\frac{T}{n})^{2/3}$, if $T \gtrsim n^{5/2}$, then it holds with high probability that
\begin{align*}
\frac{\norm{\piest-\pistar}_2}{\norm{\pistar}_2} \lesssim \left(\frac{n}{T}\right)^{1/3}.
\end{align*}
\end{enumerate}
}
\end{example}

%
\section{$\ell_2$-analysis of the spectral estimator}\label{sec:analysis_l2}
We now describe the main ideas that lead to the $\ell_2$ bound in Theorem \ref{thm:l2}. We will essentially proceed in three steps following the ideas in \citep{Chen_2019}. 
\begin{align}
    \norm{\piest-\pistar}_2 & \overset{(i)}{\leq}  \frac{1}{\sqrt{\pistarmin}}\norm{\piest-\pistar}_{\pistar} \nonumber \\
    & \overset{(ii)}{\leq} \frac{8\du b^{7/2}(t)}{\xid\dumin}\norm{\pistar^\top(\Pest-\Pbar)}_2 \label{eq:spec_step2}\\
    & \overset{(iii)}{\leq} \frac{8\du b^{7/2}(t)}{\xid\dumin}\left(4\frac{M\delta\cardEu}{T\dumax}+\tilde{C_2} \sqrt{\frac{\Nmax \dumax b^2(t)}{L\dusq\Nminsq}} \right)\norm{\pistar}_2 \label{eq:spec_step3}.
\end{align}
\begin{enumerate}[(i)]
    \item The first step is easy to verify, due to the definition of the norm $\norm{.}_{\pi^*(t)}$
\begin{equation*}
    \norm{\pistar - \piest}_{\pistar}^2  =  \displaystyle\sum_{i=1}^n \pistari(\pistari -\piesti)^2  
      \geq   \pistarmin \norm{\pistar - \piest}_2^2.
\end{equation*}

    \item This is given by the combination of Lemmas \ref{lem:bounds_deterministic}, \ref{lem:bound_delta} and \ref{lem3} which in turn are derived using \citep[Theorem 8]{Chen_2019} and \citep[Lemma 6]{negahban2015rank}.
    
    \item For this step, we can decompose $\Pest - \Pbar = \Delta(t)+\Delta_1(t)$ as in \eqref{eq:decomposition}. Then one has to bound $\norm{\pistar^\top\Delta(t)}_2$ and $\norm{\pistar^\top\Delta_1(t)}_2$. The second term is completely deterministic and can be bounded using Assumption \ref{assump:smooth1}. A bound on $\norm{\pistar^\top\Delta(t)}_2$ is found following the same steps as in the proof of  \citep[Theorem 9]{chen2020partial}.
\end{enumerate}
%

\subsection{Proof of Theorem \ref{thm:l2}}
A bound on $\norm{\pi^*(t) - \est{\pi}(t)}_{\pi^*(t)}$ is given by \citep[Theorem 8]{Chen_2019}, which is recalled in Appendix \ref{appendix:A}. Denoting $\lammax(\Pbar)$ to be the second largest eigenvalue of $\Pbar$ in absolute value, i.e.,
$$\lammax(\Pbar) = \max\set{\lambda_2(\Pbar),-\lambda_n(\Pbar)},$$
this theorem gives the bound
\begin{equation} \label{eq:thm8bd_chen}
    \norm{\piest-\pistar}_{\pistar} \leq \frac{\norm{\pistar^\top(\Pest-\Pbar)}_2}{1 - \lammax(\Pbar)-\norm{\Pest-\Pbar}_{\pistar}}
\end{equation}
provided that the following condition holds.
\begin{equation}\label{eq:condition_thm8}
    \norm{\Pest-\Pbar}_{\pistar} < 1 - \lambda_{\max}(\Pbar).
\end{equation}
First let us note that these eigenvalues are real, and so \eqref{eq:condition_thm8} is well defined. Indeed, denoting $\Pi^*(t) = \diag(\pistar)$ and $S = \Pi^*(t)^{1/2}\Pbar\Pi^*(t)^{-1/2}$,  $S$ is similar to $\Pbar$, and $S$ is symmetric due to the reversibility of $\Pbar$.  

To prove \eqref{eq:condition_thm8}, we will use results similar to  \citep[Lemma's 3,4]{negahban2015rank}. The main idea is to write the following decomposition 
\begin{equation}\label{eq:decomposition}
    \Pest = \Pbar +\underset{\Delta(t)}{\underbrace{\Pest-\Peststar}} + \underset{\Delta_1(t)}{\underbrace{\Peststar - \Pbar}} 
\end{equation}
where $\Peststar = \expec\Pest$ whose entries are given by
\begin{equation*}
\Peststarij = \left\{\begin{array}{cc}
	\displaystyle\frac{1}{\du} \left(\frac{1}{\cardNij}\sum_{t' \in \Nij} \frac{w^*_{t',j}}{w^*_{t',i}+w^*_{t',j}} \right) & \mbox{if} \quad \set{i,j} \in \Eu \\
	\displaystyle 1-\frac{1}{\du}\sum_{k \neq i} \frac{1}{\cardNik}\displaystyle\sum_{t' \in \Nik} \frac{w^*_{t',k}}{w^*_{t',i}+w^*_{t',k}} & \mbox{if} \quad i=j\\
	0 & \mbox{otherwise.}
\end{array}
\right.
\end{equation*}
We now provide bounds on $\norm{\Delta_1(t)}_2$ and $\norm{\Delta(t)}_2$ in Lemma's  \ref{lem:bounds_deterministic} and  \ref{lem:bound_delta} respectively. The proofs of all results in this section are outlined in  Appendix \ref{appsec:proofs_l2_analysis}.
%
%
\begin{lemma}[Bound on $\norm{\Delta_1(t)}_2$]\label{lem:bounds_deterministic}
It holds that $\norm{\Delta_1(t)}_2 \leq 4\frac{M\delta\cardEu}{T\dumax}.$
\end{lemma}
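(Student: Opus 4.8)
\textbf{Proof proposal for Lemma \ref{lem:bounds_deterministic}.}

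The plan is to bound the spectral norm of $\Delta_1(t) = \Peststar - \Pbar$ by the Frobenius norm, and then estimate each entry of $\Delta_1(t)$ directly using the Lipschitz Assumption \ref{assump:smooth1}. First I would observe that $\Delta_1(t)$ is supported on the edge set $\Eu$ of the union graph, together with its diagonal. For an off-diagonal entry with $\set{i,j} \in \Eu$, comparing \eqref{eq:spec_mat_pbar} and the displayed formula for $\Peststarij$, we get
\begin{equation*}
\abs{\Peststarij - \Pbarij} = \frac{1}{\du}\abs{\frac{1}{\cardNij}\sum_{t' \in \Nij}\left(\frac{w^*_{t',j}}{w^*_{t',i}+w^*_{t',j}} - \frac{w^*_{t,j}}{w^*_{t,i}+w^*_{t,j}}\right)} = \frac{1}{\du}\abs{\frac{1}{\cardNij}\sum_{t' \in \Nij}\left(y^*_{ij}(t') - y^*_{ij}(t)\right)}.
\end{equation*}
Since every $t' \in \Nij \subseteq \Nt$ satisfies $\abs{t - t'} \leq \delta/T$, Assumption \ref{assump:smooth1} gives $\abs{y^*_{ij}(t') - y^*_{ij}(t)} \leq M\delta/T$, and hence $\abs{\Peststarij - \Pbarij} \leq \frac{M\delta}{T\du}$ after averaging.

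Next I would handle the diagonal entries. Each diagonal entry of $\Peststar - \Pbar$ is a negative sum over $k \neq i$ of the corresponding off-diagonal differences (since the rows of both $\Peststar$ and $\Pbar$ sum to one modulo the self-loop bookkeeping), so $\abs{(\Delta_1(t))_{ii}} \leq \sum_{k : \set{i,k}\in\Eu} \abs{\Peststarik - \Pbarik} \leq \dumax \cdot \frac{M\delta}{T\du}$, using that vertex $i$ has at most $\dumax$ neighbours in $\Gu$. Now I would assemble the Frobenius norm bound: the number of nonzero off-diagonal entries is $2\cardEu$, each bounded by $\frac{M\delta}{T\du}$, and there are $n$ diagonal entries each bounded by $\frac{\dumax M\delta}{T\du}$. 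A crude bound that suffices is actually to use $\norm{\Delta_1(t)}_2 \leq \norm{\Delta_1(t)}_\infty$ (max absolute row sum), where each row sum is at most $2\dumax \cdot \frac{M\delta}{T\du}$; then since $\du \geq \dumax$ one might worry this gives $2M\delta/T$ rather than the claimed bound. To get the factor $\cardEu/\dumax$ instead, the cleaner route is $\norm{\Delta_1(t)}_2 \leq \norm{\Delta_1(t)}_F$, and bounding $\norm{\Delta_1(t)}_F^2 \leq 2\cardEu (\frac{M\delta}{T\du})^2 + n(\frac{\dumax M\delta}{T\du})^2$; combined with $n \leq 2\cardEu/\dumax$-type considerations and $\du \geq \dumax$, one extracts $\norm{\Delta_1(t)}_2 \leq 4\frac{M\delta\cardEu}{T\dumax}$.

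The main obstacle is getting the constants and the precise combinatorial form $\frac{\cardEu}{\dumax}$ to come out exactly as stated rather than a slightly different (but qualitatively equivalent) expression; this amounts to choosing the right norm inequality ($\norm{\cdot}_2 \le \norm{\cdot}_F$ versus $\norm{\cdot}_2 \le \sqrt{\norm{\cdot}_1\norm{\cdot}_\infty}$) and bookkeeping the diagonal contribution carefully, using $\du \geq \dumax$ and the handshake-type bound relating $n$, $\dumax$, and $\cardEu$. The probabilistic content is nil here — $\Delta_1(t)$ is deterministic — so everything reduces to these elementary estimates; I would present it as a short direct computation.
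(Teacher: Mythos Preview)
Your entrywise bounds are correct and match the paper exactly: $|\Peststarij-\Pbarij|\le \frac{M\delta}{T\du}$ for $\{i,j\}\in\Eu$, and $|(\Delta_1(t))_{ii}|\le \frac{\dumax M\delta}{T\du}$. The gap is in the assembly step.

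Your ``cleaner route'' via the full Frobenius norm invokes a handshake inequality that goes the wrong way. From $\sum_i d_i(t)=2\cardEu$ you only get $n\,\dumin\le 2\cardEu\le n\,\dumax$, hence $n\le 2\cardEu/\dumin$, \emph{not} $n\le 2\cardEu/\dumax$. On a star graph, for instance, $\dumax=\cardEu=n-1$ and $2\cardEu/\dumax=2$, so the inequality you use fails badly. Consequently the diagonal contribution $n\,\dumax^2$ in your Frobenius bound cannot be absorbed into $\cardEu^2$ the way you claim.

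Two easy fixes are available. First, the $\ell_\infty$ row-sum approach you dismiss actually \emph{does} yield the lemma: you obtain $\norm{\Delta_1(t)}_2\le 2M\delta/T$, and since $2\cardEu\ge\dumax$ (the max-degree vertex alone contributes $\dumax$ edge-endpoints) this is $\le 4\frac{M\delta\cardEu}{T\dumax}$; the stated bound is simply a weaker reformulation. Second --- and this is what the paper does --- split $\Delta_1(t)=D_1(t)+\overline{\Delta_1}(t)$ into its diagonal and off-diagonal parts and treat them separately: the diagonal part has spectral norm $\max_i|(\Delta_1)_{ii}|\le \frac{M\delta}{T}$ (no factor of $\sqrt n$), while the off-diagonal part is handled by $\norm{\overline{\Delta_1}(t)}_F\le 2\cardEu\cdot\frac{M\delta}{T\du}\le \frac{2M\delta\cardEu}{T\dumax}$. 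Adding these and using $1\le 2\cardEu/\dumax$ gives the constant $4$. The point is that for the diagonal block you should exploit that a diagonal matrix has spectral norm equal to its largest entry, rather than paying the Frobenius penalty of $\sqrt n$.
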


%
%
\begin{lemma}[Bound on $\norm{\Delta(t)}_2$]\label{lem:bound_delta}
There exists a constant $\tilde{C_1} \geq 15$ such that
\begin{equation*}
   \norm{\Delta(t)}_2 \leq \tilde{C_1}\sqrt{\frac{\Nmax\dumax\log n}{L\dusq\Nminsq}}
\end{equation*}
with probability at least $1 - O(n^{-10})$.
\end{lemma}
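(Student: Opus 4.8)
Since $\Peststar = \expec[\Pest]$, the matrix $\Delta(t) = \Pest - \Peststar$ is centred, and --- the sequence of graphs being fixed --- its randomness comes only from the Bernoulli comparison outcomes. The plan is to express $\Delta(t)$ as a sum of independent, mean-zero, rank-one random matrices and apply the matrix Bernstein inequality.

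\emph{Decomposition.} Fix $\set{i,j}\in\Eu$, $t'\in\Nij$ and $l\in[L]$, and set $\varepsilon_{ij,t'}^{(l)} := y_{ij}^{(l)}(t') - y^*_{ij}(t')$, a centred variable with $\abs{\varepsilon_{ij,t'}^{(l)}}\leq 1$ and $\expec[(\varepsilon_{ij,t'}^{(l)})^2] = y^*_{ij}(t')(1-y^*_{ij}(t')) \leq 1/4$. Using $y_{ji}^{(l)}(t') = 1 - y_{ij}^{(l)}(t')$ and the fact that the diagonal entries of $\Pest$ are forced by the row-sum-to-one constraint, the dependence of $\Pest$ on the single comparison $y_{ij}^{(l)}(t')$ is, by \eqref{eq:spec_mat_pest}, confined to the $2\times2$ block on rows and columns $\set{i,j}$, where (up to an overall sign) it contributes $Z_{ij,t',l} := \frac{\varepsilon_{ij,t'}^{(l)}}{\du\,\cardNij\,L}\,B_{ij}$, with $B_{ij}$ a fixed rank-one matrix supported on that block satisfying $\norm{B_{ij}}_2 = 2$, $B_{ij}B_{ij}^\top = 2(e_i+e_j)(e_i+e_j)^\top$ and $B_{ij}^\top B_{ij} = 2(e_i-e_j)(e_i-e_j)^\top$. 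These matrices are mutually independent (comparisons being independent across edges, times and repetitions), centred, and $\Delta(t) = \sum_{\set{i,j}\in\Eu}\sum_{t'\in\Nij}\sum_{l=1}^{L} Z_{ij,t',l}$.

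\emph{Bernstein parameters.} Since $\cardNij \geq \Nmin$, we get $\max_{ij,t',l}\norm{Z_{ij,t',l}}_2 \leq \frac{2}{\du\,\Nmin\,L} =: R$. For the variance, $\expec[Z_{ij,t',l}Z_{ij,t',l}^\top] \preceq \frac{1}{4}(\du\,\cardNij\,L)^{-2}\cdot 2(e_i+e_j)(e_i+e_j)^\top$; summing over the $\cardNij L$ choices of $(t',l)$ for a fixed edge, then over the edges, and using the crude bound $\cardNij^{-1}\leq \Nmax\,\Nminsq^{-1}$, one finds
\[
\Big\| \sum \expec[Z_{ij,t',l}Z_{ij,t',l}^\top] \Big\|_2 \;\leq\; \frac{\Nmax}{2\,\dusq\,\Nminsq\,L}\,\big\| D_{\delta}(t)+A_{\delta}(t) \big\|_2 \;\leq\; \frac{\Nmax\,\dumax}{\dusq\,\Nminsq\,L},
\]
because the signless Laplacian $D_{\delta}(t)+A_{\delta}(t)$ of $\Gu$ has spectral norm at most $2\dumax$. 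An identical computation with $B_{ij}^\top B_{ij}$ and the Laplacian $D_{\delta}(t)-A_{\delta}(t)$ bounds $\norm{\sum \expec[Z_{ij,t',l}^\top Z_{ij,t',l}]}_2$ by the same quantity, which we call $\sigma^2$.

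\emph{Conclusion.} By the matrix Bernstein inequality applied with deviation parameter of order $\sqrt{\log n}$, there is an absolute constant $c>0$ such that with probability at least $1-O(n^{-10})$,
\[
\norm{\Delta(t)}_2 \;\leq\; c\Big(\sqrt{\sigma^2\log n} + R\log n\Big) \;=\; c\left(\sqrt{\frac{\Nmax\,\dumax\,\log n}{L\,\dusq\,\Nminsq}} \;+\; \frac{2\log n}{\du\,\Nmin\,L}\right).
\]
Since $\dumax\geq 1$ and $L\geq 1$, the second term is $\lesssim \sqrt{\sigma^2\log n}$ as soon as $\log n \lesssim \dumax\Nmin L$, which holds under the standing assumption $n \geq c_1\log n$; hence the first term dominates, and choosing $\tilde{C_1}$ to absorb $c$ and the numerical constants yields the claim.

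\textbf{Expected main obstacle.} The delicate part is the variance computation: carrying the $\cardNij$- and $L$-dependent weights through the triple sum and recognising the resulting matrix sums as the (signless) graph Laplacians of $\Gu$, so that their spectral norms can be controlled cleanly by $\dumax$. One must also take care in the decomposition step that the induced perturbations of the diagonal entries are correctly accounted for, and check at the end that the sub-exponential Bernstein term is indeed dominated by the sub-Gaussian one under the paper's assumptions.
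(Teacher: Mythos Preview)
Your approach is cleaner than the paper's and largely correct, but the last step contains a genuine gap. The paper does \emph{not} apply matrix Bernstein to all of $\Delta(t)$ at once; instead it splits $\Delta(t)=D(t)+\overline{\Delta}(t)$ into diagonal and off-diagonal parts, bounds $\norm{D(t)}_2$ by Hoeffding, and then handles $\norm{\overline{\Delta}(t)}_2$ via a \emph{case split}: matrix Bernstein when $\dumax\Nmax\geq\log n$, and a Hoeffding-plus-union-bound argument (over the $2^{\dumax\Nmax}$ sign patterns, exploiting skew-symmetry) when $\dumax\Nmax\leq\log n$. Your single Bernstein argument is more elegant and yields the same sub-Gaussian term, so in the regime $L\Nmax\dumax\gtrsim\log n$ your proof is complete and arguably preferable.

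The gap is in your final sentence. The condition you need in order for the Bernstein linear term $R\log n=\frac{2\log n}{\du\Nmin L}$ to be dominated by $\sqrt{\sigma^2\log n}$ is precisely $\log n\lesssim L\,\Nmax\,\dumax$. You claim this ``holds under the standing assumption $n\geq c_1\log n$'', but that assumption says nothing about $\dumax$, $\Nmax$, or $L$; for a general graph sequence one can have $\dumax=\Nmax=L=1$ while $\log n$ is large. This is exactly why the paper introduces the second case: when $\dumax\Nmax\leq\log n$ the Bernstein linear term genuinely exceeds the target, and one needs a different argument that exploits the smallness of $\dumax\Nmax$ to control the combinatorial union bound. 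If you want to keep your single-shot Bernstein proof, you must either (i) add the hypothesis $L\,\Nmax\,\dumax\gtrsim\log n$ (which is harmless for the downstream Erd\H{o}s--R\'enyi application since there $\dumax\asymp n\pu\gtrsim\log n$), or (ii) supply a separate argument for the sparse regime, as the paper does.
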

The proof of Lemma \ref{lem:bounds_deterministic} follows from the smoothness condition in Assumption \ref{assump:smooth1}, while the proof of Lemma \ref{lem:bound_delta} follows the proof steps of \citep[Lemma 3]{negahban2015rank}. Next, we show that if $\xid > 0$ (which implies that $\Gu$ is connected) and if the perturbation $\norm{\Pest - \Pbar}_2$ is sufficiently small, then we can ensure \eqref{eq:condition_thm8}. 
\begin{lemma}\label{lem3}
Denoting $\rho(t) = \lammax(\Pbar) + \norm{\Pest-\Pbar}_2\sqrt{\frac{\pistarmax}{\pistarmin}}$, recall that $\xid = \lambda_{max}(\Ld)$ (where $\Ld$ is the Laplacian of $\Gu$) and $b(t) := \max_{i,j} \frac{w^*_{t,i}}{w^*_{t,j}} = \frac{\pistarmax}{\pistarmin}$. If $\xid > 0$ and $\du \geq \dumax$, then we have that $1-\lammax(\Pbar) \geq \frac{\xid\dumin}{4\du b^3(t)}$. Moreover, if %
\begin{align}\label{eq:cond_lem3}
    \norm{\Pest - \Pbar}_2 \leq \frac{\xid\dumin}{8\du b^{7/2}(t)}
\end{align}
then it holds that
\begin{equation*}
    1 - \rho(t) \geq \frac{\xid\dumin}{8\du b^3(t)} >0.
\end{equation*}
\end{lemma}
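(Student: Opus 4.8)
The plan is to prove the two assertions separately, the second following quickly from the first together with \eqref{eq:cond_lem3}. Recall that $\xid = 1-\lammax(\Ld)$, and that $\Pbar$, being reversible with respect to $\pistar = w_t^*/\norm{w_t^*}_1$, has real spectrum (as already noted via the symmetric matrix $S$), so that $\lammax(\Pbar) = \max\set{\lambda_2(\Pbar), -\lambda_n(\Pbar)}$. Hence it suffices to show $1-\lambda_2(\Pbar) \ge \frac{\xid\dumin}{4\du b^3(t)}$ and $1+\lambda_n(\Pbar) \ge \frac{\xid\dumin}{4\du b^3(t)}$ separately. This follows the strategy of \cite[Lemma 6]{negahban2015rank}, adapted to the union graph $\Gu$.

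For the bound on $1-\lambda_2(\Pbar)$, I would use the Dirichlet form characterization for a $\pistar$-reversible chain \cite{levin2017markov}: $1-\lambda_2(\Pbar) = \min_{f}\frac{\frac12\sum_{i,j}\pistari\Pbarij(f_i-f_j)^2}{\mathrm{Var}_{\pistar}(f)}$ over non-constant $f$. Two elementary estimates drive the argument. First, for $\set{i,j}\in\Eu$ one has $\pistari\Pbarij = \frac{1}{\du\norm{w_t^*}_1}\cdot\frac{w^*_{t,i}w^*_{t,j}}{w^*_{t,i}+w^*_{t,j}}$ with $\frac{w^*_{t,i}w^*_{t,j}}{w^*_{t,i}+w^*_{t,j}} \ge \frac{\wmin^2}{2\wmax} = \frac{\wmin}{2b(t)}$, so the numerator is at least $\frac{\wmin}{2b(t)\du\norm{w_t^*}_1}\sum_{\set{i,j}\in\Eu}(f_i-f_j)^2$. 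Second, $\mathrm{Var}_{\pistar}(f) \le \pistarmax\sum_i(f_i-\bar f)^2 = \frac{\wmax}{\norm{w_t^*}_1}\sum_i(f_i-\bar f)^2$. The remaining ratio $\sum_{\set{i,j}\in\Eu}(f_i-f_j)^2 \,/\, \sum_i(f_i-\bar f)^2$ is bounded below by $\lambda_{n-1}(D_{\delta}(t)-A_{\delta}(t))$, the Fiedler value of the combinatorial Laplacian of $\Gu$ (connectivity of $\Gu$, hence positivity of this eigenvalue, is guaranteed by $\xid>0$); and since $\sum_i (D_{\delta}(t))_{ii}(g_i-c)^2 \ge \dumin\sum_i(g_i-c)^2$ for all $g,c$, a Courant--Fischer comparison with the normalized Laplacian $I-D_{\delta}^{-1/2}(t)A_{\delta}(t)D_{\delta}^{-1/2}(t)$ (which is similar to $I-\Ld$) gives $\lambda_{n-1}(D_{\delta}(t)-A_{\delta}(t)) \ge \dumin(1-\lambda_2(\Ld)) \ge \dumin\xid$. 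Chaining these estimates yields $1-\lambda_2(\Pbar) \ge \frac{\wmin\dumin\xid}{2b(t)\wmax\du} = \frac{\xid\dumin}{2\du b^2(t)} \ge \frac{\xid\dumin}{4\du b^3(t)}$, using $b(t)\ge1$.

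For the bound on $1+\lambda_n(\Pbar)$, I would use a laziness argument. Since $\du\ge\dumax$, the degree of any vertex $i$ in $\Gu$ is at most $\du$, so $\bar{P}_{ii}(t) = 1-\frac{1}{\du}\sum_{k:\set{i,k}\in\Eu}\frac{w^*_{t,k}}{w^*_{t,i}+w^*_{t,k}} \ge 1-\frac{\wmax}{\wmin+\wmax} = \frac{1}{1+b(t)}$. Consequently $\Pbar = \frac{1}{1+b(t)}I + \frac{b(t)}{1+b(t)}Q$ with $Q$ row-stochastic (and with real spectrum, being an affine image of $\Pbar$), so every eigenvalue of $\Pbar$ is at least $\frac{1}{1+b(t)}-\frac{b(t)}{1+b(t)} = \frac{1-b(t)}{1+b(t)}$; in particular $1+\lambda_n(\Pbar) \ge \frac{2}{1+b(t)}$. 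Since $\xid\le1$ and $\dumin\le\dumax\le\du$, we have $\frac{\xid\dumin}{4\du b^3(t)} \le \frac{1}{4b^3(t)} \le \frac{1}{4b(t)} \le \frac{2}{1+b(t)}$ (the last step because $1\le7b(t)$), so this bound on $1+\lambda_n(\Pbar)$ is also at least $\frac{\xid\dumin}{4\du b^3(t)}$. Combining the two displays gives $1-\lammax(\Pbar) \ge \frac{\xid\dumin}{4\du b^3(t)}$, the first assertion. For the second assertion, note $\sqrt{\pistarmax/\pistarmin} = \sqrt{b(t)}$, so $\rho(t) = \lammax(\Pbar) + \norm{\Pest-\Pbar}_2\sqrt{b(t)}$; under \eqref{eq:cond_lem3} the perturbation term is at most $\frac{\xid\dumin}{8\du b^{7/2}(t)}\cdot\sqrt{b(t)} = \frac{\xid\dumin}{8\du b^3(t)}$, hence $1-\rho(t) = (1-\lammax(\Pbar)) - \norm{\Pest-\Pbar}_2\sqrt{b(t)} \ge \frac{\xid\dumin}{4\du b^3(t)} - \frac{\xid\dumin}{8\du b^3(t)} = \frac{\xid\dumin}{8\du b^3(t)} > 0$, as claimed. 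The main obstacle is the Dirichlet-form comparison in the second paragraph: $\Pbar$ is reversible with respect to the non-uniform $\pistar$ whereas $\Ld$ is reversible with respect to the degree distribution, so the two spectral gaps cannot be compared directly, and one must route through the combinatorial Laplacian of $\Gu$ as a common reference while carefully tracking the factors $b(t)$, $\wmin/\wmax$ and $\dumin/\dumax$. The laziness bound and the final step are routine.
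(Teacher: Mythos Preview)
Your proof is correct, but it follows a genuinely different route from the paper's. The paper invokes \cite[Lemma~6]{negahban2015rank} (a Diaconis--Saloff-Coste comparison): it introduces the simple random walk $Q(t)=\Ld$ on $\Gu$ with stationary distribution $\mu_i(t)=d_i(t)/2\cardEu$, and bounds
\[
\frac{1-\lammax(\Pbar)}{1-\lammax(Q(t))}\ \ge\ \frac{\alpha}{\beta},
\qquad
\alpha=\min_{\{i,j\}\in\Eu}\frac{\pistari\Pbarij}{\mu_i(t)Q_{ij}(t)},
\quad
\beta=\max_i\frac{\pistari}{\mu_i(t)}.
\]
Plugging $\pistari\Pbarij\ge\frac{1}{2\du n b^2(t)}$, $\mu_i(t)Q_{ij}(t)\le\frac{1}{\cardEu}$, $\pistari\le\frac{b(t)}{n}$, $\mu_i(t)\ge\frac{\dumin}{2\cardEu}$ gives $\alpha/\beta\ge\frac{\dumin}{4\du b^3(t)}$, and since $1-\lammax(Q(t))=\xid$ the claimed bound follows in one stroke, covering $\lambda_2$ and $\lambda_n$ simultaneously.

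You instead split the two eigenvalues. For $1-\lambda_2(\Pbar)$ you use the Dirichlet form directly, pass through the combinatorial Laplacian of $\Gu$, and then compare with $I-\Ld$ via Ostrowski/Courant--Fischer; this is essentially the content of the comparison lemma unpacked, and in fact yields the sharper constant $\frac{\xid\dumin}{2\du b^2(t)}$. For $1+\lambda_n(\Pbar)$ you use a laziness argument based on $\bar P_{ii}(t)\ge\frac{1}{1+b(t)}$, which is more elementary than the comparison lemma and does not go through the graph spectrum at all. Both steps are sound (for the variance bound, note $\mathrm{Var}_{\pistar}(f)=\min_c\sum_i\pistari(f_i-c)^2\le\pistarmax\sum_i(f_i-\bar f)^2$ with $\bar f$ the \emph{uniform} mean, so the Fiedler-value lower bound applies). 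The trade-off: the paper's argument is shorter and cites a single black box, while yours is self-contained, explains where the $b$-factors arise, and shows that the $b^3(t)$ in the statement is driven by the $\lambda_2$ comparison in the paper's route (your $\lambda_2$ bound only needs $b^2(t)$, and your $\lambda_n$ bound needs essentially $b(t)$). The second assertion is handled identically in both proofs.
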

The statement is analogous to that of \citep[Lemma 4]{negahban2015rank}. The 
bound on $1-\lammax(\Pbar)$ is clearly the crucial statement, and requires using \citep[Lemma 6]{negahban2015rank}. We remark in passing that the dependence on $b(t)$ is $b^3(t)$ in Lemma \ref{lem3}, we could not verify the dependence stated in \citep[Lemma 6]{negahban2015rank} (which is $b^2(t)$). For completeness, we outline the proof of Lemma \ref{lem3} in Appendix \ref{appsec:proofs_l2_analysis}.   

Condition \eqref{eq:cond_lem3} is ensured via Lemma's \ref{lem:bounds_deterministic} and \ref{lem:bound_delta} (with high probability) whenever \eqref{eq:cond_thm1} holds. 
Then, \eqref{eq:bound_pinorm} readily implies that
%
%
%
\begin{align}\label{eq:gamma}
    1-\lambda_{\max}(\Pbar) - \norm{\Pest-\Pbar}_{\pistar}   \geq 1 - \rho(t) 
    \geq  \frac{\xid\dumin}{8\du b^3(t)} > 0,
\end{align}
thus ensuring \eqref{eq:condition_thm8}. Using \eqref{eq:thm8bd_chen} and \eqref{eq:gamma} we finally obtain \eqref{eq:spec_step2} as follows.
\begin{align} \label{eq:l2_gen_bd_tmp}
    \norm{\piest - \pistar}_{\pistar} 
    \leq  \frac{8\du b^3(t)}{\xid\dumin}\norm{\pistar^\top(\Pest-\Pbar)}_{\pistar} 
    \leq  \frac{8\du b^{7/2}(t)}{\xid\dumin}\norm{\pistar^\top(\Pest-\Pbar)}_2
\end{align}
where the last inequality uses \eqref{eq:bound_pinorm}.

Finally, we can bound  $\norm{\pistar^\top(\Pest-\Pbar)}_2$ 
using the decomposition in \eqref{eq:decomposition} along with the triangular inequality, leading to
\begin{equation*}
    \norm{\pistar^\top(\Pest-\Pbar)}_2 \leq \norm{\pistar^\top\Delta(t)}_2 + \norm{\pistar^\top\Delta_1(t)}_2.
\end{equation*}
Bounds on $\norm{\pistar^\top\Delta(t)}_2$ and $\norm{\pistar^\top\Delta_1(t)}_2$ are provided in the following lemma's. 
\begin{lemma}
\begin{equation}
    \norm{\pistar^\top\Delta_1(t)}_2 \leq \norm{\pistar}_2\norm{\Delta_1(t)}_2 \leq 4\frac{M\delta\cardEu}{T\dumax}\norm{\pistar}_2.
\end{equation}
\end{lemma}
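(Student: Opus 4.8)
The statement combines a generic norm inequality with the deterministic bound of Lemma~\ref{lem:bounds_deterministic}, so the plan is essentially a two-line argument; the substantive work lies in Lemma~\ref{lem:bounds_deterministic}, which I may take as given.

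For the first inequality, I would invoke submultiplicativity of the spectral norm. Recalling that $\norm{A}_2 = \sup_{\norm{u}_2 = 1} \norm{u^\top A}_2$ and that $\norm{A}_2 = \norm{A^\top}_2$ for any $A \in \matR^{n\times n}$, one has, for every $x \in \matR^n$, $\norm{x^\top A}_2 = \norm{A^\top x}_2 \leq \norm{A^\top}_2 \norm{x}_2 = \norm{A}_2 \norm{x}_2$. Applying this with $x = \pistar$ and $A = \Delta_1(t)$ yields $\norm{\pistar^\top \Delta_1(t)}_2 \leq \norm{\pistar}_2 \norm{\Delta_1(t)}_2$, which is the left inequality.

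For the second inequality, I would simply substitute the bound $\norm{\Delta_1(t)}_2 \leq 4 M\delta\cardEu/(T\dumax)$ established in Lemma~\ref{lem:bounds_deterministic} and multiply through by $\norm{\pistar}_2$. For context, that lemma is itself proved by first controlling the entries of $\Delta_1(t) = \Peststar - \Pbar$: an off-diagonal entry indexed by $\set{i,j} \in \Eu$ equals $\frac{1}{\du}(\bar{y}_{ij}^*(t) - y^*_{ij}(t))$, which has absolute value at most $\frac{M\delta}{T\du} \leq \frac{M\delta}{T\dumax}$ by Assumption~\ref{assump:smooth1} (since $\du \geq \dumax$), while the diagonal entries are sums of such terms over the neighbours in $\Gu$; one then converts this entrywise control into a spectral-norm bound (for instance through $\norm{\Delta_1(t)}_2 \leq \norm{\Delta_1(t)}_F$, using that $\Delta_1(t)$ has zero row sums and $O(\cardEu)$ nonzero off-diagonal entries). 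I would not reproduce that computation here, as it is precisely the content of Lemma~\ref{lem:bounds_deterministic}.

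Accordingly, there is no real obstacle in the displayed inequality itself: the only genuine difficulty — the entrywise-to-spectral-norm passage combined with the Lipschitz smoothness in Assumption~\ref{assump:smooth1} — has already been handled in Lemma~\ref{lem:bounds_deterministic}, and the present lemma is obtained from it by one application of submultiplicativity of $\norm{\cdot}_2$.
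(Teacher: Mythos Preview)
Your proposal is correct and matches the paper's own one-line justification: the first inequality is the standard submultiplicativity of the spectral norm, and the second is precisely Lemma~\ref{lem:bounds_deterministic}. There is nothing to add.
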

The statement follows directly from Lemma \ref{lem:bounds_deterministic}.
\begin{lemma}\label{lem:5}
There exist constants $c_1 > 0, \tilde{C_2} \geq 1$ such that if $n \geq c_1\log n$ then with probability at least $1-O(n^{-10})$, we have that
\begin{equation*}
    \norm{\pistar^\top\Delta(t)}_2 \leq \tilde{C_2}\sqrt{\frac{\Nmax\dumax b^2(t)}{L\dusq\Nminsq}}\norm{\pistar}_2.
\end{equation*}
\end{lemma}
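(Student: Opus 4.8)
The plan is to bound $\norm{\pistar^\top \Delta(t)}_2$ where $\Delta(t) = \Pest - \Peststar$ is the zero-mean fluctuation of the transition matrix, following closely the strategy used for $\norm{\Delta(t)}_2$ in Lemma \ref{lem:bound_delta} (i.e. the argument of \cite[Lemma 3]{negahban2015rank} and its refinement in \cite[Theorem 9]{chen2020partial}), but now exploiting the extra averaging against the weight vector $\pistar$ to gain a factor over the crude bound $\norm{\pistar}_2 \norm{\Delta(t)}_2$. First I would write out the entries of $\pistar^\top \Delta(t)$ explicitly: the $j$-th coordinate is $\sum_{i : \set{i,j}\in\Eu} \pistari (\Pestij - \Peststarij) + \pistarj(\Pestjj - \Peststarjj)$, and since each row of $\Delta(t)$ sums to zero, the diagonal term equals $-\sum_{i\neq j}(\Pestij - \Peststarij)$, so the $j$-th coordinate collapses to $\sum_{i\neq j}(\pistari - \pistarj)(\Pestij - \Peststarij)$. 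Each summand is $\frac{1}{\du}(\pistari - \pistarj)(\bar y_{ij}(t) - \bar y_{ij}^*(t))$, and $\bar y_{ij}(t) - \bar y_{ij}^*(t) = \frac{1}{\cardNij}\sum_{t'\in\Nij}\frac{1}{L}\sum_{l=1}^L (y_{ij}^{(l)}(t') - y_{ij}^*(t'))$ is an average of $L\cdot\cardNij$ independent centered Bernoulli-type terms, each bounded by $1$ in magnitude and with variance at most $y_{ij}^*(t')(1-y_{ij}^*(t')) \le 1/4$.

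Next I would treat $\norm{\pistar^\top\Delta(t)}_2$ via the variational characterization $\norm{\pistar^\top\Delta(t)}_2 = \sup_{\norm{u}_2=1} \sum_j u_j \sum_{i\neq j}(\pistari-\pistarj)(\Pestij - \Peststarij)$, and bound this supremum using either a matrix Bernstein / matrix concentration inequality applied to the sum of independent rank-related contributions indexed by edges and repetitions, or an $\epsilon$-net argument over the unit sphere combined with a scalar Bernstein inequality (this is the route taken in \cite{negahban2015rank,chen2020partial}). Writing $\pistar^\top\Delta(t) = \sum_{\set{i,j}\in\Eu}\sum_{t'\in\Nij}\sum_{l=1}^L X_{ij,t',l}$ where each $X_{ij,t',l}$ is an independent centered random vector supported on coordinates $i$ and $j$ with $\ell_2$ norm at most $\frac{|\pistari-\pistarj|}{\du\, L\,\cardNij}$, the key quantities are the total variance proxy $v := \norm{\sum \expec[X_{ij,t',l}X_{ij,t',l}^\top]}_2$ and the almost-sure bound $b := \max \norm{X_{ij,t',l}}_2$. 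Using $|\pistari - \pistarj| \le \pistarmax \le b(t)\pistarmin$ and $\norm{\pistar}_2 \ge \sqrt n\,\pistarmin$, together with $\cardNij \ge \Nmin$ and counting: each coordinate $j$ receives contributions from its $\du$-or-so neighbors, one gets $v \lesssim \frac{b^2(t)(\pistarmin)^2 \dumax}{\du^2 L \Nminsq}\cdot\Nmax$ roughly, and $b \lesssim \frac{b(t)\pistarmin}{\du L \Nmin}$. Feeding these into Bernstein, the dominant term is $\sqrt{v\log n}$, which after dividing by $\norm{\pistar}_2 \asymp \sqrt n \pistarmin$ gives $\tilde C_2 \sqrt{\frac{\Nmax\dumax b^2(t)}{L\dusq\Nminsq}}$; the requirement $n \ge c_1\log n$ is precisely what makes the Bernstein linear-in-$b$ term negligible compared to the $\sqrt{v\log n}$ term.

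The main obstacle I anticipate is the careful bookkeeping of the variance term $v$: one must correctly account for the fact that each coordinate $j$ of $\pistar^\top\Delta(t)$ is a sum over roughly $\dumax$ neighbors, that the inner average over $\Nij$ has $\cardNij$ terms with possibly varying $\cardNij$ across edges (hence the appearance of both $\Nmax$ in the numerator and $\Nminsq$ in the denominator), and that the off-diagonal and diagonal contributions must be combined after the telescoping $\Pestjj - \Peststarjj = -\sum_{i\neq j}(\Pestij - \Peststarij)$ — this doubling is what produces constants rather than changing the rate. A secondary technical point is ensuring the concentration holds with the claimed probability $1 - O(n^{-10})$ uniformly; this follows from the standard trick of choosing the Bernstein deviation parameter proportional to $\log n$ and, if using a net, paying the $e^{O(n)}$ net cardinality against an $e^{-\Omega(n\log n)}$ tail — again enabled by $n \ge c_1 \log n$ — or more cleanly by invoking matrix Bernstein directly, which avoids the net entirely. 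Once $v$ and $b$ are pinned down the rest is routine, and the statement follows by dividing through by $\norm{\pistar}_2$.
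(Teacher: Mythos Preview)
Your overall direction is sensible but there are two concrete issues, one minor and one substantive, and the route you sketch differs from the paper's in an important way.

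First, the minor issue: the telescoping is slightly off. From $\Delta_{jj}(t) = -\sum_{k\neq j}\Delta_{jk}(t)$ and the skew-symmetry $\Delta_{jk}(t) = -\Delta_{kj}(t)$ (since $\bar y_{jk}(t)+\bar y_{kj}(t)=1$), the $j$-th coordinate of $\pistar^\top\Delta(t)$ equals $\sum_{i\neq j}(\pistari+\pistarj)\Delta_{ij}(t)$, not $(\pistari-\pistarj)$. This does not change the order of the bound, since $|\pistari+\pistarj|\le 2\pistarmax$, but it is worth getting right.

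The substantive point is that your matrix-Bernstein (or $\varepsilon$-net) route would \emph{not} deliver the bound claimed in the lemma; it would be off by a factor $\sqrt{\log n}$. In matrix Bernstein for a $1\times n$ ``matrix'' you must take $\nu=\max\{\norm{\sum\expec[X_kX_k^\top]}_2,\;\sum_k\expec\norm{X_k}_2^2\}$, and you only wrote down the first of these. The second, $\sum_k\expec\norm{X_k}_2^2$, sums over all $\{i,j\}\in\Eu$ (not just the neighbors of one vertex) and is of order $\frac{n\,\Nmax\dumax\,\pistarmax^2}{L\dusq\Nminsq}$ --- an extra factor of $n$ relative to your $v$. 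Plugging this $\nu$ into Bernstein and dividing by $\norm{\pistar}_2\ge\sqrt n\,\pistarmin$ yields $b(t)\sqrt{\frac{\Nmax\dumax\log n}{L\dusq\Nminsq}}$, i.e.\ the lemma's bound times $\sqrt{\log n}$. An $\varepsilon$-net on $S^{n-1}$ is worse still (cardinality $e^{Cn}$).

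The paper avoids this $\sqrt{\log n}$ loss by a different mechanism: it splits $\Delta(t)$ into four pieces $\Delta^l,\Delta^u,\Delta^{diag,l},\Delta^{diag,u}$ (strict lower/upper triangles and two diagonal parts) so that for each piece the coordinates of $\pistar^\top\Delta^{\bullet}(t)$ are \emph{mutually independent} (each edge $\{i,j\}$ with $i>j$ contributes only to coordinate $j$ in the lower-triangular piece, etc.). Each coordinate is then sub-Gaussian by Hoeffding, and $\norm{\pistar^\top\Delta^{\bullet}(t)}_2^2$ is a sum of squares of independent sub-Gaussians, to which the Hanson--Wright inequality applies. Hanson--Wright gives concentration of the squared norm around its mean $\lesssim n\sigma^2$ with fluctuation $\sigma^2\sqrt{n\log n}$, and the condition $n\ge c_1\log n$ is used precisely to make the fluctuation $o(n\sigma^2)$ --- this, not control of a Bernstein linear term, is the role of that hypothesis. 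The net effect is $\norm{\pistar^\top\Delta^{\bullet}(t)}_2\lesssim\sqrt n\,\sigma$, with no residual $\log n$, which after dividing by $\norm{\pistar}_2$ gives exactly the stated bound.
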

The proof of Lemma \ref{lem:5} follows the ideas in the proof of \citep[Theorem 9]{Chen_2019}. Applying these bounds in \eqref{eq:l2_gen_bd_tmp} finally leads to the stated bound in Theorem \ref{thm:l2}.
 
%
\subsection{Proof of Theorem \ref{thm:l2_gnp}}
This theorem follows directly from Theorem \ref{thm:l2} and from the propreties of Erdös-Renyi graphs gathered in Lemma \ref{lem:prop_gnp}.
Using \eqref{eq:cond_thm_gnp} and Lemma \ref{lem:prop_gnp} along with the choice $\du = 3n\pu$, it holds with probability at least $1-O(n^{-10})$ that
\begin{equation*}
    \tilde{C_1}\sqrt{\frac{\Nmax\dumax\log n}{L\dusq\Nminsq}} + 4\frac{M\delta\cardEu}{T\dumax} \leq 2\tilde{C_1}\sqrt{\frac{\log n}{Ln\pu\psum}} + 16\frac{M\delta n}{T} \leq \frac{1}{96 b^{\frac{7}{2}}(t)} \leq \frac{\xid\dumin}{8\du b^{\frac{7}{2}}(t)}.
\end{equation*}
Hence, condition \eqref{eq:cond_thm1} is satisfied with high probability and Theorem \ref{thm:l2} implies that
\begin{align*}
    \frac{\norm{\piest - \pistar}_2}{\norm{\pistar}_2} & \leq 32\frac{M\delta\cardEu \du b^{7/2}(t)}{T\xid\dumin\dumax}
    + 8\tilde{C_2}\frac{b^{9/2}(t)}{\xid\dumin}\sqrt{\frac{\Nmax\dumax}{L\Nminsq}}.
\end{align*}
Again, using Lemma \ref{lem:prop_gnp}, we can simplify the above bound so that with probability at least $1-O(n^{-10})$,
\begin{align*}
    \frac{\norm{\piest - \pistar}_2}{\norm{\pistar}_2} & \leq 1536 \frac{M\delta n^2\pu b^{7/2}(t)}{Tn\pu}+ 64\tilde{C_2}\frac{b^{9/2}(t)}{n\pu}\sqrt{\frac{3\psum n\pu}{L\psumsq}} \\
    & \leq 1536 \frac{M\delta nb^{7/2}(t)}{T}+ 64\tilde{C_2}b^{9/2}(t)\sqrt{\frac{3}{Ln\pu\psum}}.
\end{align*}

\subsection{Proof of Corollary \ref{cor:l2}}
Since $\psum \geq \pmin \delta$, therefore the condition $\delta \gtrsim \frac{\log n}{\pmin}$ implies $\psum \gtrsim \log n$, as well as $\pu \gtrsim 1$ (due to Proposition \ref{prop:pu}), thus satisfying the requirements of Theorem \ref{thm:l2_gnp}. Additionally, $\delta$ is required to satisfy $\delta \leq T$, and also condition \eqref{eq:cond_thm_gnp}, i.e., 
\begin{equation} \label{eq:cond_thm_gnp_tmp1}
    \sqrt{\frac{\log n}{Ln\delta\pmin}} + \frac{M\delta n}{T}\lesssim \frac{1}{b^{\frac{7}{2}}(t)}.
\end{equation}
If $\delta$ satisfies the three aforementioned conditions, and if $n \gtrsim \log n$, we have with probability at least $1-O(n^{-10})$ the $\ell_2$ bound
\begin{equation}\label{eq:cor1_bound_thm_2}
        \frac{\norm{\piest-\pistar}_2}{\norm{\pistar}_2} \lesssim \frac{M\delta nb^{\frac{7}{2}}(t)}{T} + \frac{b^{\frac{9}{2}}(t)}{\sqrt{Ln\delta\pmin}}.
\end{equation}
The optimal choice of $\delta \in (0,T]$  that minimizes the RHS of \eqref{eq:cor1_bound_thm_2} is easily verified to be
\begin{equation*}
    \delta^* = \min\set{\frac{(b(t)T)^{\frac{2}{3}}}{(2M)^{\frac{2}{3}}n(L\pmin)^{\frac{1}{3}}}, T}.
\end{equation*}
Now it remains to ensure that $\delta^*$ satisfies the previously stated conditions on $\delta$. Clearly $\delta^* \leq T$, and condition \eqref{eq:cond_thm_gnp_tmp1} are equivalent to the stated conditions on $T$ in the corollary. Hence provided $\delta \asymp \delta^*$, $T$ satisfies the stated conditions, and $n \gtrsim \log n$, we arrive at the stated $\ell_2$ bound in the corollary.
%
%

%
\section{$\ell_\infty$-analysis of the spectral estimator}\label{sec:analysis_linf}
The main goal of this section is to present the steps of the proof of Theorem \ref{thm:linf}. We will follow the steps of the analysis carried out by Chen et al. \citep{Chen_2019}, and adapt it to our setting. The proofs of all results from this section are provided in Appendix \ref{appsec:linf_analysis}. 

As $\piest$ and $\pistar$ are stationary distributions of the transition matrices $\Pest$ and $\Pbar$, then denoting $P_{.m}$ to be the $m^{th}$ column of a matrix $P$, it holds for all $m \in [n]$ that,
\begin{align*}
    \piestm-\pistarm & = \left(\piest^\top\Pest\right)_m - \left(\pistar^\top\Pbar\right)_m \\
    & = \piest^\top\est{P}_{.m}(t) - \pistar^\top\bar{P}_{.m}(t) \\
    & = \pistar^\top(\est{P}_{.m}(t) - \bar{P}_{.m}(t)) + (\piest - \pistar)^\top\est{P}_{.m}(t) \\
    & = \underset{:=I_0^m}{\underbrace{\pistar^\top(\est{P}_{.m}(t) - \est{P}^*_{.m}(t))}} + \underset{:=I_1^m}{\underbrace{\pistar^\top(\est{P}^*_{.m}(t) - \bar{P}_{.m}(t))}}+  \underset{:=I_2^m}{\underbrace{(\piestm - \pistarm)\est{P}_{mm}(t)}} \\
    & + \sum_{j:j\neq m} (\piestj-\pistarj)\Pestjm.\\
\end{align*}
Let us first focus on bounding $\abs{I_0^m}, \abs{I_1^m}$ and $\abs{I_2^m}$. The last term will be treated carefully due to the occurrence of some statistical dependencies therein. Since $\Peststar = \expec[\Pest]$, hence $\abs{I_0^m}$ can be bounded using Hoeffding's inequality.
\begin{lemma}\label{lem:I0}
Suppose that $n\pu \geq c_0\log n$ and $\psum \geq c_2 \log n$ for constants $c_0, c_2 \geq 1$ from Lemma \ref{lem:prop_gnp}. Then there exists a constant $C_1\geq 1$ such that with probability at least $1-O(n^{-9})$,
\begin{equation}
    \abs{I_0^m} \leq C_1\sqrt{\frac{\log n}{Ln\pu\psum}}\norm{\pistar}_\infty \quad \forall m \in [n].
\end{equation}
\end{lemma}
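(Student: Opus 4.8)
The plan is to bound $I_0^m = \pistar^\top(\est{P}_{.m}(t) - \est{P}^*_{.m}(t))$ entrywise, exploiting that $\est{P}^*(t) = \expec[\est{P}(t)]$ so that each coordinate of $\est{P}_{.m}(t) - \est{P}^*_{.m}(t)$ is a centered sum of independent bounded random variables over which Hoeffding's inequality applies. Writing $I_0^m = \sum_{i=1}^n \pistari (\Pestim - \Peststarim)$, the only nonzero terms are those with $i = m$ or $\{i,m\} \in \Eu$. For an off-diagonal edge $\{i,m\} \in \Eu$, $\Pestim - \Peststarim = \frac{1}{\du \cardNim} \sum_{t' \in \Nim} (y_{im}(t') - y^*_{im}(t'))$, which is a sum of $\cardNim \cdot L$ independent terms (recall $y_{im}(t') = \frac{1}{L}\sum_{l} y^{(l)}_{im}(t')$ with independent Bernoulli entries), each bounded in magnitude by $\frac{1}{\du \cardNim L}$. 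The diagonal term $\Pestmm - \Peststarmm$ is a similar centered sum over all neighbours $k \neq m$ of $m$ in $\Gu$.

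Next I would compute the variance proxy for the Hoeffding bound. Treating $I_0^m$ as a single linear combination of all the independent Bernoulli variables $\{y^{(l)}_{ik}(t')\}$ involved, the total "range-squared" sum is of order $\sum_{i \sim m} (\pistari + \pistarm)^2 \cdot \frac{1}{\du^2 \cardNim^2 L^2} \cdot \cardNim L \asymp \frac{\norm{\pistar}_\infty^2}{\du^2 L} \sum_{i \sim m} \frac{1}{\cardNim}$, since each off-diagonal edge contributes through both $I_0^m$'s diagonal and off-diagonal part with coefficients bounded by a constant times $\norm{\pistar}_\infty$. Using $\cardNim \geq \Nmin$ and $\dumax$ neighbours gives a variance proxy of order $\frac{\norm{\pistar}_\infty^2 \dumax}{\du^2 L \Nmin}$. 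Hoeffding then yields $\abs{I_0^m} \lesssim \norm{\pistar}_\infty \sqrt{\frac{\dumax \log n}{\du^2 L \Nmin}}$ with probability $1 - O(n^{-10})$ for a single $m$; a union bound over $m \in [n]$ costs one factor of $n$ in the probability, giving $1 - O(n^{-9})$. Finally I would invoke Lemma~\ref{lem:prop_gnp}: under $n\pu \geq c_0 \log n$ we have $\dumax \leq 3n\pu$ and $\du = 3n\pu$, while under $\psum \geq c_2 \log n$ we have $\Nmin \gtrsim \psum$; substituting, $\frac{\dumax}{\du^2 \Nmin} \lesssim \frac{1}{n\pu \psum}$, so $\abs{I_0^m} \lesssim \norm{\pistar}_\infty \sqrt{\frac{\log n}{L n \pu \psum}}$, which is the claimed bound with an appropriate constant $C_1 \geq 1$.

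The main obstacle is the bookkeeping for the variance proxy — correctly accounting for the fact that each edge $\{i,m\}$ enters $I_0^m$ through two channels (the off-diagonal entry $\Pestim$ weighted by $\pistari$, and the diagonal entry $\Pestmm$ weighted by $\pistarm$) but is driven by the \emph{same} underlying Bernoulli variables, so one must group terms by the independent source variables before applying Hoeffding rather than naively summing per-matrix-entry bounds. One also has to be slightly careful that the normalization $\du$ appears squared in the variance and only once effectively cancels against $\dumax$; this is exactly why the choice $\du \asymp n\pu$ (rather than merely $\du \geq \dumax$) is needed here, as flagged in the third remark after Theorem~\ref{thm:l2_gnp}. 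Everything else is a routine application of Hoeffding plus the Erd\H{o}s--R\'enyi concentration estimates already recorded in Lemma~\ref{lem:prop_gnp}.
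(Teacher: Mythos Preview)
Your proposal is correct and follows essentially the same route as the paper: the paper also groups the diagonal and off-diagonal contributions from each edge $\{j,m\}$ into a single coefficient $(\pistarj+\pistarm)$ before applying Hoeffding, then invokes Lemma~\ref{lem:prop_gnp} and a union bound over $m$. Your variance-proxy bookkeeping is in fact marginally tighter (you get $\dumax/(L\du^2\Nmin)$ whereas the paper picks up an extra harmless $\Nmax/\Nmin$ factor), and your aside that $\du\asymp n\pu$ is ``needed here'' is slightly misplaced---for $I_0^m$ a larger $\du$ only helps; the constraint on $\du$ really bites in $I_2^m$---but neither point affects correctness.
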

The second term $I_1^m$ can be bounded easily using Assumption \ref{assump:smooth1}. 
%
%
\begin{lemma}\label{lem:I1}
With probability at least $1-O(n^{-10})$, we have 
\begin{equation}
    \abs{I_1^m} \leq \frac{2M\delta}{T}\norm{\pistar}_\infty \quad \forall m \in [n].
\end{equation}
\end{lemma}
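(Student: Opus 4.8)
The quantity $I_1^m = \pistar^\top(\Peststardotm - \Pbardotm)$ is deterministic once the normalization $\du$ is fixed, so the plan is to estimate the $m$-th column of $\Peststar - \Pbar$ entrywise and then sum against $\pistar$. First I would record the entries. For $j \ne m$, both $\Peststarjm$ and $\Pbarjm$ vanish unless $\set{j,m}\in\Eu$, in which case $\Peststarjm = \tfrac{1}{\du}\,\bar y^*_{jm}(t)$ and $\Pbarjm = \tfrac{1}{\du}\,y^*_{jm}(t)$ with $\bar y^*_{jm}(t) = \tfrac{1}{\cardNjm}\sum_{t'\in\Njm} y^*_{jm}(t')$; and on the diagonal, $\Peststarmm - \Pbarmm = -\tfrac{1}{\du}\sum_{k}\big(\bar y^*_{mk}(t) - y^*_{mk}(t)\big)$, the sum ranging over the neighbours $k$ of $m$ in $\Gu$ (for both matrices the diagonal compensation runs over exactly these $k$).

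Next I would invoke the elementary averaging estimate already recorded in the text: since $\bar y^*_{ij}(t)$ is an average of the values $y^*_{ij}(t')$ over $t'\in\calN_{ij,\delta}(t)$, each of which satisfies $\abs{t-t'}\le\delta/T$, Assumption \ref{assump:smooth1} gives $\abs{\bar y^*_{ij}(t) - y^*_{ij}(t)}\le M\delta/T$ for every $\set{i,j}\in\Eu$. Hence $\abs{\Peststarjm - \Pbarjm}\le \tfrac{M\delta}{\du T}$ for the off-diagonal entries, while for the diagonal, summing over the at most $\dumax$ neighbours of $m$ yields $\abs{\Peststarmm - \Pbarmm}\le \tfrac{\dumax}{\du}\cdot\tfrac{M\delta}{T}\le \tfrac{M\delta}{T}$, where the last step uses $\du\ge\dumax$.

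Finally I would assemble these via the triangle inequality. Since only the diagonal entry and the (at most $\dumax$) neighbours of $m$ contribute,
\[
 \abs{I_1^m}\ \le\ \pistarm\,\abs{\Peststarmm - \Pbarmm}\ +\ \sum_{j:\set{j,m}\in\Eu}\pistarj\,\abs{\Peststarjm - \Pbarjm}\ \le\ \frac{M\delta}{T}\norm{\pistar}_\infty + \frac{\dumax}{\du}\cdot\frac{M\delta}{T}\norm{\pistar}_\infty\ \le\ \frac{2M\delta}{T}\norm{\pistar}_\infty,
\]
using $\sum_{j:\set{j,m}\in\Eu}\pistarj \le \dumax\norm{\pistar}_\infty$ (at most $\dumax$ summands) and $\dumax\le\du$. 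This bound is in fact deterministic; the probabilistic qualifier ($1-O(n^{-10})$) in the statement is needed only because, under the assumptions of Theorem \ref{thm:l2_gnp}, the chosen normalization $\du = 3n\pu$ dominates $\dumax$ solely with probability at least $1-O(n^{-10})$, by Lemma \ref{lem:prop_gnp}. I do not expect any genuine obstacle in this argument: the only points demanding mild care are the bookkeeping of the diagonal term of the column, and making sure the normalization $\du$ dominates the maximum degree so that the factor $\dumax/\du$ stays bounded by $1$ — both secured by $\du\ge\dumax$.
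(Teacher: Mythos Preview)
Your argument is correct and essentially identical to the paper's own proof: the paper also invokes the entrywise bound $\abs{\Peststarij-\Pbarij}\le M\delta/(T\du)$ off-diagonal and $\le M\delta/T$ on the diagonal (recorded from Lemma~\ref{lem:bounds_deterministic}), sums against $\pistar$ to get $\norm{\pistar}_\infty(\dumax \tfrac{M\delta}{T\du}+\tfrac{M\delta}{T})$, and then appeals to Lemma~\ref{lem:prop_gnp} for $\du=3n\pu\ge\dumax$ with probability $1-O(n^{-10})$. Your explicit remark that the bound is deterministic once $\du\ge\dumax$ is secured, with the probabilistic qualifier arising solely from the choice $\du=3n\pu$, is exactly the point.
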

The next lemma shows that with high probability, $\abs{I_2^m} \lesssim \norm{\piest-\pistar}_{\infty}$ for all $m \in [n]$.
\begin{lemma}\label{lem:I2}
Recall that $\bmax = \max_{t\in\Nt} b(t)$ and suppose that $n\pu \geq c_0\log n$ and $\psum \geq c_2 \log n$ for constants $c_0, c_2 \geq 1$ from Lemma \ref{lem:prop_gnp}. Then there exists a  constant $C_2 \geq 1$ such that it holds with probability at least $1-O(n^{-9})$ that 
\begin{equation}
    \abs{I_2^m} \leq \left(1-\frac{1}{12\bmax}+C_2\sqrt{\frac{\log n}{Ln\pu\psum}} \right)\norm{\piest-\pistar}_{\infty} \quad \forall m \in [n].
\end{equation} 
\end{lemma}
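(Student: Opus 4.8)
The plan is to exploit the product structure $I_2^m = (\piestm - \pistarm)\Pestmm$. Since $\du \geq \dumax$, the matrix $\Pest$ is a genuine stochastic matrix, so $\Pestmm \in [0,1]$ and hence $\abs{I_2^m} \leq \Pestmm\,\norm{\piest-\pistar}_\infty$. It therefore suffices to prove that, with probability $1-O(n^{-9})$, $\Pestmm \leq 1 - \frac{1}{12\bmax} + C_2\sqrt{\frac{\log n}{Ln\pu\psum}}$ simultaneously for all $m\in[n]$. I would obtain this by first bounding the conditional mean $\Peststarmm = \expec[\Pestmm]$, where the expectation is taken over the Bernoulli comparison outcomes conditionally on the graphs $(\calE_{t'})_{t'\in\Nt}$, and then showing that $\Pestmm$ concentrates around it.

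For the conditional mean, I would use that the BTL form gives, for every $t'\in\Nt$ and every edge $\{m,k\}\in\Eu$, $y^*_{mk}(t') = \frac{w^*_{t',k}}{w^*_{t',m}+w^*_{t',k}} \geq \frac{1}{1+b(t')} \geq \frac{1}{1+\bmax} \geq \frac{1}{2\bmax}$, the last step using $\bmax \geq 1$. Substituting this into the definition of $\Peststarmm$ and collapsing the inner average over $\calN_{mk,\delta}(t)$ (the sum over $k$ being effectively over the neighbours of $m$ in $\Gu$) gives $\Peststarmm \leq 1 - \frac{d^{(m)}}{2\bmax\du}$, where $d^{(m)}$ is the degree of $m$ in $\Gu$. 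On the high-probability event of Lemma \ref{lem:prop_gnp} one has $d^{(m)} \geq \dumin \geq \frac{n\pu}{2}$ while $\du = 3n\pu$, so $d^{(m)}/\du \geq 1/6$ and hence $\Peststarmm \leq 1 - \frac{1}{12\bmax}$.

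For the concentration, I would observe that conditionally on the graphs,
\[ \Pestmm - \Peststarmm = -\frac{1}{\du}\sum_{k\,:\,\{m,k\}\in\Eu}\frac{1}{\abs{\calN_{mk,\delta}(t)}}\sum_{t'\in\calN_{mk,\delta}(t)}\frac{1}{L}\sum_{l=1}^L\bigl(y_{mk}^{(l)}(t') - y^*_{mk}(t')\bigr), \]
which is a sum of independent, mean-zero random variables (the comparisons are independent across pairs $\{m,k\}$, across $t'$, and across $l$), each of magnitude at most $\frac{1}{\du\,\abs{\calN_{mk,\delta}(t)}\,L}$. The associated Hoeffding variance proxy is $\sum_{k}\abs{\calN_{mk,\delta}(t)}L\cdot\frac{1}{\du^2\abs{\calN_{mk,\delta}(t)}^2L^2} \leq \frac{d^{(m)}}{\du^2\Nmin L}$, and using $\du = 3n\pu$, $d^{(m)} \leq \dumax \leq \frac{3n\pu}{2}$, and $\Nmin \asymp \psum$ from Lemma \ref{lem:prop_gnp} (valid under the hypotheses $n\pu\gtrsim\log n$, $\psum\gtrsim\log n$), this proxy is $\lesssim \frac{1}{n\pu\psum L}$. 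Hoeffding's inequality then yields $\abs{\Pestmm - \Peststarmm} \leq C_2\sqrt{\frac{\log n}{Ln\pu\psum}}$ with probability at least $1-n^{-c}$ for an arbitrarily large constant $c$ (taking $C_2$ large), and a union bound over $m\in[n]$ intersected with the graph event of Lemma \ref{lem:prop_gnp} delivers the lemma with probability $1-O(n^{-9})$.

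I expect the main obstacle to be essentially bookkeeping rather than conceptual: precisely specifying which source of randomness $\Peststar$ conditions on, verifying the mutual independence of the $y_{mk}^{(l)}(t')$ entering the diagonal entry, and calibrating the variance-proxy estimate together with the constants so that all the simultaneous invocations of Lemma \ref{lem:prop_gnp} and the Hoeffding union bound fit inside the stated $O(n^{-9})$ failure probability while the deviation lands exactly on the advertised $\sqrt{\log n/(Ln\pu\psum)}$ scale.
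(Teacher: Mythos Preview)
Your proposal is correct and follows essentially the same route as the paper: decompose $\Pestmm$ into its conditional mean $\Peststarmm$ plus a fluctuation, bound $\Peststarmm \leq 1-\frac{\dumin}{2\bmax\du} \leq 1-\frac{1}{12\bmax}$ via the BTL lower bound $y^*_{mk}(t')\geq\frac{1}{2\bmax}$ and Lemma~\ref{lem:prop_gnp}, and control the fluctuation by Hoeffding's inequality followed by a union bound over $m$. The only cosmetic difference is that your Hoeffding variance proxy $\frac{d^{(m)}}{\du^2\Nmin L}$ is slightly sharper than the paper's $\frac{\Nmax\dumax}{L\dusq\Nminsq}$, but the two agree up to the $\Nmax/\Nmin=O(1)$ factor from Lemma~\ref{lem:prop_gnp} and yield the same final scale.
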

The last term to bound is more difficult to handle due to the statistical dependency between $\piest$ and $\Pest$. The idea is then to use the same ``leave-one-out'' trick as in \citep{Chen_2019} and introduce a new matrix $\Pmest$ with entries given by (for all $i\neq j$) 
\begin{equation*}
    \Pmestij = \left\lbrace \begin{array}{cc}
        \Pestij & \text{if }i\neq m, j\neq m \\
        \frac{\pu}{\du}\frac{w_{t,j}^*}{w_{t,j}^*+w_{t,i}^*} & \text{if }i= m \text{ or } j= m.
    \end{array} \right.
\end{equation*}
To ensure that $\Pmest$ is a transition matrix, its diagonal entries are defined as 
\begin{equation*}
    \Pmestii =  1 - \displaystyle\sum_{k\neq i}\Pmestik.
\end{equation*}
Here, the $m^{th}$ line and column of $\Pest$ have been replaced by their expectation, unconditionally of the union graph $\Gu$. 
Let us denote $\pimest$ to be the leading left eigenvector of $\Pmest$. This vector is now statistically independent of the connectivity and the pairwise comparison outputs involving the $m^{th}$ item, and one can reasonably expect that it is close to $\pistar$. Hence, we can decompose the last term as
\begin{equation*}
    \sum_{j:j\neq m}\left(\piestj-\pistarj\right)\Pestjm = \underset{:= I_3^m}{\underbrace{\sum_{j:j\neq m}\left(\piestj-\pimestj\right)\Pestjm}} + \underset{:= I_4^m}{\underbrace{\sum_{j:j\neq m}\left(\pimestj-\pistarj\right)\Pestjm}}. 
\end{equation*}
To bound $\abs{I_3^m}$, note that the  Cauchy-Schwarz inequality implies 
\begin{equation*}
    \abs{I_3^m} \leq \sqrt{\sum_{j:j\neq m} \Pestjm^2}\norm{\piest-\pimest}_2.
\end{equation*}
Since for all $j \neq m$, $\Pestjm \leq \frac{1}{\du}$, hence $\abs{I_3^m} \leq \frac{\sqrt{\dumax}}{\du} \norm{\piest-\pimest}_2$. The important step now is to bound $\norm{\piest-\pimest}_2$ since $\dumax \leq \du$ w.h.p (when $G_{t'} \sim \calG(n,p(t'))$ for all  $t' \in \Nt$) due to Lemma \ref{lem:prop_gnp}. This is shown in the following lemma.
\begin{lemma}\label{lem:I3}
Suppose that $n\pu \geq c_0\log n$ and $\psum \geq c_2 \log n$ for constants $c_0, c_2 \geq 1$ from Lemma \ref{lem:prop_gnp}. Then there exist  constants $C_3, C_6 \geq 1$ such that if
\begin{equation}\label{eq:cond_lem9}
    96b^{\frac{5}{2}}(t) \left(\frac{4M\delta}{T}+C_3\sqrt{\frac{\log n}{n\pu}} \right) \leq \frac{1}{2}, 
\end{equation}
then it holds with probability at least $1-O(n^{-9})$ that for all $m \in [n]$,
\begin{equation*}
    \norm{\pimest-\piest}_2 \leq 192b^{\frac{5}{2}}(t)\left(\frac{4M\delta}{T} + C_3\sqrt{\frac{\log n}{Ln\pu\psum}} \right)\norm{\pistar}_{\infty}+\norm{\piest-\pistar}_{\infty}.
\end{equation*}
Consequently, we have with probability at least $1-O(n^{-9})$ that for all $m \in [n]$,
\begin{equation}
    \abs{I_3^m} \leq 192\frac{b^{\frac{5}{2}}(t)}{\sqrt{3n\pu}}\left(\frac{4M\delta}{T} + C_3\sqrt{\frac{\log n}{Ln\pu\psum}} \right)\norm{\pistar}_{\infty}+\frac{1}{\sqrt{3n\pu}}\norm{\piest-\pistar}_{\infty}.
\end{equation}
\end{lemma}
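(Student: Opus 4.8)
The strategy is the ``leave-one-out'' argument of Chen et al.\ \cite{Chen_2019}, adapted to the neighborhood-averaged setting. The bound on $\abs{I_3^m}$ is immediate once $\norm{\piest-\pimest}_2$ is controlled, via $\abs{I_3^m}\le\frac{\sqrt{\dumax}}{\du}\norm{\piest-\pimest}_2$ and the fact that w.h.p.\ $\dumax\le 3n\pu=\du$ by Lemma \ref{lem:prop_gnp}; so it suffices to bound $\norm{\piest-\pimest}_2$ and then union bound over $m\in[n]$. The role of $\Pmest$ is that $\Pest$ and $\Pmest$ coincide off the $m$-th row and column, and $\pimest$, restricted to $[n]\setminus\set{m}$, is a function only of the comparison data not involving item $m$; hence it is statistically independent of the $m$-th column of $\Pest$, and this is what lets us control the delicate part of the perturbation by concentration.

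Concretely, from $\piest^\top=\piest^\top\Pest$ and $\pimest^\top=\pimest^\top\Pmest$ I would derive the perturbation identity $(\piest-\pimest)^\top(\matI-\Pest)=\pimest^\top(\Pest-\Pmest)$. Since $\Pbar$ is reversible with respect to $\pistar$, the spectral-gap estimate behind Lemma \ref{lem3}/\eqref{eq:thm8bd_chen} applies: under the Erd\H{o}s--R\'enyi simplifications ($\xid\ge\tfrac12$, $\dumin\ge\tfrac{n\pu}{2}$, $\du=3n\pu$) and provided $\norm{\Pest-\Pbar}_{\pistar}$ is small enough — which, via Lemmas \ref{lem:bounds_deterministic}, \ref{lem:bound_delta}, \ref{lem:prop_gnp} together with \eqref{eq:cond_lem9}, holds w.h.p. — one gets $1-\lammax(\Pbar)-\norm{\Pest-\Pbar}_{\pistar}\gtrsim b^{-3}(t)$, whence
\[
  \norm{\piest-\pimest}_{\pistar}\le\frac{\norm{\pimest^\top(\Pest-\Pmest)}_{\pistar}}{1-\lammax(\Pbar)-\norm{\Pest-\Pbar}_{\pistar}}\lesssim b^{3}(t)\,\norm{\pimest^\top(\Pest-\Pmest)}_{\pistar};
\]
passing to Euclidean norms via \eqref{eq:bound_pinorm} (cost $\sqrt{\pistarmax/\pistarmin}=\sqrt{b(t)}$) gives $\norm{\piest-\pimest}_2\lesssim b^{5/2}(t)\,\norm{\pimest^\top(\Pest-\Pmest)}_2$, the source of the $b^{5/2}(t)$ prefactor.

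It then remains to bound $\norm{\pimest^\top(\Pest-\Pmest)}_2$. Since $\Pest-\Pmest$ is supported on row $m$, column $m$, and the induced diagonal correction, $\pimest^\top(\Pest-\Pmest)$ has coordinate-$m$ entry $\sum_{j}\pimestj(\Pestjm-\est{P}^{(m)}_{jm}(t))+\pimestm(\Pestmm-\est{P}^{(m)}_{mm}(t))$ and, for $k\ne m$, entry $\pimestm(\Pestmk-\est{P}^{(m)}_{mk}(t))-\pimestk(\Pestkm-\est{P}^{(m)}_{km}(t))$. I would bound each such $\Pest_\bullet-\est{P}^{(m)}_\bullet(t)$ by splitting it into a graph-fluctuation part (from $\mathbb{1}[\cdot\in\Eu]-\pu$), a comparison-fluctuation part which — conditioning on $\Gu$ — is an average of $\asymp L\psum$ independent bounded variables of variance $\lesssim(L\du^2\psum)^{-1}$, and a bias part of size $\lesssim\tfrac{\pu}{\du}\cdot\tfrac{M\delta}{T}$ (Assumption \ref{assump:smooth1}); this produces the $\sqrt{\log n/(Ln\pu\psum)}$ and $M\delta/T$ terms. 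For the coordinate-$m$ sum $\sum_{j\ne m}\pimestj(\Pestjm-\cdots)$ the leave-one-out independence lets us apply Bernstein/Hoeffding with $\set{\pimestj}_{j\ne m}$ treated as fixed, while the row-$m$ and diagonal parts are handled by elementary sums, using that $\sum_k(\Pestmk-\cdots)^2$ and $\sum_k(\Pestkm-\cdots)^2$ concentrate. All auxiliary $\pimest$-quantities ($\pimestm$, $\norm{\pimest}_2$, $\norm{\pimest}_\infty$) are replaced, via $\pimestj=\pistarj+(\pimestj-\piestj)+(\piestj-\pistarj)$ with $\abs{\pimestj-\piestj}\le\norm{\pimest-\piest}_2$ and $\abs{\piestj-\pistarj}\le\norm{\piest-\pistar}_\infty$, by $\pistar$-quantities plus $\norm{\pimest-\piest}_2$ and $\norm{\piest-\pistar}_\infty$. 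Collecting everything yields a self-referential inequality $\norm{\pimest-\piest}_2\le A+B\,\norm{\pimest-\piest}_2+\norm{\piest-\pistar}_\infty$ with $A\asymp b^{5/2}(t)\bigl(\tfrac{M\delta}{T}+\sqrt{\tfrac{\log n}{Ln\pu\psum}}\bigr)\norm{\pistar}_\infty$ and $B<1$ forced by \eqref{eq:cond_lem9}; solving for $\norm{\pimest-\piest}_2$ and tracking constants gives the factor $192$ and the coefficient $1$ on $\norm{\piest-\pistar}_\infty$. Plugging into $\abs{I_3^m}\le\frac{\sqrt{\dumax}}{\du}\norm{\piest-\pimest}_2\le\frac{1}{\sqrt{3n\pu}}\norm{\piest-\pimest}_2$ and union-bounding over $m$ finishes.

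The main obstacle is the bookkeeping around the leave-one-out construction: (i) making precise that $\pimest|_{[n]\setminus\set{m}}$ is independent of the $m$-th column data of $\Pest$ (exactly what $\Pmest$ is designed for, but which must be stated carefully), (ii) obtaining the concentration bounds uniformly over all $n$ choices of $m$ at the price of only $\log n$ factors, and (iii) keeping the self-referential inequality genuinely contractive (i.e.\ $B$ bounded away from $1$) — which is precisely what \eqref{eq:cond_lem9} is calibrated for — so that it inverts without blowing up the constants. A secondary subtlety is cleanly separating the graph randomness (costing only $\sqrt{\log n/(n\pu)}$, as reflected in \eqref{eq:cond_lem9}) from the comparison randomness (which benefits from the $L\psum$-fold averaging, giving $\sqrt{\log n/(Ln\pu\psum)}$) without double counting.
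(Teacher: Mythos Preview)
Your proposal is correct and follows essentially the same route as the paper. The one mechanism you allude to (your ``secondary subtlety'') but do not name explicitly is this: the paper introduces an intermediate matrix $\Pmg$ (equal to $\Pest$ off the $m$-th row/column, equal to $\Pbar$ on them) and splits $\pimest^\top(\Pest-\Pmest)=J_1^m+J_2^m$ with $J_1^m=\pimest^\top(\Pest-\Pmg)$ (comparison noise $+$ bias, conditional on $\Gu$) and $J_2^m=\pimest^\top(\Pmg-\Pmest)$ (pure graph randomness); the crucial observation is the detailed-balance identity $\pistar^\top(\Pmg-\Pmest)=0$, so that $J_2^m=(\pimest-\pistar)^\top(\Pmg-\Pmest)$ and the $\sqrt{\log n/(n\pu)}$ cost from graph fluctuations multiplies only $\norm{\pimest-\pistar}_\infty$ (feeding into your $B$), never $\norm{\pistar}_\infty$ (your $A$). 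Everything else --- the application of Theorem \ref{thm:thm8} plus \eqref{eq:gamma} to get the $b^{5/2}(t)$ prefactor, the Hoeffding bounds on the comparison noise under the leave-one-out conditioning, and the closure of the self-referential inequality under \eqref{eq:cond_lem9} --- matches the paper.
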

Finally, we can bound $\abs{I_4^m}$ using the statistical independence between $\pimest$ and $\est{P}_{.m}(t), \est{P}_{m.}(t)$.
\begin{lemma}\label{lem:I4}
Suppose that $n\pu \geq c_0\log n$ and $\psum \geq c_2 \log n$ for constants $c_0, c_2 \geq 1$ from Lemma \ref{lem:prop_gnp}; $n\geq c_1\log n$ for the constant $c_1 > 0$ in Theorem \ref{thm:l2}, and that condition \eqref{eq:cond_thm_gnp} of Theorem \ref{thm:l2_gnp} holds. Then there exist  constants $C_7,C_8,C_9 \geq 1$ such that  with probability at least $1-O(n^{-9})$, we have for all $m \in [n]$,
\begin{align*}
    \abs{I_4^m} \leq & \left(C_7\frac{Mn\delta b^{\frac{7}{2}}(t)}{T} + C_8 \frac{b^{\frac{5}{2}}(t)\max\set{b^2(t),\frac{\log n}{\sqrt{n\pu}}}}{\sqrt{Ln\pu\psum}}\right)\norm{\pistar}_{\infty} + C_9\sqrt{\frac{\log n}{n\pu}}\norm{\piest-\pistar}_{\infty}.
\end{align*}
\end{lemma}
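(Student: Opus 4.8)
The plan is to adapt the leave-one-out argument of \cite{Chen_2019}. The key structural fact is that $\pimest$, the leading left eigenvector of $\Pmest$ — whose $m$-th row and column are frozen to \emph{unconditional} expectations — is a function of the pairwise data not involving item $m$, hence statistically independent of $\set{\Pestjm}_{j\neq m}$; and since $\du = 3n\pu$ is deterministic, $\Pestjm = \frac{\bar y_{jm}(t)}{\du}\indic{\{j,m\}\in\Eu}$ satisfies $0 \leq \expec[\Pestjm] \leq \frac{\pu}{\du}$ (within $\frac{\pu M\delta}{\du T}$ of $\Pmestjm$ by Assumption~\ref{assump:smooth1}) and $\mathrm{Var}(\Pestjm) = \frac{\pu}{\du^2}\bigl(\mathrm{Var}(\bar y_{jm}(t)\mid\{j,m\}\in\Eu) + (1-\pu)(\expec[\bar y_{jm}(t)\mid\{j,m\}\in\Eu])^2\bigr) \lesssim \frac{\pu}{\du^2}$, where $\expec$ is unconditional and the edge-indicator fluctuation dominates. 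I would split
\begin{equation*}
  I_4^m = \underbrace{\sum_{j\neq m}(\pimestj - \pistarj)\,\expec[\Pestjm]}_{=:\,I_{4,1}^m} \;+\; \underbrace{\sum_{j\neq m}(\pimestj - \pistarj)\bigl(\Pestjm - \expec[\Pestjm]\bigr)}_{=:\,I_{4,2}^m}
\end{equation*}
and bound the two pieces separately.

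The crucial point for $I_{4,1}^m$ is to estimate it through the $\ell_1$-norm rather than the $\ell_\infty$-norm: since $0 \leq \expec[\Pestjm] \leq \frac{\pu}{\du}$,
\begin{equation*}
  \abs{I_{4,1}^m} \;\leq\; \frac{\pu}{\du}\,\norm{\pimest - \pistar}_1 \;\leq\; \frac{\pu\sqrt n}{\du}\,\norm{\pimest - \pistar}_2 \;=\; \frac{1}{3\sqrt n}\,\norm{\pimest - \pistar}_2 .
\end{equation*}
The naive identity $I_{4,1}^m \approx (\pimestm - \pistarm)(1-\Pmestmm)$ would instead yield only a $\Theta(1)$ multiple of $\norm{\pimest - \pistar}_\infty$, which together with the coefficient $1 - \frac{1}{12\bmax}$ coming from $I_2^m$ would push the total self-referential coefficient above $1$; the gain $\pu\sqrt n/\du = 1/(3\sqrt n)$ is exactly what rescues the argument, and is the reason $\du$ is taken proportional to $n\pu$ rather than to $\dumax$. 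For $I_{4,2}^m$ I would condition on $\pimest$: it is then a sum of $n-1$ independent, mean-zero variables, each of magnitude at most $\norm{\pimest - \pistar}_\infty/\du$ and with variances summing to at most $\max_j\mathrm{Var}(\Pestjm)\cdot\norm{\pimest - \pistar}_2^2 \lesssim \frac{\pu}{\du^2}\norm{\pimest - \pistar}_2^2$. Bernstein's inequality applied conditionally — hence valid for every realization of $\pimest$, and thus unconditionally — together with a union bound over $m$, yields on an event of probability $\geq 1 - O(n^{-9})$ that
\begin{equation*}
  \abs{I_{4,2}^m} \;\lesssim\; \frac{\sqrt{\log n}}{n\sqrt{\pu}}\,\norm{\pimest - \pistar}_2 \;+\; \frac{\log n}{n\pu}\,\norm{\pimest - \pistar}_\infty , \qquad \forall\, m \in [n] .
\end{equation*}

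To finish, I would substitute $\norm{\pimest - \pistar}_2 \leq \norm{\pimest - \piest}_2 + \norm{\piest - \pistar}_2$ and $\norm{\pimest - \pistar}_\infty \leq \norm{\pimest - \piest}_2 + \norm{\piest - \pistar}_\infty$, plug in the bound on $\norm{\pimest - \piest}_2$ from Lemma~\ref{lem:I3}, the $\ell_2$ bound of Theorem~\ref{thm:l2_gnp} (and $\norm{\pistar}_2 \leq \sqrt n\,\norm{\pistar}_\infty$), and the concentration estimates of Lemma~\ref{lem:prop_gnp}, and then check term by term that every resulting coefficient of $\norm{\pistar}_\infty$ is $\lesssim \frac{Mn\delta b^{7/2}(t)}{T} + \frac{b^{5/2}(t)\max\set{b^2(t),\,\log n/\sqrt{n\pu}}}{\sqrt{Ln\pu\psum}}$ and every coefficient of $\norm{\piest - \pistar}_\infty$ is $\lesssim \sqrt{\log n/(n\pu)}$. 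The hypotheses $n\pu \geq c_0\log n$, $\psum \geq c_2\log n$ and condition~\eqref{eq:cond_thm_gnp} (which forces $M\delta/T \lesssim 1$) are precisely what absorbs the stray factors of $\log n$, $\pu$ and $M\delta/T$ in this bookkeeping — for instance $1/\sqrt n \leq \sqrt{\log n/(n\pu)}$ since $\pu \leq 1 \leq \log n$, and $\sqrt{n\log n/\pu} \leq n/\sqrt{c_0}$ since $n\pu \geq c_0\log n$. I expect $I_{4,2}^m$ to be the main obstacle: one must arrange the conditioning so that the Bernstein variance proxy scales as $\norm{\pimest - \pistar}_2^2$ rather than $n\,\norm{\pimest - \pistar}_\infty^2$, and then propagate the $\ell_2$ bound of Theorem~\ref{thm:l2_gnp} and the coupling bound of Lemma~\ref{lem:I3} through every term while verifying that the self-referential coefficient of $\norm{\piest - \pistar}_\infty$ remains $o(1)$.
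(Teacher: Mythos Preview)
Your proposal is correct and follows the same overall strategy as the paper: exploit the independence of $\pimest$ from the $m$-th column of $\Pest$, extract the crucial $\frac{1}{3\sqrt n}$ factor by passing to an $\ell_1/\ell_2$ bound on the mean part, control the fluctuation by Bernstein, and then feed in Lemma~\ref{lem:I3} and Theorem~\ref{thm:l2_gnp}. The organization differs slightly. The paper first discards the comparison data by the crude bound $\abs{I_4^m}\leq \frac{\Nmax}{\Nmin}\sum_{j\neq m}\frac{1}{\du}\abs{\pimestj-\pistarj}\indic{\{j,m\}\in\Eu}=:\frac{\Nmax}{\Nmin}I_5^m$, then conditions on $(\Gum,\tilde y)$ so that the only remaining randomness is the edge indicator $\indic{\{j,m\}\in\Eu}$; its Bernstein step therefore uses the coarser variance proxy $n\pu\,\norm{\pimest-\pistar}_\infty^2/\du^2$. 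You instead keep $I_4^m$ intact, split off the \emph{unconditional} expectation, and apply Bernstein to $\Pestjm-\expec[\Pestjm]$ with the sharper variance proxy $\frac{\pu}{\du^2}\norm{\pimest-\pistar}_2^2$. Your route is a little cleaner (no auxiliary $\tilde y$ variables, no preliminary passage to $I_5^m$) and gives a nominally tighter deviation term, but after substituting the $\ell_2$ bound both arguments collapse to the same estimate; the paper's detour buys only notational simplicity in the Bernstein application.
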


%
\subsection{Proof of Theorem \ref{thm:linf}}
As seen in Section \ref{sec:analysis_linf}, the bound in  Theorem \ref{thm:linf} follows from the combination of the bounds on $I_0^m,I_1^m,I_2^m,I_3^m,$ and $I_4^m$. Note that we can identify two types of terms in these bounds -- those depending on $\norm{\pistar}_{\infty}$ and the ones which depend on $\norm{\piest-\pistar}_{\infty}$. Then, our bound can be written as 
\begin{equation*}
    \norm{\piest-\pistar}_{\infty} \leq \alpha\norm{\pistar}_{\infty} + \beta\norm{\piest-\pistar}_{\infty}
\end{equation*}
where $\beta < 1$, which in turn implies  $\frac{\norm{\piest-\pistar}_{\infty}}{\norm{\pistar}_{\infty}} \leq \frac{\alpha}{1-\beta}$.

We can lower bound the term $1-\beta$ as
\begin{align}\label{eq:bound_beta}
    1-\beta = & 1 - \left(1-\frac{1}{12\bmax}+C_2\sqrt{\frac{\log n}{Ln\pu\psum}} + \frac{1}{\sqrt{n\pu}} + C_9\sqrt{\frac{\log n}{n\pu}}\right) \nonumber\\
    = & \frac{1}{12\bmax}-C_2\sqrt{\frac{\log n}{Ln\pu\psum}} - \frac{1}{\sqrt{n\pu}} - C_9\sqrt{\frac{\log n}{n\pu} \nonumber}\\
    \geq & \frac{1}{12\bmax}-\tilde{C}\sqrt{\frac{\log n}{n\pu}} \quad (\text{ since } \psum \geq \log n)
\end{align}
for some constant $\tilde{C} \geq 1$.

Concerning the terms in $\alpha$, one can divide them in two groups depending on whether they depend on $T$ or not. The sum of the terms depending on $T$ is 
\begin{align*}
    \frac{2M\delta}{T} + 768\frac{M\delta b^{\frac{5}{2}}(t)}{T\sqrt{3n\pu}} + C_7\frac{Mn\delta b^{\frac{7}{2}}(t)}{T} \leq \tilde{C_6}\frac{Mn\delta b^{\frac{7}{2}}(t)}{T}
\end{align*}
for some constant $\tilde{C_6} \geq 1$.
Furthermore, the sum of the terms which are independent of $T$ is 
%
%
\begin{align*}
    &C_1\sqrt{\frac{\log n}{Ln\pu\psum}} + 192C_3b^{\frac{5}{2}}(t)\sqrt{\frac{\log n}{3L\psum n^2p^2_\delta(t)}} +  C_8 \frac{b^{\frac{5}{2}}(t)\max\set{b^2(t),\frac{\log n}{\sqrt{n\pu}}}}{\sqrt{Ln\pu\psum}} \\ 
    &\leq \left(C_1 + 192 C_3 \frac{b^{\frac{5}{2}}(t)}{\sqrt{3n\pu}} + C_8 \frac{b^{\frac{5}{2}}(t)}{\sqrt{\log n}}\max\set{b^2(t),\frac{\log n}{\sqrt{n\pu}}} \right)\sqrt{\frac{\log n}{Ln\pu\psum}} \\
    &\leq \tilde{C_5} \underbrace{\left(1 + \frac{b^{\frac{5}{2}}(t)}{\sqrt{\log n}}\max\set{b^2(t),\frac{\log n}{\sqrt{n\pu}}}  \right)}_{=: \gamma_{\delta,n}(t)}  \sqrt{\frac{\log n}{Ln\pu\psum}} \quad (\text{ since } n\pu \geq \log n)
\end{align*}
for some constant $\tilde{C_5} \geq 1$. 
Thus we arrive at the bound
\begin{equation}\label{eq:bound_alpha}
    \alpha \leq \tilde{C_5}\gamma_{n,\delta}(t)\sqrt{\frac{\log n}{Ln\pu\psum}} + \tilde{C_6}\frac{Mn\delta b^{\frac{7}{2}}(t)}{T}.
\end{equation}

Combining \eqref{eq:bound_beta} and \eqref{eq:bound_alpha}, we readily arrive at the stated bound in Theorem \ref{thm:linf}.
%
%
%
\subsection{Proof of Corollary \ref{cor:linf}}
As in the proof of Corollary \ref{cor:l2}, since $\psum \geq \pmin\delta$, the condition $\delta \gtrsim \frac{\log n}{\pmin}$ implies $\psum \gtrsim \log n$ and $\pu \gtrsim 1$. Moreover, $\delta$ has to statisfy $\delta \leq T$ and condition \eqref{eq:cond_thm3}, i.e.
\begin{equation*}
    \frac{M\delta}{T}+\sqrt{\frac{\log n}{n}} \lesssim \frac{1}{b^{\frac{5}{2}}(t)}.
\end{equation*}

If $\delta$ satisfies these assumptions and if $n \gtrsim \log n$, we have with probability at least $1-O(n^{-9})$,
\begin{equation}\label{eq:bound_cor_inf}
    \frac{\norm{\piest-\pistar}_\infty}{\norm{\pistar}_\infty} \lesssim \frac{b_{\max}}{1-b_{\max}\sqrt{\frac{\log n}{n}}}\left( \gamma_n(t) \sqrt{\frac{\log n}{Ln\delta\pmin}} + \frac{Mn\delta b^{\frac{7}{2}}(t)}{T}\right).
\end{equation}
%
%
The optimal choice of $\delta \in (0,T]$ minimizing the RHS of \eqref{eq:bound_cor_inf} is given by
\begin{equation*}
    \delta^* = \min\set{\frac{(\gamma_n(t) T)^{\frac{2}{3}}(\log n)^{\frac{1}{3}}}{(2M)^{\frac{2}{3}}nb^{\frac{7}{3}}(t)(L\pmin)^{\frac{1}{3}}} , T}.
\end{equation*}
It now remains to check that $\delta^*$ satisfies the aforementioned conditions on $\delta$. Clearly $\delta^* \leq T$ while $\delta^* \gtrsim \frac{\log n}{\pmin}$ and condition \eqref{eq:cond_thm3} are ensured for the stated conditions on $T$. Hence, for $\delta \asymp \delta^*$, if $T$ satisfies the stated conditions and $n\gtrsim\log n$, we arrive at the stated expression for the $\ell_\infty$ error bound in the corollary.

\section{Experiments}\label{sec:experiments}
We now empirically evaluate the performance of our method via numerical experiments\footnote{Code available at : \url{https://github.com/karle-eglantine/Dynamic_Rank_Centrality}} on synthetic data, and on a real dataset. We will in particular compare our method with the MLE approach of Bong et al. \citep{bong2020nonparametric}, as it is for now the only other method we are aware of that theoretically analyzes a `smoothly evolving' dynamic BTL model as us.
\subsection{Synthetic data}
The synthetic data is generated as follows.
\begin{enumerate}
    \item For $i \in [n]$, we simulate the strength of item $i$ across the grid $w^*_i \in \matR^{|\calT|}$ using the same gaussian process $GP(\mu_i,\Sigma_i)$ as in \citep{bong2020nonparametric}.
    \begin{itemize}
        \item $\mu_i = (\mu_i(0), \dots ,\mu_i(T))$ with $\mu_i(t)\sim \calN(0,0.1)$ for all $t\in\calT$, and 
        
        \item $\Sigma_i$ is a Toeplitz symmetric matrix, with its coefficients in the first row defined as $\Sigma_{i,1t} = 1-\frac{t}{T+1}$.
    \end{itemize}
    We then define $w^*_i = \exp\left(GP(\mu_i,\Sigma_i)\right) \in \matR^{|\calT|}$ for each $i \in [n]$.
    
    \item For all $t\in\calT$, we simulate an Erdös-Renyi comparison graph $G(n,p(t))$ with $p(t)$ chosen randomly from the interval $[\frac{1}{n}, \frac{\log n}{n}]$. We check that the union graph of all the data on the grid $\calT$ is connected. Indeed, it is a sufficient condition for the existence at all times $t \in [0,1]$ of a $\delta$ such that the union graph $\Gu$ is connected (which is required for the ranking recovery).
    
    \item For all $t\in\calT$, for all $1\leq i<j \leq n$ and for all $l\in[L]$, we draw the outcomes of the comparisons as $y_{ij}^{(l)}(t) \sim \calB\left(\frac{w^*_{t,j}}{w^*_{t,i}+w^*_{t,j}}\right)$ and define $y_{ji}^{(l)}(t) = 1-y_{ij}^{(l)}(t)$.
\end{enumerate}
    Starting with an initial value of $\delta$ as in Corollary \ref{cor:l2} ($\delta \approx T^{\frac{2}{3}}$), we increase its value till the union graph $\Gu$ is connected. We then recover the weight vectors $w^*_t$ for all $t \in \calT$ using Algorithm \ref{alg:spectral_algo}. This process is repeated over \rev{$60$} Monte Carlo runs.

Apart from the MLE approach \citep{bong2020nonparametric}, we also evaluate against an adaptation of the simple Borda Count method from the static setting to the dynamic setup. \rev{Let us describe briefly those two methods.}

\paragraph{\rev{MLE method.}} \rev{The analysis of Bong et al \citep{bong2020nonparametric} relies on a kernel smoothing of the data followed by a maximum likelihood estimation. We choose the bandwidth parameter $h= T^{-3/4}$ for the kernel smoothing, as suggested in their synthetic experiments\footnote{see their GitHub repository \url{https://github.com/shamindras/bttv-aistats2020}}. An alternative is to use a cross-validation procedure to tune $h$, but we avoid this method for computational reasons (see Tables \ref{tab:run_time_N100}, \ref{tab:run_time_N400}).}

\paragraph{Borda Count.} This method, analysed by Ammar and Shah \citep{ammar2011ranking} in the static case, gives a score to each item based on its win rate. To estimate the scores at time $t$ in our dynamic setup, we compute the win rate of each item $i$ using the neighborhood $\Nt$ as
\begin{equation*}
    s(t,i) = \frac{\sum_{t'\in\Nt}\text{Number of times }i \text{ has won at time } t'}{\sum_{t'\in\Nt}\text{Number of times }i \text{ has been compared at time } t'}.
\end{equation*}
The scores $(s(t,i))_{i \in [n]}$ then yield a ranking of the items at time $t$ (the higher the winrate for an item, the better its rank). \rev{Note that the neighborhood used to compute these scores is the same neighborhood used in the DRC estimation. Hence, the parameter $\delta$ is chosen as its theoretical optimal value $\delta^* \simeq T^{2/3}$.}

\paragraph{Ranking error.} In order to compare the rankings produced by these three methods, we compute for all them an estimation error with the error metric defined by Negahban \citep{negahban2015rank}. Let $\pi^* = \frac{w^*}{\norm{w^*}_1}$ denote the normalized true weight vector and $\sigma$ denote an estimated ranking, with $\sigma_i<\sigma_j \Leftrightarrow i$ is better than $j$. The error metric is then defined\footnote{The quantities $\pi^*$, $\sigma$ and $\est{\pi}$ are obviously defined at the time instant $t$ where the estimation is being carried out, however we suppress the dependence on $t$ for ease of exposition.} as follows.
\begin{equation*}
    D_{\pi^*}(\sigma) = \sqrt{\frac{1}{2n\norm{\pi^*}_2^2}\sum_{i<j}(\pi^*_i-\pi^*_j)^2 \mathbf{1}_{(\pi^*_i-\pi^*_j)(\sigma_i-\sigma_j)>0}}.
\end{equation*}
It has been shown in \citep[Lemma 1]{negahban2015rank} that this error criterion is related to the $\ell_2$ error that we have bounded in Theorem \ref{thm:l2}. If the ranking $\sigma$ comes from a weight vector $\est{\pi}$, then 
\begin{equation}\label{eq:bound_metric}
    D_{\pi^*}(\sigma) \leq \frac{\norm{\pi^*-\est{\pi}}_2}{\norm{\pi^*}_2}.
\end{equation}
Although \eqref{eq:bound_metric} doesn't necessarily require $\est{\pi}$ to satisfy $\norm{\est{\pi}}_1 = 1$, we will impose this in what follows since $\est{\pi}$ will be the strength estimates returned by the methods being compared.
\paragraph{\rev{Results.}} \rev{The results are summarized below.}
\begin{enumerate}
\item \rev{In Figure \ref{fig:plots_mse}, we consider $T$ ranging from $10$ to $150$, fix $L = 5$ and take $n=100$ or $n=400$. We plot the mean $\ell_2$ error $\norm{\pi^*-\est{\pi}}_2$ versus $T$ for our Algorithm \ref{alg:spectral_algo} (dubbed DRC for Dynamic Rank Centrality), as well as the MLE method (since Borda Count is not designed for recovering the latent weight vector $w^*$). One can observe that the $\ell_2$ error decreases with $T$ for both DRC and MLE and they achieve similar performance, which is consistent with the theoretical results developed in the present paper, and in \citep{bong2020nonparametric}. The error bars illustrate that the variance of the errors typically decreases with $T$. We note that the error curves for $n=400$ are lower than those for $n=100$, and the variance of the errors are also slightly smaller for $n=400$.}

\item \rev{We show in Figure \ref{fig:plots_error_metric} the evolution of the mean ranking error $D_{\pi^*}(\sigma)$ as a function of $T$, where for each $T$ the mean is taken across all time instants in $\calT$, and all Monte Carlo runs.  We compute this error for the DRC, MLE and Borda Count methods, with $\sigma, \pi$ denoting the estimated ranks and weights by these algorithms. Using \eqref{eq:bound_metric}, this implies that $D_{\pi^*}(\sigma)$ should decrease with $T$ which is what we observe in Figure \ref{fig:error_metric_N100}. As for the $\ell_2$ error, MLE and DRC have similar performance. The Borda Count method performs well for rank recovery, as its error is only slightly worse than the other methods.}

\item \rev{We plot the evolution of the $\ell_\infty$ error $\norm{\pi^*-\est{\pi}}_\infty$ versus $T$ for $n=100$ and $n=400$ in Figure \ref{fig:linf}. We observe that the errors decrease with $T$, as shown theoretically, and that both MLE and DRC methods perform similarly.}

\item \rev{In Figure \ref{fig:perf}, we show that the optimal value derived theoretically for $\delta$ is close to the numerically optimal $\delta$. The minimal $\ell_2$ error is indeed obtained for $\delta$ close to $\delta^* \simeq T^{2/3}$.}

\item \rev{As the DRC and MLE methods performs similarly in term of the estimation error, it is of interest to look at their computational cost. We track the running time of both methods, for different values of $n$ and $T$, as shown in Tables \ref{tab:run_time_N100} and \ref{tab:run_time_N400}. We can see that the DRC method is far quicker than the MLE. Indeed, the MLE method solves an optimisation problem via a gradient descent algorithm whereas the DRC method only solves an eigenvalue problem.}
\end{enumerate}

\begin{figure}[!h]
\centering
\begin{subfigure}{.5\textwidth}
  \centering
  \includegraphics[width=\linewidth]{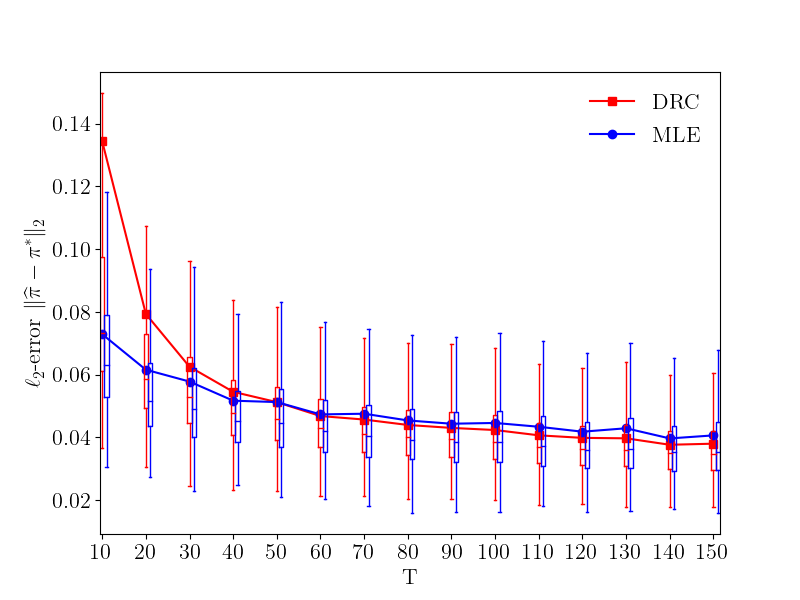}
  \caption{Evolution of the $\ell_2$ error for $n=100$}
  \label{fig:mse_N100}
\end{subfigure}%
\begin{subfigure}{.5\textwidth}
  \centering
  \includegraphics[width=\linewidth]{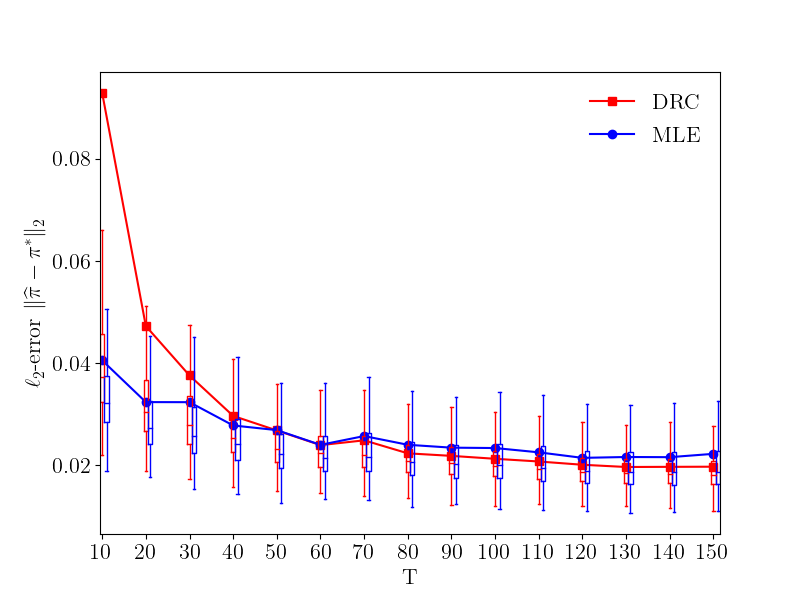}
  \caption{Evolution of the $\ell_2$ error for $n=400$}
  \label{fig:mse_N400}
\end{subfigure}
\caption{\rev{Evolution of the $\ell_2$ error with $T$ for Dynamic Rank Centrality, the MLE and Borda Count method for $n=100$ and $n = 400$. The results are averaged over the grid $\calT$ as well as $60$ Monte Carlo runs.}}
\label{fig:plots_mse}
\end{figure}
\begin{figure}[h!]
\centering
\begin{subfigure}{.5\textwidth}
  \centering
  \includegraphics[width=\linewidth]{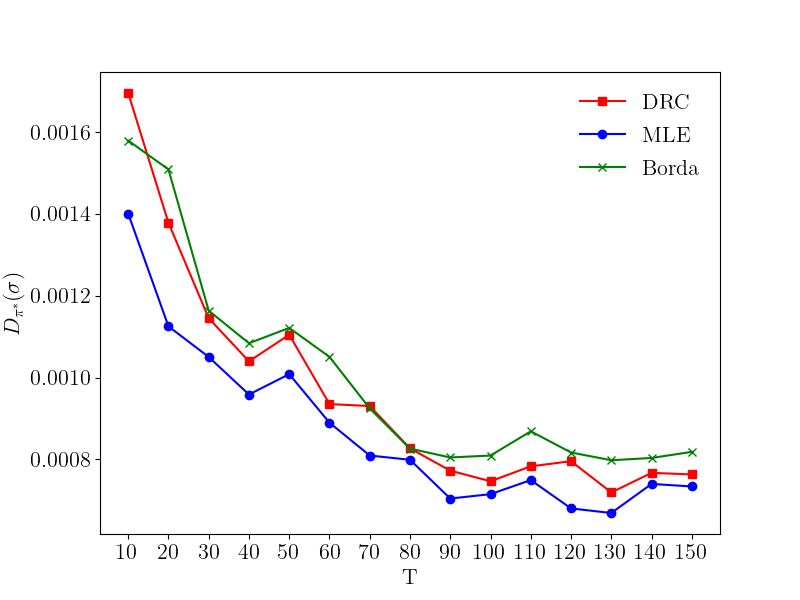}
  \caption{Evolution of the error metric $D_w(\sigma)$ for $n=100$}
  \label{fig:error_metric_N100}
\end{subfigure}%
\begin{subfigure}{.5\textwidth}
  \centering
  \includegraphics[width=\linewidth]{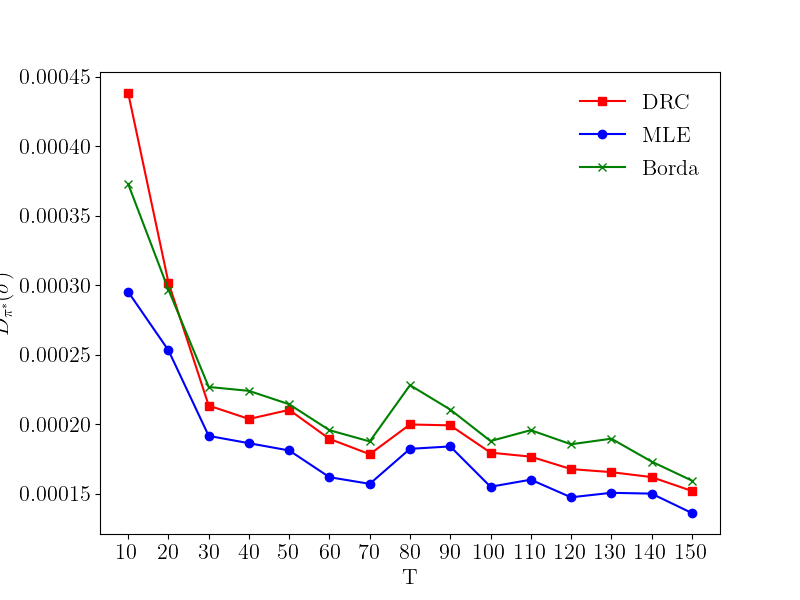}
  \caption{Evolution of the error metric $D_w(\sigma)$ for $n=400$}
  \label{fig:error_metric_N400}
\end{subfigure}%

\caption{\rev{Evolution of the errors $D_w(\sigma)$ with $T$ for Dynamic Rank Centrality, the MLE and Borda Count method for $n=100$ and $n=400$. The results are averaged over the grid $\calT$ as well as $60$ Monte Carlo runs.}}
\label{fig:plots_error_metric}
\end{figure}
\begin{figure}[!h]
\centering
\begin{subfigure}{.5\textwidth}
  \centering
  \includegraphics[width=\linewidth]{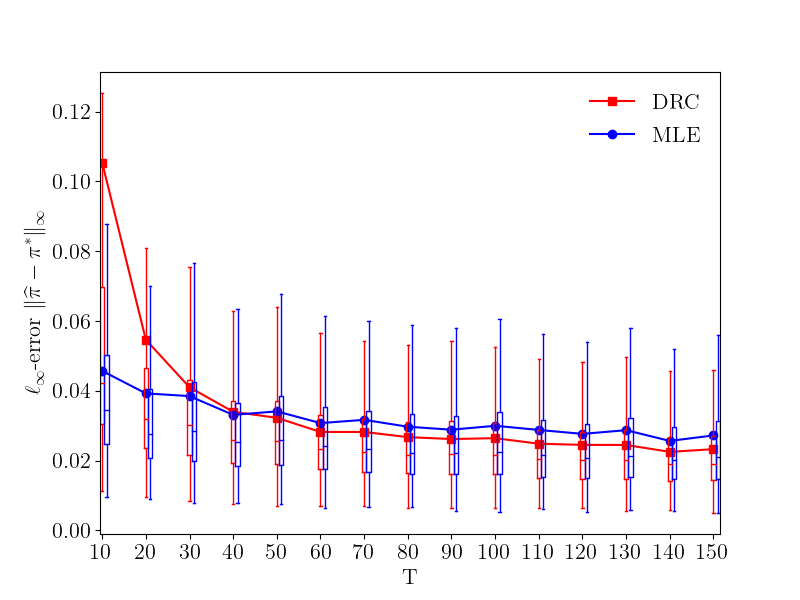}
  \caption{$n = 100$}
  \label{fig:linf_N100}
\end{subfigure}%
\begin{subfigure}{.5\textwidth}
  \centering
  \includegraphics[width=\linewidth]{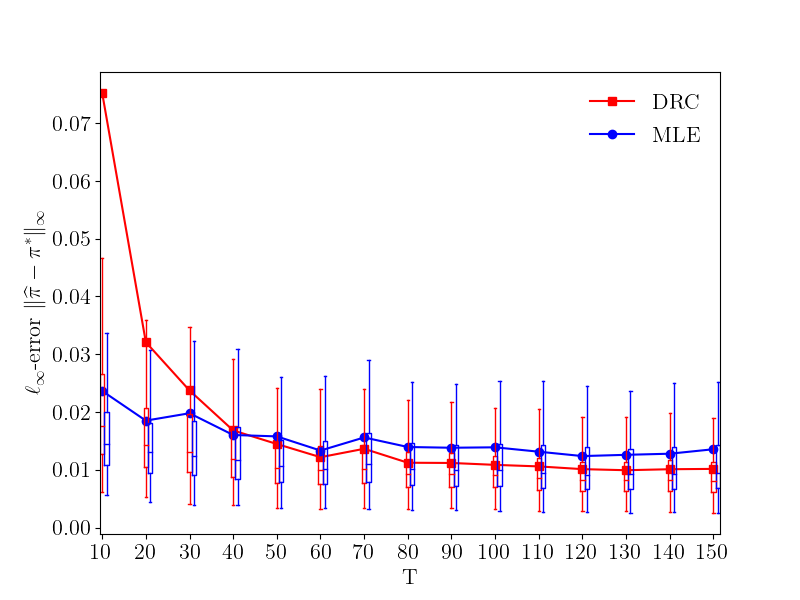}
  \caption{$n = 400$}
  \label{fig:linf_N400}
\end{subfigure}
\caption{\rev{Evolution of the $\ell_\infty$ error with $T$ for Dynamic Rank Centrality and MLE method for $n=100$ and $n=400$. The results are averaged over the grid $\calT$ as well as $60$ Monte Carlo runs.}}
\label{fig:linf}
\end{figure}
\begin{figure}[!h]
\centering
\begin{subfigure}{.5\textwidth}
  \centering
  \includegraphics[width=\linewidth]{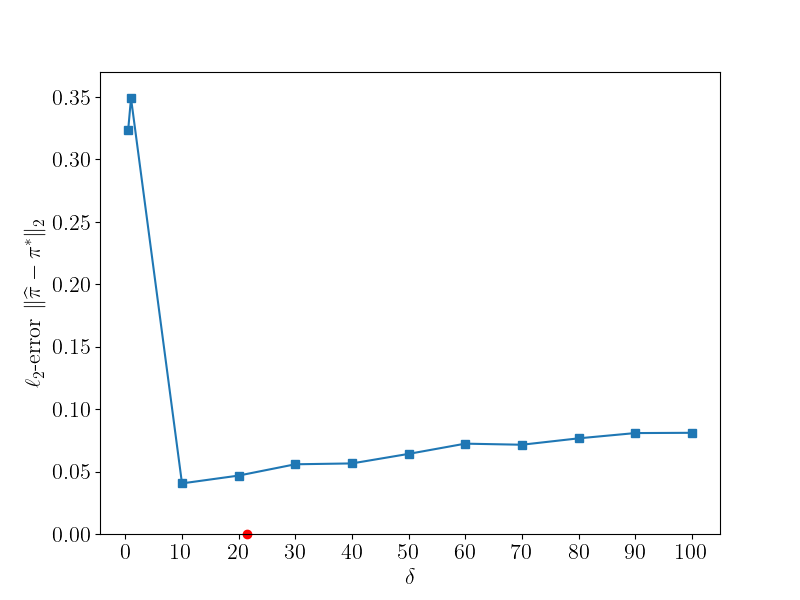}
  \caption{$n = 100$}
  \label{fig:perf_N100}
\end{subfigure}%
\begin{subfigure}{.5\textwidth}
  \centering
  \includegraphics[width=\linewidth]{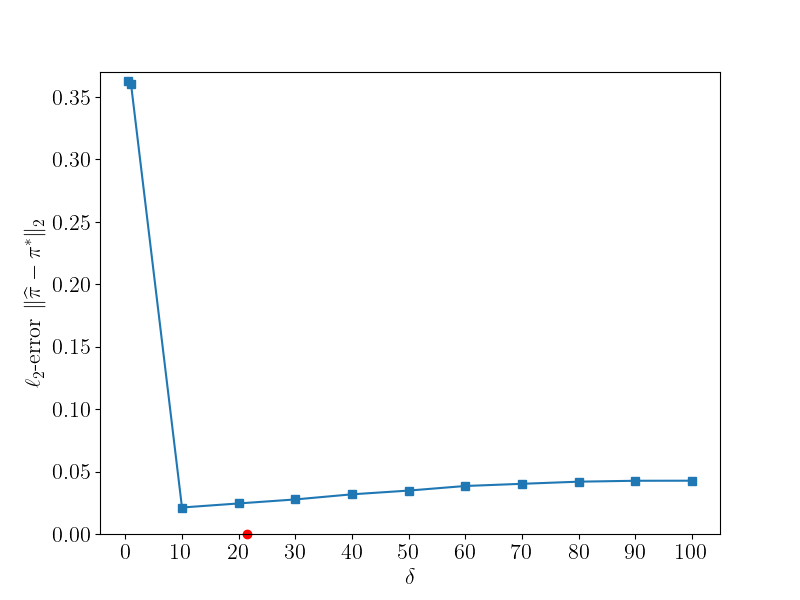}
  \caption{$n = 400$}
  \label{fig:perf_N400}
\end{subfigure}

\caption{\rev{Performance of the Dynamic Rank Centrality method for different value of parameter $\delta$, with $T = 100$, $n=100$ and $n=400$. We highlight in red on the x-axis the theoretical optimal value of parameter $\delta = T^{2/3}$. The results are averaged over the grid $\calT$ as well as $20$ Monte Carlo runs.}}
\label{fig:perf}
\end{figure}

\begin{table}[!h]
\centering
\begin{tabular}{llllll}
\hline
 T   & 10            & 20            & 30            & 40            & 50            \\
\hline
 DRC & 0.48$\pm$0.01 & 0.97$\pm$0.02 & 1.53$\pm$0.02 & 2.13$\pm$0.03 & 2.83$\pm$0.04 \\
 MLE & 2.17$\pm$0.84 & 3.69$\pm$0.52 & 6.06$\pm$0.72 & 9.14$\pm$1.23 & 12.2$\pm$2.42 \\
\hline
\end{tabular}
\hspace{0.5cm}
\begin{tabular}{llllll}
\hline
 T   & 60            & 70             & 80              & 90             & 100            \\
\hline
 DRC & 3.61$\pm$0.03 & 4.44$\pm$0.05  & 5.36$\pm$0.07   & 6.44$\pm$0.07  & 7.38$\pm$0.66  \\
 MLE & 15.67$\pm$3.2 & 21.61$\pm$4.16 & 34.62$\pm$20.68 & 27.35$\pm$3.08 & 39.5$\pm$15.93 \\
\hline
\end{tabular}
\begin{tabular}{llllll}
\hline
 T   & 110             & 120            & 130            & 140            & 150            \\
\hline
 DRC & 8.25$\pm$0.14   & 9.52$\pm$0.14  & 10.38$\pm$0.18 & 11.85$\pm$0.13 & 13.2$\pm$0.19  \\
 MLE & 56.86$\pm$38.95 & 48.69$\pm$9.54 & 50.27$\pm$4.35 & 55.93$\pm$2.62 & 65.99$\pm$5.97 \\
\hline
\end{tabular}

\caption{\rev{Average running time and associated standard deviations (in seconds) of DRC and MLE methods for $n = 100$. Results are averaged over 20 Monte Carlo runs. }}
\label{tab:run_time_N100}
\end{table}

\begin{table}[!h]
\centering
\begin{tabular}{lrrrrr}
\hline
 T   &    10 &    20 &     30 &     40 &     50  \\
\hline
 DRC &  5.41$\pm$ 0.41& 10.21$\pm$0.06 & 16.07$\pm$1.86 & 20.93$\pm$0.18 & 26.56$\pm$ 0.18\\
 MLE & 37.04$\pm$35.7 & 68.92$\pm$16.68 & 130.84$\pm$39.06 & 148.84$\pm$27.88 & 251.99$\pm$175.51 \\
\hline
\end{tabular}
\hspace{0.5cm}
\begin{tabular}{lrrrrr}
\hline
 T   &     60 &    70 &    80 &    90 &    100 \\
\hline
 DRC &  32.58$\pm$0.40 & 39.55$\pm$1.52 & 46.24$\pm$1.33 & 53.25$\pm$ 0.65& 59.58$\pm$2.28 \\
 MLE & 292.74$\pm$93.97 & 337.47$\pm$50.92 & 553.51$\pm$306.46 & 583.90$\pm$297.78 & 551.54$\pm$ 56.71 \\
\hline
\end{tabular}

\caption{\rev{Average running time and associated standard deviations (in seconds) of DRC and MLE methods for $n = 400$. Results are averaged over 15 Monte Carlo runs.}}
\label{tab:run_time_N400}
\end{table}

%
\subsection{Real dataset: NFL data}
We now evaluate our method on a real dataset which consists of the results of National Football League (NFL) games for each season between 2009 and 2015, that are available in the \texttt{nflWAR} package \citep{yurko2019nflwar}. The aim is to recover the ranking of the $n=32$ teams at the end of each season, which contains $T=16$ rounds. The dataset is hence composed of $16$ comparison graphs with $32$ nodes each, and comparison outcomes $(y_{ij}(t))_{i,j,t}$, where $y_{ij}(t) = 1$ if team $j$ beat team $i$ during in round $t$. We fit our model to this data by estimating the underlying strengths (and therefore the ranks) of the teams, at the end of a season. To do so, we tune the parameter $\delta$ using a Leave-One-Out Cross-Validation (LOOCV) procedure, described below.
\begin{enumerate}
    \item Fix a list of potential values of $\delta$ and compute for each of them the associated estimates of the strength $\est{\pi}$ at the end of the season.
    
    \item For every possible values of $\delta$, repeat the following steps several times.
    \begin{itemize}
    \item Select randomly a game during the season, identified by a time $t$ and the pair of compared teams $\set{i,j}$.
        
        \item Consider the dataset obtained by removing the outcome $y_{ij}(t)$ of this game. Use this dataset to compute the estimated strength $\Tilde{\pi}(t)$ at this time $t$.
        
        \item Compute the prediction error $\norm{y_{ij}(t)-\frac{\Tilde{\pi}_j(t)}{\Tilde{\pi}_j(t)+\Tilde{\pi}_i(t)}}_2$.
    \end{itemize}
    We then compute the mean of these prediction errors for each value of $\delta$.
    
    \item Finally, we select $\delta^*$ which has the smallest mean prediction error, and take as our estimate of the strengths at the end of the season the associated vector $\est{\pi}$.
\end{enumerate}
An analogous LOOCV procedure is performed in the MLE approach \citep{bong2020nonparametric} \rev{to tune the bandwidth parameter $h$}.

We then compare our estimated ranking with the ones obtained by the MLE method, the Borda Count method and also with the ELO ratings.  The latter are reputed to be relevant estimations of the teams qualities and are also openly available  \citep{paine2015}. \rev{Using the ELO ratings as ground truth, we assess the performance of the methods by comparing the estimated top 10 teams and by computing the correlation between the ELO rankings and the estimated ranks as well as correlations between the ELO ratings and the estimated strengths.}

\paragraph{\rev{Top 10 teams.}} Figure \ref{fig:tab_nfl} contains the estimation of the top $10$ teams by all the methods, for the seasons $2011$ to $2015$. Considering the ELO ranks as the true ranks, one can observe that the DRC and MLE methods perform similarly, as a majority of the top $10$ teams are recovered for each season. The Borda Count does not perform as well as it did on synthetic data; we can observe that it recovers the same ranks for several teams. This is explained by the fact that in this dataset, each team plays a small number of games, and thus the win rates take a finite (and small) number of values. However, it still recovers a large fraction of the top $10$ teams. 

\begin{figure}[!h]
    \centering
    \begin{subfigure}[b]{0.65\textwidth}
       \includegraphics[width=\linewidth]{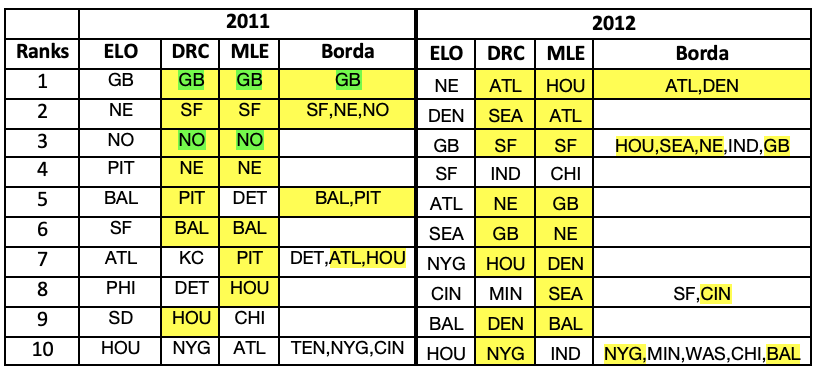}
       \label{fig:nfl_11_12} 
    \end{subfigure}
    
    \begin{subfigure}[b]{0.85\textwidth}
       \includegraphics[width=\linewidth]{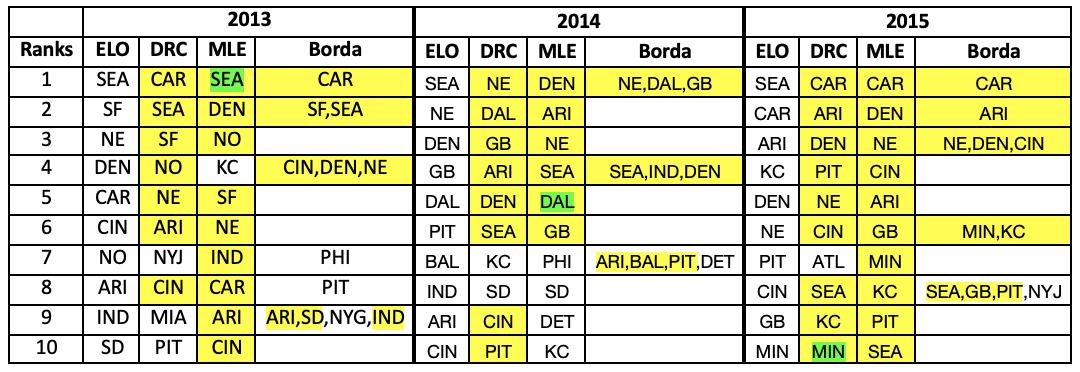}
       \label{fig:nfl_13_15}
    \end{subfigure}
    \caption{The top $10$ teams for seasons $2011$ to $2015$, using ELO ranks, DRC, the MLE, and Borda Count. Teams highlighted in yellow for a particular recovery method are teams appearing in the top $10$ list for ELO rankings.  Teams highlighted in green are recovered at the same rank as in the ELO rankings. 
    }
    \label{fig:tab_nfl}
\end{figure}

\paragraph{\rev{Correlation with ELO rankings.}} \rev{As all of the methods estimate the ranking of the teams, one can also compute the Kendall rank correlation}\footnote{\rev{The Kendall rank correlation \citep{kendall} between two rankings $x,y \in \set{1,\dots,n}$ is defined as}
\[ \rev{\tau := \frac{2}{n(n-1)}\sum_{i<j} sign(x_i-x_j)sign(y_i-y_j).}\]
\begin{itemize}
    \item \rev{If $\tau = 1$, $x$ and $y$ are the same.}
    \item \rev{If $\tau = -1$, $x$ and $y$ are reversed.}
    \item \rev{If $\tau = 0$, the correlation between $x$ and $y$ is no better than the correlation between 2 random rankings.}
\end{itemize} } \rev{between the estimated rankings of each method with the ELO ranking (considered as the true ranks). The results, gathered in Table \ref{tab:corr_ranks_nfl}, show that the methods have similar performance.}

\begin{table}[!h]
    \centering
    \begin{tabular}{lrrrrrrr}
\hline
       &  2011 &   2012 &   2013 &   2014 &   2015 \\
\hline
 DRC   & -0.092 &  0.104 &  0.116 &  0.245 & -0.201 \\
 MLE   & -0.052 &  0.161 &  0.125 &  0.125 & -0.084 \\
 Borda &  0.1   &  0.104 &  0.08  &  0.209 & -0.036 \\
\hline
\end{tabular}
    \caption{\rev{Kendall rank correlations between the ELO ranks and the estimated ranks by DRC, MLE and Borda Count methods at the end of each seasons. All of the methods perform similarly.}}
    \label{tab:corr_ranks_nfl}
\end{table}

\paragraph{\rev{Correlation with ELO ratings.}}\rev{The ELO ratings provide a vector of underlying ground truth strengths for each team. Then, as the DRC and MLE methods estimate the underlying strengths, one can also compute the correlation between the estimated strengths and the ELO ratings (all normalized such that their $\ell_1$ norm is 1). These results are gathered in Table 4 and show that the DRC performs much better than the MLE for all seasons. In particular, it shows that even if the ranks are not perfectly recovered, the estimated strengths by DRC are positively correlated with the ELO ratings.}

\begin{table}[!h]
    \centering
    \begin{tabular}{lrrrrrrr}
\hline
     &    2011 &   2012 &   2013 &   2014 &   2015 \\
\hline
 DRC &    0.425 &  0.518 &  0.284 &  0.478 &  0.414 \\
 MLE &    0.092 & -0.237 & -0.337 & -0.026 &  0.002 \\
\hline
\end{tabular}
    \caption{\rev{Correlations between the normalized ELO ratings and the DRC and MLE estimated strengths at the end of each seasons. Note here that DRC performs much better than the MLE.}}
    \label{tab:corr_nfl}
\end{table}

\section{Discussion} \label{sec:discussion}
We now provide a detailed discussion with closely related work for dynamic ranking, and conclude with future directions for research.
\subsection{Related work}\label{sec:related_work}
As mentioned in Section \ref{sec:intro}, existing theoretical results for the dynamic ranking setup are limited, and the only works we are aware of are the recent results of Bong et al. \citep{bong2020nonparametric} and of Li and Wakin \citep{li2021recovery}. We now discuss these two results in more detail.
%
%
%
%
%
\paragraph{Comparison to Bong et al.\citep{bong2020nonparametric}.} The dynamic BTL model proposed by Bong et al. \citep{bong2020nonparametric} is closely related to the one we presented in Section \ref{sec:prob_setup}. They consider the $\logit$ version of the BTL model 
\begin{equation*}
    \logit\left( \prob(i \text{ beats } j \text{ at time } t)\right) = \beta^*_i(t)-\beta^*_j(t)
\end{equation*}
where $\beta^*(t)$ represents the vector of scores at time $t$ with $w^*(t) = \exp(\beta^*(t))$. The main differences with our model are that (a) the grid $\calT$ can be non-uniform, and (b) the number of comparisons $L_{ij}(t)$ made for each pair $\set{i,j}$ at each time $t \in [0,1]$ can vary. We assumed the grid $\calT$ to be uniform only for simplicity, our analysis can be easily extended to handle the non-uniform setting as long as $\calT$ is ``sufficiently regular''. Similarly, \rev{one could potentially extend our analysis to handle varying number of comparisons $L_{ij}(t)$, although the current proof technique will lead to the presence of the maximum of the $L_{ij}(t)$'s (recall Remark \ref{rem:diff_Lij}).} The assumption $L_{ij}(t) = L$ was only made for simplicity in the proofs as is typically done for the static setting (see for eg., \citep[Remark 3]{chen2020partial}). 

The pairwise comparison data at each time $t' \in \calT$ are gathered into a matrix $X(t')$ where $X_{ij}(t')$ is the number of times $i$ beat $j$ at time $t'$. In order to use the temporal aspect of the data, they smooth the data using a kernel function; this is analogous to (weighted) averaging the data in a suitable neighborhood. More precisely, to estimate the ranks at time $t$, they first compute the smoothed data as
\begin{equation*}
    \Tilde{X}(t) = \sum_{t'=0}^T W_h(t',t)X(t')
\end{equation*}
where $W_h$ is a kernel function with bandwidth $h$. Then, $\beta^*$ is estimated by minimizing the negative log-likelihood, i.e.,
\begin{equation} \label{eq:mle_bong}
    \argmin{\beta: \sum_{i=1}^n \beta_i = 0} \est{R}(\beta;t) \equiv \argmin{\beta: \sum_{i=1}^n \beta_i = 0} \left( \sum_{i,j:i \neq j} \Tilde{X}_{ij}(t)\log \left(1+\exp(\beta_j(t)-\beta_i(t)) \right) \right)
\end{equation}
%
using a proximal gradient descent algorithm. If the matrix $|\Tilde{X}(t)|$ is considered to be the adjacency matrix of a weighted directed graph, then the strong connectivity of this graph is sufficient for the unique existence of the solution of \eqref{eq:mle_bong}. 
%
%
%
The main assumptions needed for their theoretical analysis are the following. 
\begin{itemize}
    \item The probabilities $\prob(i \text{ beats } j \text{ at time } t)$ are Lispchitz functions of time $t \in [0,1]$ for all $i \neq j \in [n]$ (same as Assumption \ref{assump:smooth1}).
    
    
    \item Each pair of teams $\set{i,j}$ has been compared at least at one time point $t'\in\calT$. Translated to our notation, this means that the union graph $\cup_{t' \in \calT} G_{t'}$ is complete.
\end{itemize}

We remark that this last assumption is a stronger assumption than the connectivity assumption on the union graph $\Gu$ that we required in our analysis.

Bong et al. derive bounds  in the $\ell_{\infty}$ norm for estimating $\beta^*$. For a bandwidth $h \gtrsim \left(\frac{\log n}{T}\right)^{\frac{1}{3}}$, denoting $\est{\beta}(t)$ to be the \rev{``MLE''} estimator\footnote{\rev{Note that \eqref{eq:mle_bong} is not the MLE for the dynamic BTL model with Lipschitz evolving win/loss probabilities (Assumption \ref{assump:smooth1}). It is in fact the MLE for the static BTL model, applied to the kernel-smoothed observations.}} (i.e., the solution of \eqref{eq:mle_bong}), it is shown \citep[Theorem 5.2]{bong2020nonparametric} that with high probability,
\begin{equation*}
    \norm{\est{\beta}(t)-\beta^*(t)}_\infty \lesssim \delta_h(t) + \left(\frac{\log n}{nT}\right)^{\frac{1}{3}}
\end{equation*}
where $\delta_h(t)$ is denoted to be a discrepancy parameter in \citep{bong2020nonparametric}, and is small when all the teams play the same number of games at all times. However, the dependence of $\delta_h$ on $h$ is not made explicit. Note that they also recover the $T^{-1/3}$ rate for pointwise estimation of Lipschitz functions.  
Additionally, they also derive a rate for the uniform error. For $h \gtrsim \left(\frac{\log (nT^3)}{T}\right)^{\frac{1}{3}}$, it is shown  \citep[Theorem 5.3]{bong2020nonparametric} that with high probability, 
\begin{equation*}
    \sup_{t\in[0,1]} \norm{\est{\beta}(t)-\beta(t)}_\infty \lesssim \sup_{t\in[0,1]}\delta_h(t) + \left(\frac{\log (nT)}{T}\right)^{\frac{1}{3}}.
\end{equation*}
While we provide pointwise estimation error bounds for any given $t \in [0,1]$, it is also possible to extend our results to obtain error bounds holding uniformly over all $t \in [0,1]$. The first main idea here would be to observe that there are $O(T)$  different number of neighborhoods in the construction of $\Pest$ in \eqref{eq:spec_mat_pest}, which implies that there are $O(T)$ different possible values of $\est{\pi}(t)$ over all $t \in [0,1]$. Secondly, one can verify that $\pistar$ is a Lipschitz function of $t$. These two facts together with a union bound argument can be used to establish $\ell_2$ and $\ell_{\infty}$ bounds holding uniformly over $t \in [0,1]$, with probability at least $1 - O(T n^{-c})$ where $c$ is a suitably large constant. In our analysis, we had taken $c$ to be $10$ (resp. $9$) for the $\ell_2$ (resp. $\ell_{\infty}$) analysis, but it can be any suitably large value, at the expense of worsening the other constants in the accompanying theorems in Section \ref{sec:results}.   
%

\paragraph{Comparison to Li and Wakin \citep{li2021recovery}.} The model introduced by Li and Wakin \citep{li2021recovery} aims at recovering a pairwise comparison matrix $X(T)$ at a time $T$ from noisy linear measurements of the matrices $X(t)$ for $t \in [T]$. For each pair of items $\set{i,j}$, $X_{ij}(t)$ quantifies the preference of  item $j$ over  item $i$ at time $t$ with $X_{ij}(t) > 0$ if item $j$ is preferred to  item $i$. One can for example consider $X_{ij}(t)$ to be the goal difference during a football game between the teams $j$ and $i$ at time $t$. Note that these data contain more information than the simple binary outcomes we use in the dynamic BTL model. A specificity of this model is that the preferences over the collection of items are supposed to depend on $2r$ latent factors, captured in the matrices $S_t\in\matR^{n \times r}$ and $Q\in\matR^{n \times r}$. The model considered in \citep{li2021recovery} assumes that  
\begin{equation*}
     X(t) = S_t Q^\top - QS_t^{\top} \quad \forall t \in [T].
\end{equation*}
The matrix $Q$ contains information on factors that do not depend on time. The time-dependent data are contained in the matrix $S_t$ and evolve under the following generative model.
\begin{equation*}
    S_t = S_{t-1} + E_t \quad \forall t \in [T]
\end{equation*}
where $E_t$ is an innovation matrix with i.i.d. centered Gaussian entries. The intuition behind this model comes from the case of a single factor ($r=1$) with $Q \in \matR^n$ being the all ones vector. In this case, for each pair of items $\set{i,j}$,
\begin{equation*}
    X_{ij}(t) = s_{t,i}-s_{t,j} \quad \forall t\in [T].
\end{equation*}
The outcomes of the comparisons then, as in the BTL, only depend on the strength of each item at time $t$. Then by recovering the matrix $X(T)$, one can subsequently also derive a ranking of the items (see for eg., \citep{ranksync21}). 

Denoting $M$ to be the number of measurements available at each time $t$, they show a bound on the estimation error $\norm{X(T)-\est{X}(T)}_F$ that holds with high probability and decreases with $M$, where $\est{X}(T)$ is computed as the solution of an optimization problem. It is however unclear how one can then derive a ranking of the items from this estimated comparison matrix $\est{X}(T)$ for $r>1$.

\subsection{Future work}
For future work it would be interesting to extend our study to other models. Indeed, the BTL model is classic for the ranking problem but suffers from some limitations. 
\begin{enumerate}
    \item \textit{Independence.} One of the main assumptions of the dynamic BTL model we consider is the independence of the outcomes for all comparisons at a given time $t$, and the independence across different time points. The latter assumption can in particular be questioned, as the choices of one user across time  are typically not going to be independent. It would be interesting to model these dependencies across time, for instance, by modelling them as a Markov process.
    
    \item \textit{Home effect.} Another model, used in the study of sports tournaments by Cattelan et al. \citep{cattelan2013dynamic} adapts the BTL to include a home effect, which is beneficial to the hosting team. Indeed, because of the familiar environment and the supporting public, a team is more likely to win if they play in their stadium. It would be interesting to theoretically analyze such a model. 
    
    
    \item \textit{Non-parametric dynamic models.} Considering a parametric model as the BTL model has limitations as it assumes that the preference outcomes depend only on one parameter $w^*$, which can be seen as a strong transitivity constraint \citep{shah2016stochastically}. That is why one can instead use non-parametric models as described in \citep{shah2016stochastically,shah2017simple,pananjady2017worst}, where only a certain transitivity assumption is made on the comparison matrix. In particular, the BTL model belongs to this class of models. It has been shown by Shah et al. \citep{shah2016stochastically} that in the static case, the matrix of probabilities can be estimated at the same rate as in a parametric model. As such extensions can be considered in the static setting, it would be interesting to adapt these models to the dynamic case and study their performance.
\end{enumerate}

\bibliographystyle{plain}
\bibliography{references}

\newpage
\appendix
\section{Summary of notation} \label{sec:summary_notation}
 
\begin{center}
\begin{table}[!ht]
\begin{tabular}{ | m{4cm}| m{10cm} | } 
\hline
\textit{Symbol} & \textit{Definition} \\
\hline\hline
 $\calT$ & Uniform grid of $[0,1]$, with size $T+1$.  \\ 
\hline

$G_{t'} = ([n], \calE_{t'})$ & Undirected comparison graph at time $t' \in \calT$ \\

\hline
$w_t^* \in \matR^n$   & Ground-truth strengths at time $t \in [0,1]$ \\

\hline

$y^*_{ij}(t)$ and $y_{ij}(t)$ & (resp. population and empirical) Fraction of times $j$ beats $i$ at time $t$ \\

\hline

$M$ & Lipschitz constant of $y^*_{ij}(t)$ \\

\hline 

$L$ & Number of independent comparisons made for each $\set{i,j} \in \calE_{t'}$ \\
\hline 

$\Nt$ & Neighborhood of size $\delta$, see \eqref{eq:delta_neighbor} \\

\hline 

$\Nij$ & $\set{t' \in \Nt | \, \set{i,j}\in\calE_{t'}}$ \\

\hline 

$\Gu = ([n], \Eu)$ & ``Union graph'' where $\Eu = \set{\set{i,j} \, : \, i\neq j, \, \cardNij \geq 1}$ \\
\hline

$\Nmax$ and $\Nmin$ & $\Nmax = \max_{\set{i,j}\in\Eu} \cardNij \quad \text{and} \quad \Nmin = \min_{\set{i,j}\in\Eu} \cardNij$\\
\hline 

$\Pest, \Pbar \in \matR^{n \times n}$ & Empirical and population transition matrices resp., see \eqref{eq:spec_mat_pest} and \eqref{eq:spec_mat_pbar} \\
\hline

$\piest, \pistar \in \matR^n$ & Stationary distributions of $\Pest$ and $\Pbar$ resp. \\ 

\hline 

$\dumax, \dumin$ & Maximum (resp. minimum) degree of $\Gu$ \\

\hline 

$\du$ $(\geq \dumax)$ & Normalization term in \eqref{eq:spec_mat_pest} \\
\hline

$b(t)$ & $\max_{i,j \in [n]} \frac{w^*_{t,i}}{w^*_{t,j}} \mbox{ for all } t \in [0,1]$ \\
\hline

$\bmax$ & $\max_{t'\in\Nt} b(t')$ \\

\hline

$\Ld$ & Random walk Laplacian of $\Gu$ \\

\hline

$\xid$ & $\xid = 1-\lammax(\Ld)$ where 
$\lammax(\Ld)$ is the second largest eigenvalue (in absolute value) of $\Ld$ \\

\hline

$G_{t'} = \calG(n,p(t'))$ & Erd\"os-Renyi graph at $t' \in\calT$ with parameter $p(t') \in [0,1]$ \\
\hline 

$\pu$ & $1-\prod_{t' \in \Nt} (1-p(t'))$, see \eqref{eq:pdel_def} \\

\hline 

$\psum$ & $\sum_{t' \in \Nt} p(t')$ \\

\hline

$\pmin, \pmax$ & $\pmin := \min_{t' \in \calT} p(t')$ and $\pmax := \max_{t' \in \calT} p(t')$ \\

\hline
\end{tabular}
\caption{Summary of symbols used throughout the paper along with their definitions.}
\end{table}
\end{center}

\section{Technical tools}\label{appendix:A}
We collect here some technical results that are used for proving our main results. We begin by recalling the following result from \citep{Chen_2019}.
\begin{theorem}[{\citep[Theorem 8]{Chen_2019}}]\label{thm:thm8}
Suppose that $P, \est{P}, P^*$ are probability transition matrices with stationary distributions $\pi, \est{\pi}, \pi^*$ respectively. We assume that $P^*$ represents a reversible Markov chain. When $\norm{P-\est{P}}_{\pi^\star} < 1-\max\{\lambda_2(P^\star),-\lambda_n(P^\star)\}$ holds, then we have 
\begin{equation}
    \norm{\pi - \est{\pi}}_{\pi^\star} \leq \frac{\norm{\pi^\top(P - \est{P})}_{\pi^\star}}{1 - \max\{\lambda_2(P^\star), - \lambda_n(P^\star)\}-\norm{P^\star - \est{P}}_{\pi^\star}}.
\end{equation}
\end{theorem}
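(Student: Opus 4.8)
\medskip
\noindent The plan is the standard perturbation argument for stationary distributions of reversible chains: turn the two fixed-point equations into a single identity for $\pi-\est{\pi}$, isolate the ``one-step discrepancy'' $\pi^\top(P-\est{P})$, and control the leftover error through the spectral gap of $P^\star$. First I would set $z^\top := \pi^\top-\est{\pi}^\top$ and record that $z^\top\mathbf 1 = 0$, since $\pi$ and $\est{\pi}$ are probability vectors. From $\pi^\top P = \pi^\top$ and $\est{\pi}^\top\est{P} = \est{\pi}^\top$, adding and subtracting $\pi^\top\est{P}$ gives $z^\top = \pi^\top(P-\est{P}) + z^\top\est{P}$, i.e. $z^\top(I-\est{P}) = \pi^\top(P-\est{P})$; substituting $I-\est{P} = (I-P^\star) - (\est{P}-P^\star)$ yields the identity
\begin{equation*}
    z^\top(I-P^\star) = \pi^\top(P-\est{P}) + z^\top(\est{P}-P^\star).
\end{equation*}

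\noindent Next I would prove the spectral lower bound $\norm{z^\top(I-P^\star)}_{\pi^\star} \ge \bigl(1-\max\{\lambda_2(P^\star),-\lambda_n(P^\star)\}\bigr)\norm{z}_{\pi^\star}$. Here reversibility of $P^\star$ with respect to $\pi^\star$ is crucial: the symmetrized matrix $S := (\Pi^\star)^{1/2}P^\star(\Pi^\star)^{-1/2}$, with $\Pi^\star := \diag(\pi^\star)$, is symmetric, shares the spectrum of $P^\star$, and has Perron eigenvector $(\Pi^\star)^{1/2}\mathbf 1$ for its top eigenvalue $1$; on the orthogonal complement of that eigenvector, $I-S$ is bounded below (in $\ell_2$) by $1-\max\{\lambda_2(P^\star),-\lambda_n(P^\star)\}$. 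Transporting this back through the similarity and using $z^\top\mathbf 1=0$ --- which is exactly what places $z$ orthogonal to the stationary direction in this weighted geometry --- gives the inequality.

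\noindent Then I would bound the right-hand side of the identity from above: by the triangle inequality and the definition of the induced operator norm $\norm{\cdot}_{\pi^\star}$,
\begin{equation*}
    \norm{z^\top(I-P^\star)}_{\pi^\star} \le \norm{\pi^\top(P-\est{P})}_{\pi^\star} + \norm{\est{P}-P^\star}_{\pi^\star}\,\norm{z}_{\pi^\star}.
\end{equation*}
Combining with the lower bound and rearranging gives $\bigl(1-\max\{\lambda_2(P^\star),-\lambda_n(P^\star)\}-\norm{\est{P}-P^\star}_{\pi^\star}\bigr)\norm{z}_{\pi^\star} \le \norm{\pi^\top(P-\est{P})}_{\pi^\star}$; the hypothesis of the theorem makes the bracketed factor strictly positive, so dividing through yields exactly the stated bound on $\norm{\pi-\est{\pi}}_{\pi^\star}$.

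\noindent The hard part will be the spectral lower bound: everything else is routine manipulation of the two stationarity equations together with the triangle inequality. The delicate point there is to verify that the mean-zero vector $z$ is genuinely orthogonal --- in the $\pi^\star$-weighted inner product, i.e. after the symmetrization $S$ --- to the Perron eigendirection of $P^\star$, so that the contraction factor is $1-\max\{\lambda_2(P^\star),-\lambda_n(P^\star)\}$ and not the useless $1-\lambda_1(P^\star)=0$.
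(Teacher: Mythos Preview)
The paper does not prove this statement: Theorem~\ref{thm:thm8} is recalled verbatim from \cite[Theorem~8]{Chen_2019} in Appendix~\ref{appendix:A} as a black-box tool, with no accompanying proof. There is therefore no ``paper's own proof'' to compare your proposal against.

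For what it is worth, your outline is precisely the standard perturbation argument one finds in \cite{Chen_2019}: derive the fixed-point identity $z^\top(I-P^\star)=\pi^\top(P-\est P)+z^\top(\est P-P^\star)$, lower-bound the left side via the spectral gap of the reversible chain, upper-bound the right side by the triangle inequality and the induced operator norm, and rearrange. One caution on the point you yourself flag as delicate: the condition $z^\top\mathbf 1=0$ is \emph{not} orthogonality of $z$ to $\mathbf 1$ in the $\pi^\star$-weighted inner product $\langle x,y\rangle_{\pi^\star}=\sum_i\pi^\star_ix_iy_i$ (that would read $\sum_i\pi^\star_iz_i=0$). The correct transport is to set $w:=(\Pi^\star)^{-1/2}z$, not $(\Pi^\star)^{1/2}z$; then $\langle w,\,(\Pi^\star)^{1/2}\mathbf 1\rangle=\sum_i z_i=0$, which places $w$ in the $\ell_2$-orthogonal complement of the Perron eigenvector of $S$ and lets you invoke the gap. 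Equivalently, the left action $\mu^\top\mapsto\mu^\top P^\star$ is self-adjoint in the $1/\pi^\star$-weighted inner product, and $z^\top\mathbf 1=0$ is exactly orthogonality to $\pi^\star$ there. Make sure, when you carry this through, that the norm you end up in matches the one in the statement; if not, the paper's own inequalities~\eqref{eq:bound_pinorm} let you pass between $\norm{\cdot}_{\pi^\star}$ and $\ell_2$ at the cost of factors $\sqrt{\pi^\star_{\min}}$ and $\sqrt{\pi^\star_{\max}}$.
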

Next, we recall some useful concentration results from probability starting with the classical Chernoff bound.
\begin{theorem}[\textbf{Chernoff bound}, {\citep[Theorems 4.4, 4.5]{mitzenmacher2017probability}}]\label{thm:chernoff}
\rev{Let $X_1,\dots,X_n$ be independent Bernoulli variables with $\prob(X_i=1) = p_i$. Let $X = \sum_{i=1}^n X_i$.
\begin{enumerate}
    \item For any $\mu \geq \expec[X]$, the following is true. \begin{itemize}
        \item For $\delta \in (0,1]$, $\prob\left(X \geq (1+\delta)\mu\right) \leq e^{-\delta^2\mu/3}$. 
        \item For $\delta \geq 1$, $\prob\left(X \geq (1+\delta)\mu\right) \leq e^{-\delta\mu/3}$. 
    \end{itemize}
    \item For any $\mu \leq \expec[X]$ and $\delta \in (0,1)$, $\prob\left(X \leq (1-\delta)\mu\right) \leq e^{-\delta^2\mu/2}.$
\end{enumerate}}
\end{theorem}
\begin{theorem}[\textbf{Hoeffding's inequality}] \label{thm:hoeffding}
Let $X_1,\dots,X_n$ be a sequence of independent random variables where $X_i \in [a_i,b_i]$ for each $i \in [n]$, and $S_n = \sum_{i=1}^n X_i$. Then
\begin{equation*}
    \prob\left(\abs{S_n-\expec[S_n]} \geq t \right) \leq 2\exp\left(-\frac{2t^2}{\sum_{i=1}^n(b_i-a_i)^2}\right).
\end{equation*}
\end{theorem}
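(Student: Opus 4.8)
The plan is to use the Cram\'er--Chernoff exponential moment method. First I would reduce to the centered case: set $Y_i = X_i - \expec[X_i]$, so that each $Y_i$ has mean zero and lies in an interval of length $b_i - a_i$, and $S_n - \expec[S_n] = \sum_{i=1}^n Y_i$. For any $\lambda > 0$, Markov's inequality applied to the nonnegative random variable $\exp(\lambda \sum_i Y_i)$, together with the independence of the $Y_i$, yields
\[
    \prob\Bigl(\sum_{i=1}^n Y_i \geq t\Bigr) \leq e^{-\lambda t}\, \expec\Bigl[e^{\lambda \sum_i Y_i}\Bigr] = e^{-\lambda t}\prod_{i=1}^n \expec\bigl[e^{\lambda Y_i}\bigr],
\]
so it suffices to control the moment generating function of each bounded, centered variable $Y_i$.

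The key step is Hoeffding's lemma: if $Y$ is mean-zero with $Y \in [c,d]$ almost surely, then $\expec[e^{\lambda Y}] \leq \exp\bigl(\lambda^2 (d-c)^2/8\bigr)$ for all $\lambda \in \matR$. I would prove it from convexity of $x \mapsto e^{\lambda x}$: for $y \in [c,d]$ one has $e^{\lambda y} \leq \frac{d-y}{d-c} e^{\lambda c} + \frac{y-c}{d-c} e^{\lambda d}$; taking expectations and using $\expec[Y] = 0$ gives $\expec[e^{\lambda Y}] \leq e^{\psi(\lambda)}$ with $\psi(\lambda) = -\lambda p h + \log\bigl(1 - p + p e^{\lambda h}\bigr)$, where $h = d-c$ and $p = -c/h \in [0,1]$. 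A direct computation shows $\psi(0) = \psi'(0) = 0$ and $\psi''(\lambda) = h^2 u(1-u)$ with $u = p e^{\lambda h}/(1-p+p e^{\lambda h}) \in (0,1)$, hence $\psi''(\lambda) \leq h^2/4$; Taylor's theorem with Lagrange remainder then gives $\psi(\lambda) \leq \lambda^2 h^2/8$.

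Plugging this in with $h = b_i - a_i$ gives
\[
    \prob\Bigl(\sum_{i=1}^n Y_i \geq t\Bigr) \leq \exp\Bigl(-\lambda t + \tfrac{\lambda^2}{8}\sum_{i=1}^n (b_i - a_i)^2\Bigr),
\]
and the optimal choice $\lambda = 4t / \sum_i (b_i - a_i)^2$ yields $\exp\bigl(-2t^2 / \sum_i (b_i-a_i)^2\bigr)$ for the upper tail. Running the same argument with $(-Y_i)_i$ bounds $\prob(\sum_i Y_i \leq -t)$ by the same quantity, and a union bound over the two one-sided events produces the factor $2$ in the statement. The only genuinely nontrivial point is the proof of Hoeffding's lemma --- concretely the uniform estimate $\psi''(\lambda) \leq (d-c)^2/4$, which is exactly what delivers the quadratic exponent with the sharp constant $1/8$; the Chernoff step, the tensorization over independent coordinates, and the optimization in $\lambda$ are all routine.
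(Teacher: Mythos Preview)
Your proof is correct and is the standard textbook argument for Hoeffding's inequality. Note, however, that the paper does not actually prove this statement: Theorem~\ref{thm:hoeffding} is merely recalled in Appendix~\ref{appendix:A} as a classical concentration inequality to be used as a black box, with no proof given. There is therefore nothing to compare against; your Cram\'er--Chernoff argument via Hoeffding's lemma is the canonical proof and every step you outline (centering, exponential Markov, tensorization by independence, the convexity bound $\psi''(\lambda)\le (d-c)^2/4$, optimization in $\lambda$, and the two-sided union bound) is sound.
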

\begin{theorem}[\textbf{Matrix Bernstein}, \citep{Tropp15}] \label{thm:bernstein}
Let $Z_1,\dots,Z_n \in \mathbb{R}^{d_1 \times d_2}$ be independent, zero-mean random matrices, each satisfying (almost surely) $\norm{Z_i}_2 \leq B$. Then for any $s\geq 0$, 
\begin{equation*}
    \prob\left(\left\|\sum_{i=1}^n Z_i\right\|_2\geq s\right) \leq (d_1+d_2)\exp\left(-\frac{3s^2}{6\nu +2Bs}\right)
\end{equation*}
where $\nu = \max\set{\norm{\expec[\sum_{i=1}^n Z_i^\top Z_i]}_2, \norm{\expec[\sum_{i=1}^n Z_i Z_i^\top]}_2}$.
\end{theorem}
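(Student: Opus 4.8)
The plan is to prove this via the matrix Laplace transform method (Ahlswede--Winter / Tropp). First I would dispose of the rectangular geometry by passing to the Hermitian dilation: for $Z \in \matR^{d_1 \times d_2}$ put $\mathcal{S}(Z) = \begin{pmatrix} 0 & Z \\ Z^\top & 0 \end{pmatrix}$, a symmetric matrix of size $d := d_1 + d_2$ satisfying $\lambda_{\max}(\mathcal{S}(Z)) = \norm{Z}_2$ and $\mathcal{S}(Z)^2 = \diag(ZZ^\top,\, Z^\top Z)$. Setting $X_i = \mathcal{S}(Z_i)$, the matrices $X_i$ are independent, symmetric, zero-mean, with $\norm{X_i}_2 \le B$, and moreover $\norm{\sum_i Z_i}_2 = \lambda_{\max}(\sum_i X_i)$ while $\norm{\expec \sum_i X_i^2}_2 = \nu$. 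Hence it suffices to show $\prob(\lambda_{\max}(\sum_i X_i) \ge s) \le d\exp(-3s^2/(6\nu + 2Bs))$.

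Next, for $\theta > 0$, Markov's inequality applied to $e^{\theta\lambda_{\max}(Y)} = \lambda_{\max}(e^{\theta Y}) \le \Tr\exp(\theta Y)$, with $Y = \sum_i X_i$, gives $\prob(\lambda_{\max}(Y) \ge s) \le e^{-\theta s}\,\expec\Tr\exp(\sum_i \theta X_i)$. The key step --- and the one genuinely deep ingredient --- is subadditivity of the matrix cumulant generating function: by Lieb's concavity theorem (concavity of $A \mapsto \Tr\exp(H + \log A)$ on positive definite $A$) together with Jensen's inequality, peeling the $X_i$ off one at a time yields $\expec\Tr\exp(\sum_i \theta X_i) \le \Tr\exp(\sum_i \log\expec e^{\theta X_i})$. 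Each term is then controlled using $\expec X_i = 0$ and the spectral bound $\norm{X_i}_2 \le B$: the pointwise inequality $e^{\theta a} \le 1 + \theta a + f(\theta)a^2$ on $a \in [-B,B]$, with $f(\theta) = (e^{\theta B} - 1 - \theta B)/B^2$, transfers (via the spectral theorem) to $\expec e^{\theta X_i} \preceq I + f(\theta)\expec X_i^2$, whence $\log\expec e^{\theta X_i} \preceq f(\theta)\expec X_i^2$ using the operator monotonicity bound $\log(I + M) \preceq M$.

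Combining these estimates with $\Tr\exp(M) \le d\exp(\lambda_{\max}(M))$ and monotonicity of the trace exponential gives $\prob(\lambda_{\max}(Y) \ge s) \le d\exp(-\theta s + f(\theta)\nu)$. Finally I would optimize over $\theta$: using the convenient envelope $f(\theta) \le (\theta^2/2)/(1 - B\theta/3)$, valid for $0 < \theta < 3/B$, the exponent becomes a rational function of $\theta$ whose minimum over $(0,3/B)$ equals $-(s^2/2)/(\nu + Bs/3) = -3s^2/(6\nu + 2Bs)$, attained at $\theta = s/(\nu + Bs/3)$; this yields the stated bound with the front factor $d = d_1 + d_2$. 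The only hard part is the matrix cumulant subadditivity step, which rests on Lieb concavity --- without it one recovers only the weaker Ahlswede--Winter bound. Everything else (the dilation reduction, the Laplace transform, the quadratic operator bound on $e^{\theta X}$, and the scalar optimization) is routine, though some care with the envelope for $f$ is needed to land on exactly the constants $6\nu + 2Bs$.
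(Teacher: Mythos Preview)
Your sketch is correct and is precisely the argument from Tropp's monograph: Hermitian dilation to reduce to the symmetric case, the matrix Laplace transform, Lieb's concavity to obtain subadditivity of the matrix cgf, the quadratic mgf bound, and the final scalar optimization landing on the constants $6\nu + 2Bs$. There is no gap.

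That said, the paper itself does not prove this statement at all --- it simply records Matrix Bernstein as a technical tool in Appendix~A and cites \cite{Tropp15}. So there is no ``paper's own proof'' to compare against; you have supplied a proof where the paper only gives a reference, and the proof you supply is the standard one from the cited source.
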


\begin{proposition}\label{prop:useful_ineg}
\rev{The following is true.
\begin{enumerate}
    \item $1-\frac{x}{2} \geq e^{-x}$ for $x\in[0,1.59]$
    \item $1-x \leq e^{-x}$ for $x\in[-1,1]$.
\end{enumerate}}
\end{proposition}

%
\section{Properties of Erdös-Renyi graphs}
Recall that in our setup, we have $T+1$ Erdös-Renyi graphs $G_{t'} \sim \calG(n,p(t'))$ for $t' \in \calT$. Moreover, for any given $t \in [0,1]$, we have the union graph $\Gu \sim \calG(n,\pu))$ with $\pu$ as in \eqref{eq:pdel_def}. Also recall that $\psum := \sum_{t'\in\Nt}p(t')$.
%
%
\begin{lemma}\label{lem:prop_gnp}
Consider the events
\begin{enumerate}
    \item $\calA_1 = \set{\frac{n\pu}{2}\leq \dumin\leq \dumax\leq \frac{3n\pu}{2}}$,
    \item $\calA_2 = \set{\xid > \frac{1}{2}}$,
    \item $\calA_3 = \set{\cardEu \leq 2n^2\pu}$,
    \item $\calA_4 = \set{\frac{\psum}{2}\leq \Nmin\leq \Nmax \leq 2\psum}$.
\end{enumerate}
Then the following is true.
\begin{enumerate}
    \item There exists a constant $c_0 \geq 1$ such that if $\pu \geq c_0 \frac{\log n}{n}$, then $\prob(\calA_i) \geq 1-O(n^{-10})$ for $i=1,2,3$.
    
    \item There exists a constant $c_2 \geq 1$ such that if $\psum \geq c_2 \log n$, then $\prob(\calA_4) \geq 1-O(n^{-10})$. 
\end{enumerate}
\end{lemma}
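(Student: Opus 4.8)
The plan is to treat the four events one at a time, using the Chernoff bound and the Matrix Bernstein inequality recalled in Appendix~\ref{appendix:A}, and to choose the constants $c_0, c_2$ as large as the argument requires. The structural fact to exploit is that, because the comparison graphs $(G_{t'})_{t'\in\Nt}$ are mutually independent, the edge indicators $\indic{\{i,j\}\in\Eu}$, $i<j$, are i.i.d.\ $\mathrm{Bernoulli}(\pu)$ (so $\Gu$ is genuinely an Erd\"os--R\'enyi graph $\calG(n,\pu)$), whereas for a fixed pair $\{i,j\}$ the count $\cardNij=\sum_{t'\in\Nt}\indic{\{i,j\}\in\calE_{t'}}$ is a sum of independent Bernoulli variables with mean $\psum$, and $\{i,j\}\in\Eu$ holds precisely when $\cardNij\ge 1$.

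For $\calA_1$ I would write the degree of a vertex in $\Gu$ as a sum of $n-1$ i.i.d.\ $\mathrm{Bernoulli}(\pu)$ variables with mean $(n-1)\pu\ge\tfrac{c_0}{2}\log n$; the Chernoff bound with $\varepsilon=1/4$ gives a per-vertex deviation probability at most $2\exp(-(n-1)\pu/48)\le 2n^{-c_0/96}$, and a union bound over the $n$ vertices, combined with the elementary inequalities $\tfrac34(n-1)\ge\tfrac n2$ and $\tfrac54(n-1)\le\tfrac{3n}{2}$ (valid for $n\ge 3$), yields $\prob(\calA_1)\ge 1-2n^{1-c_0/96}$. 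For $\calA_3$, $\cardEu$ has mean $\mu=\binom n2\pu\in[\tfrac{n^2}{4}\pu,\tfrac{n^2}{2}\pu]$, so $\{\cardEu\ge 2n^2\pu\}\subseteq\{|\cardEu-\mu|\ge 0.9\mu\}$ and the Chernoff bound alone (no union bound) gives a failure probability at most $2\exp(-0.27\mu)\le 2\exp(-\tfrac{0.27c_0}{4}n\log n)$. For $\calA_4$, the Chernoff bound with $\varepsilon=1/2$ applied to each fixed pair gives $\prob(|\cardNij-\psum|\ge\psum/2)\le 2\exp(-\psum/12)\le 2n^{-c_2/12}$; a union bound over the $\binom n2$ pairs then shows that, with probability $1-O(n^{-10})$, $\cardNij\in[\tfrac{\psum}{2},\tfrac{3\psum}{2}]$ for \emph{every} pair, and restricting to the pairs with $\cardNij\ge 1$ --- i.e.\ to the edges of $\Eu$ --- yields $\tfrac{\psum}{2}\le\Nmin\le\Nmax\le 2\psum$. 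In each case the probability is $1-O(n^{-10})$ once $c_0$, respectively $c_2$, is taken large enough.

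The remaining event $\calA_2$ is the delicate one, and I expect it to be the main obstacle. I would combine $\calA_1$ with a spectral-norm bound on the centred adjacency matrix $E:=A_\delta(t)-\expec[A_\delta(t)]=\sum_{i<j}(\indic{\{i,j\}\in\Eu}-\pu)(e_ie_j^{\top}+e_je_i^{\top})$. Its summands are independent, zero-mean, of operator norm at most $1$, with matrix variance $\sum_{i<j}\expec[(\indic{\{i,j\}\in\Eu}-\pu)^2(e_ie_i^{\top}+e_je_j^{\top})]=\pu(1-\pu)(n-1)\,I$, so Matrix Bernstein (Theorem~\ref{thm:bernstein}) gives $\norm{E}_2\le C\sqrt{n\pu\log n}$ with probability $1-O(n^{-10})$ for a suitable absolute constant $C$. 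On the intersection with $\calA_1$ we have $\norm{D_\delta^{-1/2}(t)}_2^2\le 2/(n\pu)$, and since $\expec[A_\delta(t)]=\pu(\mathbf 1\mathbf 1^{\top}-I)$ the symmetric matrix $\mathcal L_{\mathrm{sym}}:=D_\delta^{-1/2}(t)A_\delta(t)D_\delta^{-1/2}(t)$ --- which is similar to $\Ld$, hence has the same eigenvalues --- decomposes as $\pu D_\delta^{-1/2}(t)\mathbf 1\mathbf 1^{\top}D_\delta^{-1/2}(t)-\pu D_\delta^{-1}(t)+D_\delta^{-1/2}(t)\,E\,D_\delta^{-1/2}(t)$, where the first term is rank-one and positive semidefinite, the second has norm at most $2/n$, and the third has norm at most $\tfrac{2}{n\pu}\cdot C\sqrt{n\pu\log n}=2C\sqrt{\tfrac{\log n}{n\pu}}$, which is at most $1/8$ once $n\pu\ge c_0\log n$ with $c_0$ large (and then $2/n\le 1/8$ as well, since $n\ge c_0\log n$). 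Two applications of Weyl's inequality then give $\lambda_2(\mathcal L_{\mathrm{sym}})\le 1/4$ and $\lambda_n(\mathcal L_{\mathrm{sym}})\ge -1/4$, hence $\lammax(\Ld)=\max\{\lambda_2(\mathcal L_{\mathrm{sym}}),-\lambda_n(\mathcal L_{\mathrm{sym}})\}\le 1/4<1/2$, i.e.\ $\xid>1/2$. The crux is to run the Matrix Bernstein and Weyl bookkeeping tightly enough that the hypothesis $n\pu\gtrsim\log n$ --- rather than a stronger sparsity assumption --- already pushes the perturbation below $1/8$; the rest is routine.
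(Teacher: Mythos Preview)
Your argument is correct. For $\calA_1$, $\calA_3$, and $\calA_4$ you take essentially the same route as the paper --- a multiplicative Chernoff bound followed, where needed, by a union bound --- with only cosmetic differences in the choice of $\varepsilon$ and in how carefully you track the $(n-1)$ versus $n$ discrepancy.

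The one genuine difference is $\calA_2$. The paper does not prove the spectral-gap bound at all; it simply invokes \cite[Lemma~7]{negahban2015rank} applied to $\Gu$. You instead give a self-contained argument: bound $\norm{A_\delta(t)-\expec[A_\delta(t)]}_2$ via Matrix Bernstein, pass to the symmetric normalized Laplacian using the degree concentration from $\calA_1$, and apply Weyl's inequality to the decomposition into a rank-one PSD part plus a small-norm perturbation. This is a legitimate and standard way to establish the spectral gap of $\calG(n,p)$ in the regime $np\gtrsim\log n$, and your bookkeeping is sound (the perturbation is $O(\sqrt{\log n/(n\pu)})+O(1/n)$, which is indeed pushed below any fixed threshold by taking $c_0$ large). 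The benefit of your approach is that it is fully self-contained within the tools already recalled in Appendix~\ref{appendix:A}; the paper's citation is shorter but offloads the work.
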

\begin{proof}
\begin{enumerate}
    \item This follows for $\calA_1$ from the Chernoff bound (see Appendix \ref{appendix:A})
    with $\varepsilon = 1/2$, and the union bound. For $\calA_2$, the statement follows directly by applying \citep[Lemma 7]{negahban2015rank} to $\Gu$. For $\calA_3$, this is again a standard use of the Chernoff bound with $\mu = {n \choose 2} \pu$ and $\varepsilon = 1/2$.
    
    \item Note that for any $i \neq j \in [n]$, $\cardNij$ is a sum of independent Bernoulli random variables and $\expec[\cardNij] = \sum_{t'\in \Nt}p(t') = \psum$. Then if $\psum \geq c_2 \log n$ for a suitably large constant $c_2 > 0$, it holds with probability at least $1-O(n^{-12})$ that
    \begin{equation*}
        \frac{\psum}{2}\leq  \cardNij\leq 2\psum.
    \end{equation*}
    %
    Now the union bound implies that
    \begin{equation*}
        \prob\left(\forall i \neq j: \  \frac{\psum}{2}\leq \cardNij \leq 2\psum\right) \geq 1-O(n^{-10}).
    \end{equation*}
    %
    %
    %
  Since $\Nmin \geq \min_{i \neq j} \cardNij$ and $\Nmax \leq \max_{i \neq j} \cardNij$ is always true, hence the statement follows.
\end{enumerate}
\end{proof}
%
%
\begin{proposition}\label{prop:pu}
    With $\pmin$ as in \eqref{eq:pmin_def}, we have for all $t \in [0,1]$ that  $\pu\gtrsim\min\set{1,\pmin\delta}$.
Therefore $\pu\gtrsim \frac{\log n}{n}$ if $\pmin\delta \gtrsim \frac{\log n}{n}$.
\end{proposition}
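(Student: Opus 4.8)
The plan is to control the product $\prod_{t' \in \Nt}(1-p(t'))$ from above. Two facts make this immediate: every factor satisfies $1 - p(t') \le 1 - \pmin$ by definition of $\pmin$ in \eqref{eq:pmin_def}, and there are at least $\delta$ factors since $\cardN \geq \delta$ holds for all $t \in [0,1]$ (as noted after \eqref{eq:bmax_delta}). Hence $\prod_{t' \in \Nt}(1-p(t')) \le (1-\pmin)^{\cardN} \le (1-\pmin)^{\delta}$, and so from \eqref{eq:pdel_def} we get $\pu \ge 1 - (1-\pmin)^{\delta}$.

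Next I would pass to the exponential using $1 - x \le e^{-x}$, valid for all real $x$, which gives $(1-\pmin)^\delta \le e^{-\pmin\delta}$ and therefore $\pu \ge 1 - e^{-\pmin\delta}$. The only remaining ingredient is the elementary inequality $1 - e^{-u} \ge (1-e^{-1})\min\{1,u\}$ for all $u \ge 0$: for $u \ge 1$ this follows from monotonicity of $u \mapsto 1 - e^{-u}$, and for $u \in [0,1]$ it follows from concavity of the same function on $[0,1]$ together with its values $0$ at $u=0$ and $1-e^{-1}$ at $u=1$. Applying this with $u = \pmin\delta$ yields $\pu \ge (1-e^{-1})\min\{1,\pmin\delta\}$, which is the claimed bound with absolute constant $1 - e^{-1}$.

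For the second assertion, suppose $\pmin\delta \gtrsim \frac{\log n}{n}$. Since $x \mapsto \min\{1,x\}$ is nondecreasing and $\frac{\log n}{n} \le 1$ for $n$ large enough, we obtain $\min\{1,\pmin\delta\} \gtrsim \frac{\log n}{n}$, and combining with the first part gives $\pu \gtrsim \frac{\log n}{n}$.

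I do not expect any genuine obstacle here, as the argument is entirely elementary; the only point requiring a little care is that the lower bound $\cardN \ge \delta$ on the number of factors is uniform in $t \in [0,1]$, so that the resulting estimate on $\pu$ holds for every $t$.
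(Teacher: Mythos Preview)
Your proof is correct and follows essentially the same route as the paper: bound the product by $(1-\pmin)^{\cardN}$, use $\cardN \geq \delta$ and $1-x \le e^{-x}$ to reach $\pu \ge 1 - e^{-\pmin\delta}$, then split according to whether $\pmin\delta \le 1$ or $\pmin\delta \ge 1$. The only cosmetic difference is that you package the two cases into the single inequality $1-e^{-u} \ge (1-e^{-1})\min\{1,u\}$, whereas the paper treats them separately (using $1-e^{-x} \ge x/2$ on $[0,1]$), yielding a slightly different absolute constant.
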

\begin{proof}
Starting with the definition of $\pu$ in \eqref{eq:pdel_def}, we have \rev{using Proposition \ref{prop:useful_ineg} that}
\begin{equation}
    \pu = 1-\prod_{t'\in\Nt}(1-p(t')) \geq 1-\left(1-\pmin\right)^{\cardN} \geq 1-e^{-\pmin\cardN} \geq 1-e^{-\pmin\delta}
\end{equation}
where the last inequality follows uses the fact  $\cardN \geq \delta$ for all $t$. 
Now there are two cases to distinguish.
\begin{itemize}
    \item If $\pmin\delta\leq 1$, then $1-e^{-\pmin\delta} \geq \frac{\pmin\delta}{2}$ using \rev{Proposition \ref{prop:useful_ineg}.}
    
    \item If $\pmin\delta\geq 1$, then $1-e^{-\pmin\delta}\geq 1-e^{-1}$. 
\end{itemize}
This concludes the proof.
\end{proof}

%
\section{Proofs of results in Section \ref{sec:analysis_l2}} \label{appsec:proofs_l2_analysis}
\subsection{Proof of Lemma \ref{lem:bounds_deterministic}} \label{appsubsec:proof_lem_bounds_deter}
Recall that the entries of $\Delta_1(t)$ are given by
\begin{equation*}
    \Deloneij = \left\{\begin{array}{cc}
        \displaystyle\frac{1}{\du\cardNij}\sum_{t' \in \Nij} \left(y_{ij}^*(t')-y_{ij}^*(t)\right) & \mbox{if} \quad \set{i,j} \in \Eu,  \\
        \displaystyle - \sum_{k \neq i} \Deloneik & \mbox{if} \quad i=j, \\
        0 & \mbox{otherwise.}
    \end{array}\right.
\end{equation*}
Let us denote $D_1(t)$ to be the diagonal matrix containing the elements $\Deloneii$ and $\Dbarone = \Delta_1(t)-D_1(t)$. As $D_1(t)$ is diagonal, we then have 
\begin{equation*}
    \norm{\Delta_1(t)}_2 \leq \norm{D_1(t)}_2 + \norm{\Dbarone}_2 \leq \max_i \abs{\Deloneii}+ \norm{\Dbarone}_F.
\end{equation*}
Let us first bound $\norm{\Dbarone}_F$. Assumption \ref{assump:smooth1} implies that for any $\set{i,j} \in \Eu$,
\begin{align*}
    \abs{\Dbaroneij}\leq  \frac{1}{\du\cardNij}\displaystyle\sum_{t' \in \Nij} \abs{y_{ij}^*(t')-y_{ij}^*(t)}
    \leq  \frac{1}{\du\cardNij}\displaystyle\sum_{t' \in \Nij}M\abs{t'-t} 
    \leq  \frac{M\delta}{T\du},
\end{align*}
which in turn implies $\norm{\Dbarone}_F \leq 2\frac{M\cardEu\delta}{T\du}$.
%
%
In order to bound $\norm{D_1(t)}_2$, we simply note that
\begin{align*}
    \abs{D_{1,ii}(t)} = \abs{ -\sum_{j\neq i} \Delta_{1,ij}(t)} \leq \dumax \max_{j \neq i} \abs{\Delta_{1,ij}(t)}\leq \dumax \frac{M\delta}{T\du}\leq \frac{M\delta}{T}
\end{align*}
since $\du \geq \dumax$. Hence $\norm{D_1(t)}_2 \leq \frac{M \delta}{T}$, and so
\begin{equation*}
    \norm{\Delta_1(t)}_2 \leq \frac{M\delta}{T}\left(1+2\frac{\cardEu}{\du}\right)\leq \frac{M\delta}{T}\left(1+2\frac{\cardEu}{\dumax}\right) \leq 4\frac{M\delta\cardEu}{T\dumax} \quad \left(\text{ since } 1 \leq 2\frac{\cardEu}{\dumax} \right).
\end{equation*}
\begin{flushright}
$\square$
\end{flushright}
\subsection{Proof of Lemma \ref{lem:bound_delta}} \label{appsubsec:proof_bound_delta}
Our goal is to bound $\norm{\Delta(t)}_2$ for the random matrix $\Delta(t) = \Pest-\Pbar$, whose entries are 
\begin{equation*}
    \Delta_{ij}(t) = \left\{\begin{array}{cc}
    \displaystyle\frac{1}{\du\cardNij}\sum_{t' \in \Nij}\left(y_{ij}(t')-y_{ij}^*(t')\right) & \mbox{if} \quad \set{i,j}\in\Eu \\
    - \displaystyle\sum_{k \neq i} \Delta_{ik}(t) & \mbox{if} \quad i=j\\
    0 & \mbox{otherwise.}
\end{array}\right.
\end{equation*}
Once again $\Delta(t)$ can be decomposed as $\Delta(t) = D(t) + \Dbar$ where $D(t)$ is a diagonal matrix  and $\Dbar$ contains its off-diagonal coefficients. Since 
$$\norm{\Delta(t)}_2 \leq \norm{D(t)}_2 + \norm{\Dbar}_2,$$
we will bound $\norm{D(t)}_2$ and $\norm{\Dbar}_2$ using ideas from the proof of \citep[Lemma 3]{negahban2015rank}.
\subsubsection{Bound on $\norm{D(t)}_2$}
The matrix $D(t)$ is diagonal, hence $\norm{D(t)}_2 = \max_{i \in [n]} \abs{D_{ii}(t)}$ where the diagonal entries are given by
\begin{equation*}
    D_{ii}(t) = -\sum_{k \neq i} \sum_{t' \in \Nik} \sum_{l=1}^L \frac{1}{L\cardNik \du}(y_{ik}^{(l)}(t') - y_{ik}^*(t')).
\end{equation*}
This is a sum of at most $L\dumax\Nmax$ independent and centered variables, which  almost surely lie in the interval $\left[ -\frac{1}{L\du\Nmin },\frac{1}{L\du\Nmin}\right]$. Thus, by applying Hoeffding's inequality (see Theorem \ref{thm:hoeffding} in Appendix \ref{appendix:A}) we obtain for any $s > 0$
\begin{align*}
    \prob\left(\abs{D_{ii}(t)}>s\right)  \leq 2 \exp\left(-\frac{2s^2 L^2\dusq\Nminsq}{4L\dumax \Nmax}\right) 
     = 2\exp\left(-\frac{L\dusq\Nminsq s^2}{2\Nmax\dumax}\right)
\end{align*}
which implies via a union bound that 
\begin{equation*}
    \prob(\norm{D(t)}_2 >s) \leq 2n\exp\left(-\frac{L\dusq \Nminsq s^2}{2\Nmax\dumax}\right).
\end{equation*}
Choosing $s = c_1\sqrt{\frac{\Nmax \dumax\log n}{L\dusq\Nminsq}}$ where $c_1\geq 2$ is a constant, we have that
\begin{equation*}
    \norm{D(t)}_2 \leq c_1\sqrt{\frac{\Nmax \dumax\log n}{L\dusq\Nminsq}}
\end{equation*}
with probability at least $1-2n^{1-\frac{c_1^2}{2}}$.

%
\subsubsection{Bound on $\norm{\Dbar}_2$ when $\dumax\Nmax \leq \log n$}
In order to bound $\norm{\Dbar}_2$, we first recall the standard inequality $\norm{M}_2 \leq \sqrt{\norm{M}_1\norm{M}_\infty}$, and also $\norm{M}_1 = \norm{M^\top}_\infty$. 
Since $\Dbar$ is skew-symmetric, it follows that $$ \norm{\Dbar}_2 \leq \sqrt{\norm{\Dbar}_1\norm{\Dbar^\top}_1} \leq \norm{\Dbar}_1.$$ 
Our aim now is to bound $\norm{\Dbar}_1$.
Recall that for every $i \neq j$,
\begin{equation*}
        \Dbarij = \left\{\begin{array}{cc}
        \displaystyle\frac{1}{\du\cardNij}\sum_{t' \in \Nij}\left(y_{ij}(t')-y_{ij}^*(t')\right) &  \mbox{if} \quad \set{i,j}\in\Eu\\
        0 & \mbox{otherwise.}
    \end{array}\right.
\end{equation*}
Now denoting for all $i$,
\begin{equation*}
    R_i(t) = \sum_{j \neq i}\sum_{t' \in \Nij}\abs{\sum_{l=1}^L \frac{1}{L\du \cardNij}y_{ij}^{(l)}(t') - y_{ij}^*(t')},
\end{equation*}
clearly $\norm{\overline{\Delta}}_2 \leq \max_{i\in [n]} \sum_{j=1}^n \abs{\Dbarij} \leq \max_{i\in [n]} R_i(t)$ (since $\Dbarii = 0$). In order to bound $R_i(t)$ for any given $i \in [n]$, let us first denote 
$$\calS_i = \set{(\xi^{(i)}_{jt'})_{\set{i,j} \in \calE_{t'},t' \in \Nij} \, | \  \xi^{(i)}_{jt'} \in \set{-1,1}}.$$ 
Since the random variables 
$\frac{\xi^{(i)}_{jt'}}{\cardNij}\left(y_{ij}^{(l)}(t') - y_{ij}^*(t')\right)$ lie almost surely in the  interval $[-\frac{1}{\Nmin},\frac{1}{\Nmin}]$, we obtain via a simple union bound argument, along with Hoeffding's inequality, that
%
%
%
\begin{align*}
    \prob(R_i(t) > s) & \leq \sum_{\xi^{(i)} \in \calS_i} \prob\left(\sum_{j\neq i}\sum_{t' \in \Nij} \xi^{(i)}_{jt'} \sum_{l=1}^L
    \frac{1}{L\du \cardNij}(y_{ij}^{(l)}(t') - y_{ij}^*(t')) > s\right) \\
    & \leq \sum_{\xi^{(i)} \in \calS_i}\prob\left(\sum_{j \neq i}\sum_{t' \in \Nij} \xi^{(i)}_{jt'} \sum_{l=1}^L \frac{1}{\cardNij}(y_{ij}^{(l)}(t') - y_{ij}^*(t')) > L\du s\right) \\
    & \leq \sum_{\xi^{(i)} \in \calS_i}\exp\left( -\frac{L^2\dusq \Nminsq s^2}{2L\dumax \Nmax}\right) \\
    & \leq \sum_{\xi^{(i)}  \in \calS_i}\exp\left( -\frac{L\dusq\Nminsq s^2}{2\dumax\Nmax}\right).
\end{align*}
%
%
%
Since $\abs{\calS_i} \leq 2^{\dumax\Nmax}$ for each $i \in [n]$, we obtain
\begin{align*}
    \prob(R_i(t) > s) \leq 2^{\dumax\Nmax}\exp\left( -\frac{L\dusq\Nminsq s^2}{2\dumax\Nmax}\right) = \exp\left( \du\Nmax \ln 2-\frac{L\dusq\Nminsq s^2}{2\dumax\Nmax}\right).
\end{align*}
Applying a union bound over $[n]$ now leads to
\begin{equation*}
    \prob(\norm{\Dbar}_2 > s) \leq n\exp\left( \dumax\Nmax \ln 2-\frac{L\dusq\Nminsq s^2}{2\dumax\Nmax}\right).
\end{equation*}
%
%
%
%
Finally, since $\dumax\Nmax \leq \log n$ by assumption we obtain for any constant $c_2 > 2$ that
\begin{equation*}
    \prob\left(\norm{\Dbar}_2 > c_2\sqrt{\frac{\Nmax\dumax\log n}{L\dusq\Nminsq}}\right)\leq n^{2-\frac{c_2^2}{2}}.
\end{equation*}
\subsubsection{Bound on $\norm{\Dbar}_2$ when $\dumax\Nmax \geq \log n$}
In this case, we will use Bernstein's inequality for random matrices (see Theorem \ref{thm:bernstein} in Appendix \ref{appendix:A}). 
For each $i < j \in [n]$, let $Z^{ij,l}(t')$ be a $n \times n$ matrix with all entries equal to zero except for $Z^{ij,l}_{ij}(t')$ and $Z^{ij,l}_{ji}(t')$ defined as follows.
\begin{align*}
    Z^{ij,l}_{ij}(t') = \frac{C_{ij}^{(l)}(t')}{L\du\cardNij} \quad \text{and} \quad 
    Z^{ij,l}_{ji}(t') = -\frac{C_{ij}^{(l)}(t')}{L\du\cardNij}
\end{align*}
where $C_{ij}^{(l)}(t') = y_{ij}^{(l)}(t')-y_{ij}^*(t')$ if $\set{i,j}\in\Eu$, which is a centered Bernoulli variable, and $C_{ij}^{(l)}(t')=0$ otherwise. Hence, $\expec[Z^{ij,l}(t')]=0$. Now we can write 
$$\Dbar = \sum\limits_{\substack{i<j \\ \set{i,j}\in\Eu}}{\sum_{t' \in \Nij} \sum_{l=1}^L Z^{ij,l}(t')}.$$
To apply Bernstein's inequality, we first note that $\norm{Z^{ij,l}(t')}_2$ is uniformly bounded for all $t',i,j$.
\begin{align*}
    \norm{Z^{ij,l}(t')}_2 \leq \norm{Z^{ij,l}(t')}_F = \frac{\sqrt{2}\abs{C^{(l)}_{ij}(t')}}{L\du\cardNij} \leq \frac{\sqrt{2}}{L\du\Nmin} =: B.
\end{align*}
Since $Z^{ij,l}(t')$ are skew-symmmetric and independent matrices, we also have that
$$\nu = \norm{\expec[\Dbar \Dbar^\top]}_2 =  \left\|\sum\limits_{\substack{i<j \\ \set{i,j}\in\Eu}}{\sum_{t' \in \Nij} \sum_{l=1}^L -\expec [Z^{ij,l}(t')^2]}\right\|_2.$$
The matrix $(Z^{ij,l}(t'))^2$ only has two non-zero entries, which are at the locations  $(i,i)$ and $(j,j)$, and they are equal to 
$-\frac{C_{ij}^{(l)}(t')^2}{L^2\dusq\cardNij^2}$. Then its expectation (which also has only two non zero entries) satisfies
\begin{align*}
    -\expec[Z^{ij,l}(t')^2]_{i,i} & = -\expec[Z^{ij,l}(t')^2]_{j,j} 
     = \frac{\expec[C_{ij}^{(l)}(t')^2]}{L^2\dusq\cardNij^2} 
     \leq \frac{Var(C_{ij}^{(l)}(t'))}{L^2\dusq\Nminsq} 
     \leq \frac{1}{4L^2\dusq\Nminsq}.
\end{align*}
since $Var(C_{ij}^{(l)}(t')) = y_{ij}^*(t')(1-y_{ij}^*(t')) \leq 1/4$. 
Furthermore, $\expec[\Dbar \Dbar^\top]$ is a  diagonal matrix with positive entries, hence
\begin{equation*}
    \nu = \max_{k \in [n]} \left\lvert \left(\sum\limits_{\substack{i < j \\ \set{i,j} \in \Eu}}{\sum_{t'\in\Nij}\sum_{l=1}^L -\expec [Z^{ij}(t')^2]}\right)_{kk} \right\rvert \leq \frac{L\dumax\Nmax}{4L^2\dusq\Nminsq} = \frac{\Nmax\dumax}{4L\dusq\Nminsq}.
\end{equation*}
Finally applying Bernstein's inequality, it holds for any $s > 0$ that
\begin{align*}
    \prob(\norm{\Dbar}_2 >s) & \leq 2n\exp\left( -\frac{3s^2}{6\nu + 2Bs}\right) \\
    & \leq 2n\exp\left( -\frac{3s^2}{\frac{6\Nmax\dumax}{4L\dusq\Nminsq} + \frac{2s\sqrt{2}}{L\du\Nmin}}\right) \\
    & \leq 2n\exp\left( -\frac{6L\dusq\Nminsq s^2}{3\Nmax\dumax+4\sqrt{2}\Nmin\du s}\right).
\end{align*}
%
%
%
Choosing $s = c_3\sqrt{\frac{\Nmax\dumax\log n}{L\dusq\Nminsq}}$ for a constant $c_3 \geq 2$ and since $\dumax\Nmax\geq \log n$ by assumption, it holds that 
\begin{equation}
    \prob\left(\norm{\Dbar}_2 > c_3\sqrt{\frac{\Nmax\dumax\log n}{L\dusq\Nminsq}}\right) \leq 2n^{1-\frac{6 c_3^2}{3+4\sqrt{2}c_3}}.
\end{equation}
Finally, we observe that up to a multiplicative constant, the bounds for $\norm{D(t)}_2$ and $\norm{\Dbar}_2$ are the same. Thus, for  constants $c_1,c_2,c_3 \geq 2$, we have
\begin{equation*}
    \norm{\Delta(t)}_2 \leq \left(c_1+\max\set{c_2,c_3}\right)\sqrt{\frac{\Nmax\dumax\log n}{L\dusq\Nminsq}}.
\end{equation*}
with probability at least $1-2n^{1-\frac{c_1^2}{2}} - \max\set{n^{2-\frac{c_2^2}{2}},2n^{1-\frac{6 c_3^2}{3+4\sqrt{2}c_3}}}$.
In simpler words, there exists a constant $\tilde{C_1} \geq 15$ such that with probability at least $1-O(n^{-10})$,
\begin{equation}
    \norm{\Delta(t)}_2 \leq \tilde{C_1} \sqrt{\frac{\Nmax\dumax\log n}{L\dusq\Nminsq}}.
\end{equation}
\begin{flushright}
$\square$
\end{flushright}

\subsection{Proof of Lemma \ref{lem3}}
The steps below follow the proof of \citep[Lemma 4]{negahban2015rank} and are provided for completeness. We will first show $1-\lammax(\Pbar) \geq \frac{\xid\dumin}{4\du b^3(t)}$ using \citep[Lemma 6]{negahban2015rank}. To this end, let us denote $Q_{ij}(t) = \frac{1}{d_i(t)}\indic{\set{i,j} \in\Eu}$ where $d_{i}(t)$ denotes the degree of the $i$th vertex in $\Gu$ and $\indic{}$ is an indicator variable. Clearly, $Q(t)$ has a unique stationary distribution as it is irreducible (since $\Gu$ is connected), defined as $\mu_i(t) = \frac{d_i(t)}{\sum_{j=1}^n d_j(t)}$. It satisfies the detailed balanced equations of reversibility
$$\mu_i(t)Q_{ij}(t) = \mu_j(t)Q_{ji}(t).$$
Using \citep[Lemma 6]{negahban2015rank}, we obtain
\begin{equation} \label{eq:lammax_comp_ineq}
    \frac{1-\lammax(\Pbar)}{1-\lammax(Q(t))} \geq \frac{\alpha}{\beta} \quad \text{with } \alpha = \min_{\set{i,j}\in\Eu} \frac{\pistari\Pbarij}{\mu_i(t)Q_{ij}(t)}, \quad \beta = \max_i \frac{\pistari}{\mu_i(t)}.
\end{equation}
Note that the transition matrix $Q$ is equal to the Laplacian $\Ld$ of the graph $\Gu$ and so $1-\lammax(Q(t)) = \xid(t)$. We now need to suitably bound $\alpha$ and $\beta$. 

Denoting $d_i(t)$ to be the degree of node $i$ in $\Gu$, clearly $\sum_{j=1}^n d_j(t) = 2\cardEu$.  Hence we have 
\begin{equation*}
    \mu_i(t)Q_{ij}(t) = \frac{1}{\sum_{k=1}^n d_k(t)} \leq \frac{1}{\cardEu} \, \text{for } \set{i,j}\in\Eu \quad \text{and} \quad \mu_i(t) = \frac{d_i(t)}{2\cardEu} \geq \frac{\dumin}{2\cardEu} \, \text{for } i \in [n]
\end{equation*}
%
%
It is also easy to see that
\begin{equation*}
    \pistari\Pbarij \geq \frac{1}{2\du nb^2(t)}\, \text{for } \set{i,j}\in\Eu, \quad \text{and} \quad\pistari \leq \frac{b(t)}{n}\, \text{for } i \in [n]
\end{equation*}
thus leading to the bounds
\begin{equation*}
    \alpha \geq \frac{\cardEu}{2\du nb^2(t)} \quad \text{and}\quad \beta \leq \frac{2b(t)\cardEu}{n\dumin}.
\end{equation*}
Plugging this in \eqref{eq:lammax_comp_ineq}  leads to
\begin{equation*}
    1-\lammax(\Pbar) \geq \frac{\xid\cardEu n\dumin}{4\du nb^3(t)\cardEu} \geq \frac{\xid\dumin}{4\du b^3(t)} > 0
\end{equation*}
which together with \eqref{eq:cond_lem3} readily leads to the stated lower bound on $1-\rho(t)$.
%
%
%
%
\begin{flushright}
$\square$
\end{flushright}
\subsection{Proof of Lemma \ref{lem:5}}

We will bound $\norm{\pistar^\top\Delta(t)}_2$, using the same ideas as in the proof of \citep[Theorem 9]{Chen_2019}. Recall that the entries of $\Delta(t)$ are given by
\begin{equation*}
    \Delta_{ij}(t) = \left\{\begin{array}{cc}
        \displaystyle\frac{1}{\du\cardNij}\sum_{t' \in \Nij} (y_{ij}(t')-y^*_{ij}(t')) & \mbox{if} \quad \set{i,j} \in \Eu,  \\
        \displaystyle - \sum_{k \neq i} \Delta_{ik}(t)& \mbox{if} \quad i=j, \\
        0 & \mbox{otherwise.}
    \end{array}\right.
\end{equation*}
Let $\Delta^l(t)$ denote the lower triangular part of $\Delta(t)$ and  $\Delta^u(t)$ its upper triangular part, both having zeros on the diagonal. Let us define the diagonal matrices 
\begin{equation}
    \Delta^{diag,l}(t) = -\diag{\left(\sum_{j<i}\Delta_{ij}(t)\right)_{i=1\dots n}} \quad \text{and} \quad \Delta^{diag,u}(t) = -\diag{\left(\sum_{j>i}\Delta_{ij}(t)\right)_{i=1\dots n}}.
\end{equation}
Note that both these matrices have independent entries. The matrix $\Delta(t)$ is now decomposed as 
\begin{equation*}
    \Delta(t) = \Delta^l(t) + \Delta^u(t) + \Delta^{diag,l}(t) + \Delta^{diag,u}(t)
\end{equation*}
and the triangle inequality implies
\begin{equation}
    \norm{\pistar^\top\Delta(t)}_2 \leq \underset{= I_l}{\underbrace{\norm{\pistar^\top\Delta^l(t)}_2}} + \underset{= I_u}{\underbrace{\norm{\pistar^\top\Delta^u(t)}_2}} + \underset{= I_{diag,l}}{\underbrace{\norm{\pistar^\top\Delta^{diag,l}(t)}_2}} + \underset{= I_{diag,u}}{\underbrace{\norm{\pistar^\top\Delta^{diag,u}(t)}_2}}.
\end{equation}
The goal is to bound each of these terms separately, using the same method for all of them. Let us describe the steps for the first term $I_l = \norm{\pistar^\top\Delta^l(t)}_2$.  For all $j \in [n]$,
\begin{align*}
\left[\pistar^\top\Delta^l(t)\right]_j =  \sum_{i=1}^n \pistari \Delta^l_{ij}(t) = \sum_{i>j} \pistari \Delta_{ij}(t).
\end{align*}
For any pair $\set{i,j} \in \Eu$, $\Delta_{ij}(t)$ is a sum of at most $L\Nmax$ independent centered random variables, thus $\left[\pistar^\top\Delta^l(t)\right]_j$ is a sum of at most $L\Nmax \djl$ independent centered random variables, where $\djl$ is defined as 
$$\djl := \lvert \set{i\in [n] \ | \ \set{i,j} \in \Eu \text{ and } i>j}\rvert.$$
By applying Hoeffding's inequality we obtain for any $s>0$,
\begin{align*}
\prob(\abs{\left[\pistar^\top\Delta^l(t)\right]_j} > s ) & \leq 2\exp\left( -\frac{2L^2\Nminsq \dusq s^2}{4L\Nmax \djl\norm{\pistar}^2_\infty}\right) \\
& \leq 2\exp\left( -\frac{L\Nminsq \dusq s^2}{2\Nmax \djl\norm{\pistar}^2_\infty}\right).
\end{align*}

Hence, $\left[\pistar^\top\Delta^l(t)\right]_j$ can be seen as a sub-Gaussian random variable with sub-Gaussian norm at most $c_1\sigma_j$ with $c_1 > 0$ a constant, and $\sigma^2_j = \frac{2\djl\Nmax\norm{\pistar}^2_\infty}{L\dusq\Nminsq}$. As $\djl < \dumax$, it holds that
\begin{equation*}
\sigma^2_j \leq 2\frac{\Nmax\dumax}{L\dusq\Nminsq}\norm{\pistar}_\infty^2
\end{equation*}
Denoting $\sigma^2 = 2c_1\frac{\Nmax\dumax}{L\dusq\Nminsq}\norm{\pistar}_\infty^2$, then we see that $\pistar^\top\Delta^l(t)$ is a random vector with independent, centered $\sigma$- sub-gaussian entries. Then,
$$I_l^2 = \norm{\pistar^\top\Delta^l(t)}^2_2 = \sum_{j=1}^n [\pistar^\top\Delta^l(t)]_j^2$$
is a quadratic form of a sub-gaussian vector, so its expectation satisfies $\expec[I_l^2] \leq c_2n\sigma^2$ for some constant $c_2 > 0$.

Using the Hanson-Wright inequality \citep[Theorem 1.1]{rudelson2013hanson}, we have for any $s>0$ that
\begin{equation*}
\prob(\lvert I_l^2 - \expec[I_l^2]\rvert > s ) \leq 2\exp\left( -c_3\min{\left(\frac{s^2}{\sigma^4n},\frac{s}{\sigma^2}\right)} \right) = 2\exp\left( -c_3\frac{s}{\sigma^2}\min{\left(\frac{s}{\sigma^2n},1\right)} \right)
\end{equation*}
for $c_3 >0$ a constant.
Choosing $s = \sqrt{\frac{C_1}{c_3}}\sigma^2\sqrt{n\log n}$ with $C_1\geq1$ a constant, note that $\frac{s}{n\sigma^2} = \sqrt{\frac{C_1}{c_3}}\sqrt{\frac{\log n}{n}} \leq 1$ if $n \geq \frac{C_1}{c_3}\log n$, and so, 
\begin{equation*}
\prob\left(\lvert I_l^2 - \expec[I_l^2]\rvert > \sqrt{\frac{C_1}{c_3}}\sigma^2\sqrt{n\log n} \right) \leq 2\exp^{-C_1\log n} = 2n^{-C_1}.
\end{equation*}
Then, it holds with probability atleast  $1-O(n^{-C_1})$ that
\begin{align*}
I_l^2 & \leq \expec[I_l^2] + \sigma^2\sqrt{n\log n} \\
& \leq c_2n\sigma^2 + \sqrt{\frac{C_1}{c_3}}\sigma^2\sqrt{n\log n} \\
& \leq n\sigma^2(c_2+\sqrt{\frac{C_1}{c_3}}\sqrt{\frac{\log n}{n}}) \\
& \leq C_2\frac{n\Nmax\dumax}{L\dusq\Nminsq}\norm{\pistar}^2_\infty \\
& \leq C_2\frac{\Nmax\dumax b^2(t)}{L\dusq\Nminsq}\norm{\pistar}^2_2
\end{align*}
as $\norm{\pistar}_2^2 \geq n \pistarmin^2 \geq \frac{n}{b^2(t)}\pistarmax^2 = \frac{n}{b^2(t)}\norm{\pistar}_{\infty}^2$.

The same analysis leads to the same bound for $I_u^2, I_{diag,l}^2,I_{diag,u}^2$. Thus for any constant $C_1 \geq 1$, there exists a constant $C_2 \geq 1$ such that it holds with probability greater than $1-O(n^{-C_1})$ that
\begin{equation}
\norm{\pistar^\top\Delta(t)}_2 \leq C_2 b(t)\sqrt{\frac{\Nmax\dumax}{L\dusq\Nminsq}}\norm{\pistar}_2.
\end{equation}
%

%
\section{Proofs of results in Section \ref{sec:analysis_linf}} \label{appsec:linf_analysis}
The proofs are largely inspired from the proofs in \citep[Section C]{Chen_2019}, which are adapted to our problem setup.
\subsection{Proof of Lemma \ref{lem:I0}}
Consider any given collection of graphs $(G_{t'})_{t' \in \Nt}$. Then following the same ideas as in \citep[Lemma 2]{Chen_2019}, we have for any given $m \in [n]$ that 
\begin{align*}
    I_0^m & = \sum_{j=1}^n \pistarj(\Pestjm-\Peststarjm) \\
    & = \sum_{j:j\neq m} \pistarj(\Pestjm-\Peststarjm) + \pistarm(\Pestmm - \Peststarmm) \\
    & = \sum_{j:j\neq m} (\pistarj+\pistarm)(\Pestjm-\Peststarjm) \\ 
    & = \sum_{j:j\neq m} (\pistarj+\pistarm)\sum_{t' \in \Njm}\sum_{l=1}^L \left(\frac{y_{jm}^{(l)}(t') - y_{jm}^*(t')}{L\du\cardNjm} \right).
\end{align*}
Applying Hoeffding's inequality, it holds for any constant $C_1 \geq 2$,

\begin{align*}
    \prob(\abs{I_0^m}>C_1\sqrt{\frac{\Nmax\dumax\log n}{L\dusq\Nminsq}}\norm{\pistar}_{\infty}) \leq 2n^{-\frac{C_1^2}{2}}.
\end{align*}
If $G_{t'} \sim \calG(n,p(t'))$ for all  $t' \in \Nt$, then using Lemma \ref{lem:prop_gnp}, it holds with probability at least $1-O(n^{-10})$ that $\frac{\Nmax\dumax}{\dusq\Nminsq} \leq \frac{4}{3n\pu\psum}$. Hence after a union bound over $[n]$, we conclude that there exists a suitably large constant $C_2 \geq 1$ such that 
\begin{align*}
    \prob\left(\forall m \in [n]: \ \abs{I_0^m}>C_2\sqrt{\frac{\log n}{Ln\pu\psum}}\norm{\pistar}_{\infty}\right) \leq O(n^{-9}).
\end{align*}

\begin{flushright}
$\square$
\end{flushright}
\subsection{Proof of Lemma \ref{lem:I1}}
For any given collection of graphs $(G_{t'})_{t' \in \Nt}$, recall that the entries of $\Peststar - \Pbar$ have already been bounded in 
the proof of Lemma \ref{lem:bounds_deterministic} as follows
\begin{equation}\label{eq:deterministic_infty_bound}
    \abs{\Peststarij-\Pbarij} \leq \left\lbrace\begin{array}{cc}
        \frac{M\delta}{T\du} & \text{ if } i\neq j, \, \set{i,j}\in\Eu \\
        \frac{M\delta}{T} & \text{ if } i=j \\
        0 &  \text{otherwise.}
    \end{array}\right.
\end{equation}
This implies for all $m \in [n]$ that
\begin{align*}
    \abs{I_1^m} = \abs{\sum_{j:j \neq m} \pistarj(\Peststarjm-\Pbarjm} \leq  \norm{\pistar}_{\infty}\left(\dumax \frac{M\delta}{T\du} + \frac{M\delta}{T} \right). 
\end{align*}
Due to Lemma \ref{lem:prop_gnp}, we know that $3n\pu = \du \geq \dumax$ holds with probability at least $1-O(n^{-10})$. This leads to the statement in Lemma \ref{lem:I1}.
\begin{flushright}
$\square$
\end{flushright}
\subsection{Proof of Lemma \ref{lem:I2}}
We will follow the steps in \citep[Lemma 3]{Chen_2019}. 
Recalling that $I_2^m = \left(\piestm-\pistarm \right)\Pestmm$, let us bound $\Pestmm$ using the decomposition
\begin{equation*}
    \Pestmm = \left(\Pestmm-\Peststarmm\right) + \Peststarmm.
\end{equation*}
For any given collection of graphs $(G_{t'})_{t' \in \Nt}$, note that
\begin{equation*}
    \Pestmm-\Peststarmm = -\frac{1}{L\du}\sum_{j:j\neq m}\sum_{t'\in\Njm}\sum_{l=1}^L \frac{1}{\cardNjm}\left(y_{mj}^{(l)}(t') - y_{mj}^*(t')\right)
\end{equation*}
is a sum of independent, centered random variables. 
Then Hoeffding's inequality implies for each $m \in [n]$,
\begin{align*}
    \prob\left(\abs{\Pestmm-\Peststarmm}>C_1\sqrt{\frac{\Nmax\dumax\log n}{L\dusq\Nminsq}}\right) \leq 2n^{-\frac{C_1^2}{2}}
\end{align*}
for any constant $C_1\geq 2$.

If $G_{t'} \sim \calG(n,p(t'))$ for all  $t' \in \Nt$, then once again combining this result with Lemma \ref{lem:prop_gnp}, we see  (after a union bound over $[n]$) that there exists a suitably large constant $C_2 \geq 1$ such that with probability at least $1-O(n^{-9})$, 
\begin{equation*}
    \abs{\Pestmm-\Peststarmm}\leq C_2\sqrt{\frac{
    \log n}{Ln\pu\psum}}, \quad \forall m \in [n].
\end{equation*}
Now we bound $\abs{\Peststarmm}$. Recalling that $\bmax = \max_{t' \in \Nt}b(t')$ and $b(t') = \max_{i,j \in [n]}\frac{w^*_{t',i}}{w^*_{t',j}}$, we have for any given collection of graphs $(G_{t'})_{t' \in \Nt}$ that
all $m \in [n]$, 
\begin{align*}
    \Peststarmm & = 1-\sum_{j:j\neq m}\frac{1}{\du\cardNjm}\sum_{t'\in\Njm}y^*_{mj}(t') \\
    & = 1 - \sum_{j:j\neq m}\sum_{t'\in\Njm}\frac{1}{\du\cardNjm}\left(\frac{w^*_{t'j}}{w^*_{t'j}+w^*_{t'm}}\right) \\
    & \leq 1 - \sum_{j:j\neq m}\sum_{t'\in\Njm}\frac{1}{\du\cardNjm}\left(\frac{1}{1+b(t')}\right) \\
    & \leq 1- \frac{\dumin}{\du}\frac{1}{2\bmax}.
\end{align*}
If $G_{t'} \sim \calG(n,p(t'))$ for all $t' \in \Nt$, then we have using Lemma \ref{lem:prop_gnp} that $\dumin \geq n\pu/2$ with probability at least $1-O(n^{-10})$, this in turn implies that $\abs{\Peststarmm} \leq 1-\frac{1}{12\bmax}$ for all $m \in [n]$. 

Hence, we conclude that there exists a suitably large constant $C_2 \geq 1$ such that with probability at least $1-O(n^{-9})$,
\begin{equation*}
    \abs{I_2^m} \leq \left(1-\frac{1}{12\bmax} +C_2\sqrt{\frac{
    \log n}{Ln\pu\psum}} \right)\norm{\piest-\pistar}_{\infty}, \quad \forall m \in [n].
\end{equation*}
\begin{flushright}
$\square$
\end{flushright}
\subsection{Proof of Lemma \ref{lem:I3}}
This proof follows the lines of the proof of \citep[Lemma 4]{Chen_2019}. Consider any given collection of graphs $(G_{t'})_{t' \in \Nt}$. Using the properties of $\norm{.}_{\pistar}$ in \eqref{eq:bound_pinorm} and Theorem \ref{thm:thm8} with the matrices $\Pest,\Pmest,\Pbar$, it holds for all $m \in [n]$ that

\begin{align}
    \norm{\piest-\pimest}_2 & \leq \frac{1}{\sqrt{\pistarmin}} \left(\frac{1}{1-\lammax(\Pbar)-\norm{\Pest-\Pbar}_{\pistar}} \right) \norm{\pimest^\top(\Pmest-\Pest)}_{\pistar} \nonumber \\
    & \leq \sqrt{\frac{\pistarmax}{\pistarmin}} \left(\frac{8\du b^3(t)}{\xid\dumin} \right)\norm{\pimest^\top(\Pmest-\Pest)}_2 \label{eq:init_bd_lem9}
\end{align}
where we used \eqref{eq:gamma}. 

In order to bound $\norm{\pimest^\top\left(\Pmest-\Pest\right)}_2$, we need to get rid of the statistical dependency between $\Pmest$ and $\pimest$. To do so, let us introduce $\Pmg$, defined for $i\neq j$ as
\begin{equation*}
    \Pmgij = \left\lbrace \begin{array}{cc}
        \Pestij & \text{if }i\neq m, j\neq m \\
        \frac{1}{\du}\frac{w_{t,j}^*}{w_{t,j}^*+w_{t,i}^*}\indic{\set{i,j} \in \Eu} & \text{if }i= m \text{ or } j= m.
    \end{array} \right.
\end{equation*}
Its diagonal entries are defined as
\begin{equation*}
    \Pmgii =  1 - \displaystyle\sum_{k:k\neq i}\Pmgik
\end{equation*}
to ensure that $\Pmg$ is a transition matrix. 
Note that the $m^{th}$ line and columns of $\Pmest$ are the expectations of those in $\Pmg$. 
Moreover,  $\est{P}^{(m),\calG}_{.m}(t) = \bar{P}_{.m}(t)$ and $\est{P}^{(m),\calG}_{m.}(t) = \bar{P}_{m.}(t)$. Now starting with the decomposition
\begin{equation*}
    \pimest^\top(\Pmest-\Pest) = \underset{J_1^m}{\underbrace{\pimest^\top\left(\Pest-\Pmg\right)}} + \underset{J_2^m}{\underbrace{\pimest^\top\left(\Pmg-\Pmest\right)}}
\end{equation*}
we will separately bound $\norm{J_1^m}_2$,  $\norm{J_2^m}_2$.
\subsubsection{Bound on $\norm{J_1^m}_2$}
Consider any given collection of graphs $(G_{t'})_{t' \in \Nt}$. Since $\est{P}^{(m),\calG}_{.m}(t) = \bar{P}_{.m}(t)$, we have the decomposition
\begin{align*}
    J^m_{1,m} = & \sum_{j=1}^n \pimestj\left(\Pestjm-\Pbarjm\right) \\
    = & \sum_{j:j\neq m} \pimestj\left(\Pestjm-\Pbarjm\right) + \pimestm\left(\Pestmm-\Pbarmm\right) \\
    = & \sum_{j:j\neq m} (\pimestj+\pimestm)\left(\Pestjm-\Pbarjm\right) \\
    = & \sum_{j:j\neq m} (\pimestj+\pimestm)\left(\Pestjm-\Peststarjm\right)+ \sum_{j:j\neq m} (\pimestj+\pimestm)\left(\Peststarjm-\Pbarjm\right). 
\end{align*}
Since $\pimest$ is independent of $(\Pestjm-\Peststarjm)_{j \neq m}$, the first term can be bounded using Hoeffding's inequality as 
\begin{equation*}
    \prob\left( \abs{\sum_{j:j\neq m} (\pimestj+\pimestm)\left(\Pestjm-\Peststarjm\right)} > C_1 \norm{\pimest}_{\infty}\sqrt{\frac{\Nmax\dumax\log n}{L\dusq\Nminsq}}\right) \leq 2n^{-\frac{C_1^2}{2}}
\end{equation*}
for any constant $C_1 \geq 2$. 
%
%
Moreover, due to \eqref{eq:deterministic_infty_bound}, we have for all $m$ that
\begin{equation*}
    \sum_{j:j\neq m} (\pimestj+\pimestm)\left(\Peststarjm-\Pbarjm\right) \leq 2\dumax\left(\frac{M\delta}{T\du} \right) \norm{\pimest}_{\infty}. 
\end{equation*}
Hence for any given $m \in [n]$, it holds with probability at least $1-O(n^{-\frac{C_1^2}{2}})$ that
\begin{equation}\label{eq:J1m_m}
    \abs{J^m_{1,m}} \leq \left(\frac{2\dumax M\delta}{T\du}   + C_1 \sqrt{\frac{\Nmax\dumax\log n}{L\dusq\Nminsq}}\right)\norm{\pimest}_{\infty}.
\end{equation}
Let us now bound the other coefficients of $J_1^m$. For any $j \neq m$,
\begin{align*}
    J^m_{1,j} = &  \sum_{i:i \neq j} \pimesti(\Pestij - \Pmgij) + \pimestj\left(\Pestjj-\Pmgjj\right) \\
    = & \pimestm\left(\Pestmj-\Pmgmj\right) + \pimestj\left(\Pestjj-\Pmgjj\right) \\
     = & (\pimestm + \pimestj)\left( \Pestmj-\Peststarmj + \Peststarmj-\Pbarmj\right) \\
   \implies \abs{J^m_{1,j}} \leq & \  2\norm{\pimest}_{\infty}\left(\abs{\Pestmj-\Peststarmj}+\frac{M\delta}{T\du}\right)
\end{align*}
where we used \eqref{eq:deterministic_infty_bound} and the fact that the $m^{th}$ rows of $\Pmg$ and $\Pbar$ are identical. Using Hoeffding's inequality and a union bound, we have
\begin{equation*}
    \prob\left(\max_{j \neq m} \abs{\Pestmj-\Peststarmj} > C_2\sqrt{\frac{\Nmax\log n}{Ld^2_{\delta}(t)\Nminsq}}\right) \leq 2n^{1-\frac{C_2^2}{2}}
\end{equation*}
for any constant $C_2 \geq 2$. Hence, for any $j \neq m$, we have with probability at least $1-O(n^{1-\frac{C_2^2}{2}})$ that
\begin{equation}\label{eq:J1m_j}
    \abs{J_{1,j}^m} \leq \left\lbrace\begin{array}{cc}
        2\norm{\pimest}_{\infty}\left(\frac{M\delta}{T\du}+C_2\sqrt{\frac{\Nmax\log n}{Ld^2_{\delta}(t)\Nminsq}} \right) & \text{ if } \set{j,m}\in\Eu \\
        0 & \text{ otherwise.}
    \end{array}\right.
\end{equation}

Finally combining \eqref{eq:J1m_m},  \eqref{eq:J1m_j} along with Lemma \ref{lem:prop_gnp}, we see after taking a union bound over $[n]$ that when $G_{t'} \sim \calG(n,p(t'))$ for all  $t' \in \Nt$, then there exists a suitably large constant $C_3 \geq 1$ such that with probability at least $1-O(n^{-9})$, 
\begin{align}\label{eq:bound_J1M}
    \norm{J_1^m}_2 
    \leq \left(\frac{4M\delta}{T}+C_3\sqrt{\frac{\log n}{Ln\pu\psum}} \right)\norm{\pimest}_{\infty} \quad \forall m \in [n].
\end{align}
\subsubsection{Bound on $\norm{J_2^m}_2$}
Consider again any given collection of graphs $(G_{t'})_{t' \in \Nt}$. Note that the off-diagonal entries of $\Pmest-\Pmg$ are non zero if and only if they belong to the $m^{th}$ row /column. Then for $i\neq j$ with $i=m$ or $j=m$, we have
\begin{equation*}
    \left(\Pmest-\Pmg\right)_{ij} = \frac{1}{\du}\left(\pu-\indic{\set{i,j}\in\Eu}\right)y^*_{ij}(t)
\end{equation*}
Moreover, the diagonal entries are given as
\begin{equation*}
   \left(\Pmest-\Pmg\right)_{ii} = \left\lbrace\begin{array}{cc}
    -\frac{1}{\du}\sum_{j:j\neq m} \left(\pu-\indic{\set{m,j}\in\Eu}\right)y^*_{mj}(t) & \text{ if } i=m \\
    -\frac{1}{\du}\left(\pu-\indic{\set{i,m}\in\Eu}\right)y^*_{im}(t) & \text{ if } i\neq m.
   \end{array}\right.
\end{equation*}
Let us first show that $\pistar^\top\left(\Pmest-\Pmg\right)=0$ by considering the cases below.
\begin{enumerate}
\item For any $j\neq m$,
\begin{align*}
    \left[\pistar^\top\left(\Pmest-\Pmg\right)\right]_j =& \frac{\pistarm}{\du}(\pu-\indic{\set{m,j} \in \Eu})y_{mj}^*(t) - \frac{\pistarj}{\du}(\pu-\indic{\set{j,m} \in \Eu})y_{jm}^*(t) \\
    =& \frac{\pu-\indic{\set{m,j} \in \Eu}}{\du}\left(\pistarm y_{mj}^*(t) - \pistarj y_{jm}^*(t)\right) \\
    = & 0.
\end{align*}
This equality comes from the definition of $\pistar$ and $y_{jm}^*(t)$.

\item For $j=m$, 
\begin{align*}
    &\left[\pistar^\top\left(\Pmest-\Pmg\right)\right]_m \\
    =& -\frac{\pistarm}{\du}\sum_{k:k\neq m}(\pu-\indic{\set{m,k} \in \Eu})y_{mk}^*(t) 
    + \frac{1}{\du}\sum_{k:k\neq m}\pistark(\pu-\indic{\set{k,m} \in \Eu})y_{km}^*(t) \\
    =& \frac{1}{\du}\sum_{k:k\neq m}\left(\pu-\indic{\set{k,m} \in \Eu}\right)\left(\pistark y_{km}^*(t) - \pistarm y_{mk}^*(t)\right) \\
    = & 0.
\end{align*}
\end{enumerate}
Hence, $\pistar^\top\left(\Pmest-\Pmg\right)=0$ and it holds that
\begin{equation*}
    J_2^m = \left(\pimest-\pistar\right)^\top\left(\Pmest-\Pmg\right).
\end{equation*}
Now for $j \neq m$,
\begin{align*}
\abs{J^m_{2,j}} =& \abs{\left(\pimestm-\pistarm\right) \left(\frac{\pu-\indic{\set{m,j} \in \Eu}}{\du}\right)y_{mj}^*(t) - \left(\pimestj-\pistarj\right) \left(\frac{\pu-\indic{\set{j,m} \in \Eu}}{\du}\right)y_{jm}^*(t)} \\
\leq & \left\lbrace\begin{array}{cc}
    \frac{4}{\du}\norm{\pimest-\pistar}_{\infty} & \text{if } \set{j,m}\in\Eu \\
    2\frac{\pu}{\du}\norm{\pimest-\pistar}_{\infty} & \text{if } \set{j,m}\notin\Eu.
\end{array}\right.
\end{align*}
Moreover, we can bound $\abs{J^m_{2,m}}$ as follows.
\begin{align*}
\abs{J^m_{2,m}} =& \left\lvert -\left(\pimestm-\pistarm\right)\sum_{j:j\neq m}\frac{1}{\du}\left(\pu-\indic{\set{m,j} \in \Eu}\right)y^*_{mj}(t)\right. \\
& \left. +\sum_{j:j \neq m} \left(\pimestj-\pistarj\right)\frac{1}{\du}\left(\pu-\indic{\set{j,m} \in \Eu}\right)y^*_{jm}(t)
\right\rvert \\
& \leq \abs{J_3^m} + \abs{J_4^m} 
\end{align*}
where
\begin{equation*}
    J_3^m = \sum_{j:j\neq m}\left(\pimestm-\pistarm\right)\frac{1}{\du}\left(\pu-\indic{\set{m,j} \in \Eu}\right)y^*_{mj}(t), 
\end{equation*}
and
\begin{equation*}
J_4^m = \sum_{j:j \neq m} \left(\pimestj-\pistarj\right)\frac{1}{\du}\left(\pu-\indic{\set{j,m} \in \Eu}\right)y^*_{jm}(t).
\end{equation*}
When $G_{t'} \sim \calG(n,p(t'))$ for all  $t' \in \Nt$, then denoting $\xi_j^{(m)} = \left(\pimestm-\pistarm\right)\frac{y^*_{mj}(t)}{\du}$, a bound on $J_3^m$ can be given by Bernstein's inequality, introducing the random variables $Z_j^{(m)} = \xi_j^{(m)} (\pu-\indic{\set{j,m}\in\Eu})$. Since for all $j \neq m$,  $$\abs{Z_j^{(m)}} \leq \frac{1}{\du}\norm{\pimest-\pistar}_{\infty}, \quad \expec[Z_j^{(m)}]^2 \leq \frac{\norm{\pimest-\pistar}_{\infty}^2}{\dusq} \pu$$ hence 
%
%
there exists a constant $c \geq 2$ such that it holds with probability at least $1-2n^{-3c/2}$ that
\begin{equation}\label{eq:J3m_bound}
    \abs{J_3^m} \leq c\frac{\sqrt{n\pu\log n}+\log n}{\du}\norm{\pimest-\pistar}_{\infty}.
\end{equation}
%

The same bound holds for $\abs{J_4^m}$. Using Lemma \ref{lem:prop_gnp}, we then see that there exists a suitably large constant $C_4 \geq 1$ such that with probability at least $1-O(n^{-10})$,
\begin{equation*}
    \norm{J_2^m}_2 \leq \left(C_4\frac{\sqrt{n\pu\log n}+\log n}{\du} + \frac{4\sqrt{\du}}{\du} + \frac{2\pu\sqrt{n}}{\du}\right)\norm{\pimest-\pistar}_{\infty}.
\end{equation*}
Taking the union bound over $[n]$ and performing a minor simplification of the previous bound, we finally observe that with with probability at least $1-O(n^{-9})$, 
%
\begin{equation}\label{eq:bound_J2M}
    \norm{J_2^m}_2 \leq C_5\sqrt{\frac{\log n}{n\pu}}\norm{\pimest-\pistar}_{\infty} \quad \forall m \in [n].
\end{equation}
for some suitably large constant $C_5 \geq 1$.

%
\subsubsection{Putting it together}
Using Lemma \ref{lem:prop_gnp}, we know that with probability at least $1-O(n^{-10})$,  
\begin{equation} \label{eq:tmp1_bd}
  \sqrt{\frac{\pistarmax}{\pistarmin}} \left(\frac{8\du b^3(t)}{\xid\dumin} \right)   \leq 96b^{\frac{7}{2}}(t).
\end{equation}
Finally, combining \eqref{eq:init_bd_lem9},  \eqref{eq:bound_J1M},  \eqref{eq:bound_J2M} and \eqref{eq:tmp1_bd}, we have that with probability at least $1-O(n^{-9})$, it holds for all $m \in [n]$,  
\begin{align*}
    \norm{\pimest-\piest}_2 \leq & 96b^{\frac{5}{2}}(t)(\norm{J_1^m}_2 + \norm{J_2^m}_2) \\
    \leq & 96b^{\frac{5}{2}}(t)\left(\frac{4M\delta}{T} + C_3\sqrt{\frac{\log n}{Ln\pu\psum}} \right)\norm{\pimest}_{\infty} \\
    & + 96C_5b^{\frac{5}{2}}(t)\sqrt{\frac{\log n}{n\pu}}\norm{\pimest-\pistar}_{\infty} \\
    \leq & 96b^{\frac{5}{2}}(t)\left(\frac{4M\delta}{T} + C_3\sqrt{\frac{\log n}{Ln\pu\psum}} \right)\norm{\pistar}_{\infty} \\
    & + 96b^{\frac{5}{2}}(t)\left(\frac{4M\delta}{T} + C_3\sqrt{\frac{\log n}{Ln\pu\psum}}+C_5\sqrt{\frac{\log n}{n\pu}}\right)\norm{\pimest-\pistar}_{\infty} \\
    \leq & 96b^{\frac{5}{2}}(t)\left(\frac{4M\delta}{T} + C_3\sqrt{\frac{\log n}{Ln\pu\psum}} \right)\norm{\pistar}_{\infty} \\
    & + 96b^{\frac{5}{2}}(t)\left(\frac{4M\delta}{T} + C_6\sqrt{\frac{\log n}{n\pu}}\right)\norm{\pimest-\pistar}_{\infty}
\end{align*}
for some suitably large constant $C_6 \geq 1$. Condition \eqref{eq:cond_lem9} further implies that for all $m \in [n]$, 
\begin{equation*}
    \norm{\pimest-\piest}_2 \leq 96b^{\frac{5}{2}}(t)\left(\frac{4M\delta}{T} + C_3\sqrt{\frac{\log n}{Ln\pu\psum}} \right)\norm{\pistar}_{\infty} + \frac{1}{2}\norm{\pimest-\pistar}_{\infty}.
\end{equation*}
Finally, the triangular inequality
\begin{equation*}
    \norm{\pimest-\pistar}_{\infty}\leq \norm{\pimest-\piest}_2 + \norm{\piest-\pistar}_{\infty}
\end{equation*}
implies that 
\begin{align*}
    &\norm{\pimest-\piest}_2 \\
    \leq & 96b^{\frac{5}{2}}(t)\left(\frac{4M\delta}{T} + C_3\sqrt{\frac{\log n}{Ln\pu\psum}} \right)\norm{\pistar}_{\infty} 
    + \frac{1}{2}\left(\norm{\pimest-\piest}_2 +\norm{\piest-\pistar}_{\infty}\right) \\
    \leq & 192b^{\frac{5}{2}}(t)\left(\frac{4M\delta}{T} + C_3\sqrt{\frac{\log n}{Ln\pu\psum}} \right)\norm{\pistar}_{\infty}+\norm{\piest-\pistar}_{\infty}.
\end{align*}

%
\subsection{Proof of Lemma \ref{lem:I4}}
Since we follow the same steps as in \citep{chen2020partial}, let us introduce some new notations in our particular setup. For all $t \in \calT$, all $i \neq j$ and all $l \in [L]$, let us denote
\begin{equation}\label{eq:y_tilde}
    \Tilde{y}_{ij}^{(l)}(t) \sim \calB\left(y^*_{ij}(t)\right) \quad \text{and} \quad \Tilde{y}_{ij}(t) = \frac{1}{L}\sum_{l=1}^L\Tilde{y}_{ij}^{(l)}(t).
\end{equation}
so for all $t'\in \Nt$, we have  
$$y_{ij}(t') = \Tilde{y}_{ij}(t')\indic{\set{i,j}\in\calE_{t'}} \leq \Tilde{y}_{ij}(t')\indic{\set{i,j}\in\Eu}.$$ %
Then $\abs{I_4^m}$ can be bounded as
\begin{align*}
    \abs{I_4^m} & = \abs{\sum_{j:j \neq m}\left(\pimestj-\pistarj\right)\frac{1}{L\du\cardNjm}\sum_{t'\in\Njm}\sum_{l=1}^L y_{jm}^{(l)}(t')} \\
    & \leq \sum_{j:j\neq m} \abs{\pimestj-\pistarj}\left[\frac{1}{L\du\cardNjm}\sum_{t'\in\Njm}\sum_{l=1}^L \Tilde{y}_{jm}^{(l)}(t')\right]\indic{\set{j,m}\in\Eu} \\
    & \leq \frac{\Nmax}{\Nmin}\sum_{j:j\neq m}\frac{1}{\du}\abs{\pimestj-\pistarj} \indic{\set{j,m}\in\Eu}.
\end{align*}    
When $G_{t'} \sim \calG(n,p(t'))$ for all  $t' \in \Nt$, then we know from Lemma \ref{lem:prop_gnp} that
\begin{equation}\label{eq:bound_nmaxI4}
    \frac{\Nmax}{\Nmin} \leq 4
\end{equation}
holds with probability at least $1-O(n^{-10})$.
Hence, to bound $\abs{I_4^m}$, it suffices to bound $$I_5^m := \sum_{j:j\neq m}\frac{1}{\du}\abs{\pimestj-\pistarj} \indic{\set{j,m}\in\Eu}.$$ To this end, let us first denote $\Gum$ to be the union graph $\Gu$ where the item $m$ has been removed, and $\Tilde{y}(t)$ to be the variables computed in \eqref{eq:y_tilde}. 
Then we have by triangle inequality that 
\begin{equation} \label{eq:I5m_bd}
    I_5^m \leq \expec[I_5^m \vert \Gum,\Tilde{y}] + \abs{I_5^m - \expec[I_5^m \vert \Gum,\Tilde{y}]}.
\end{equation}
The expectation can be bound using Cauchy-Schwarz inequality.
\begin{align*}
    \expec[I_5^m \vert \Gum,\Tilde{y}] = & \sum_{j\neq m} \frac{1}{\du}\abs{\pimestj-\pistarj}\pu \\
    \leq &\frac{\sqrt{n}\pu}{\du}\norm{\pimest-\pistar}_2 \\
    \leq& \frac{1}{3\sqrt{n}}\norm{\pimest-\pistar}_2 \\
    \leq &\frac{1}{3\sqrt{n}}\norm{\pimest-\piest}_2 + \frac{1}{3\sqrt{n}}\norm{\piest-\pistar}_2.
\end{align*}
A bound on the first term is given by Lemma \ref{lem:I3} and by Theorem \ref{thm:l2_gnp} for the second term. Hence there exist constants $C_3, \Tilde{C}_2 \geq 1$ so that with probability at least $1-O(n^{-9})$, we have for all $m \in [n]$ that
\begin{align}
    & \expec[I_5^m \vert \Gum,\Tilde{y}] \nonumber\\ 
    \leq & 64b^{\frac{5}{2}}(t)\left(\frac{4M\delta}{T\sqrt{n}} + C_3\sqrt{\frac{\log n}{Ln^2\pu\psum}} \right)\norm{\pistar}_{\infty}+\frac{1}{3\sqrt{n}}\norm{\piest-\pistar}_{\infty} \nonumber \\
    & + \frac{1}{\sqrt{n}}\left( 521\frac{M\delta nb^{\frac{7}{2}}(t)}{T} + 64\Tilde{C}_2 b^{\frac{9}{2}}(t)\sqrt{\frac{1}{Ln\pu\psum}}\right)\norm{\pistar}_2 \nonumber \\
    \leq & 64b^{\frac{5}{2}}(t)\left(\frac{4M\delta}{T\sqrt{n}} + C_3\sqrt{\frac{\log n}{Ln^2\pu\psum}} \right)\norm{\pistar}_{\infty}+\frac{1}{3\sqrt{n}}\norm{\piest-\pistar}_{\infty} \nonumber \\
    & + \left( 512\frac{M\delta nb^{\frac{7}{2}}(t)}{T} + 64 \Tilde{C}_2 b^{\frac{9}{2}}(t)\sqrt{\frac{1}{Ln\pu\psum}}\right)\norm{\pistar}_{\infty} \nonumber \\
   &\leq  \left(C_3'\frac{Mn\delta b^{\frac{7}{2}}(t)}{T} + C_4' \frac{b^{\frac{9}{2}}(t)}{\sqrt{Ln\pu\psum}}\right)\norm{\pistar}_{\infty} + \frac{1}{3\sqrt{n}}\norm{\piest-\pistar}_{\infty} \label{eq:bound_expI5}
\end{align}
for some constants  $C_3',C_4' \geq 1$.

To bound the deviation term in \eqref{eq:I5m_bd}, we will use Bernstein's inequality. Then, there exists a constant $c\geq 1$ such that for every $m \in [n]$, 
\begin{equation*}
    \prob\left( \abs{I_5^m - \expec[I_5^m \vert \Gum,\Tilde{y}]} \geq c\frac{\sqrt{n\pu\log n}+\log n}{\du}\norm{\pimest-\pistar}_{\infty}\right) \leq 2n^{-10}.
\end{equation*}
Hence using a union bound over $[n]$, we see that there exists a suitably large constant $c' \geq 1$ so that with probability at least $1-O(n^{-9})$,
\begin{equation}
    \abs{I_5^m - \expec[I_5^m \vert \Gum,\Tilde{y}]} \leq c'\sqrt{\frac{\log n}{n\pu}}\norm{\pimest-\pistar}_{\infty} \quad \forall m \in [n]. \label{eq:I5m_dev_term_bd}
\end{equation}
It remains to bound $\norm{\pimest-\pistar}_{\infty}$. 
Using Lemma \ref{lem:I3} along with the triangular inequality
$$\norm{\pimest-\pistar}_{\infty} \leq \norm{\pimest - \piest}_2 + \norm{\piest-\pistar}_{\infty},$$
we have that with probability at least $1-O(n^{-9})$,  
\begin{equation*}
    \norm{\pimest-\pistar}_{\infty} \leq 192 b^{\frac{5}{2}}(t)\left(\frac{4M\delta}{T}+C_3\sqrt{\frac{\log n}{Ln\pu\psum}} \right)\norm{\pistar}_{\infty} + 2\norm{\piest-\pistar}_{\infty}.
\end{equation*}
Upon plugging this in \eqref{eq:I5m_dev_term_bd}, the latter simplifies to
\begin{align}
    &\abs{I_5^m - \expec[I_5^m \vert \Gum,\Tilde{y}]} \nonumber \\ 
    &\leq C_3''\sqrt{\frac{\log n}{n\pu}}\norm{\piest-\pistar}_{\infty} + C_4'' b^{\frac{5}{2}}(t) \sqrt{\frac{\log n}{n\pu}} \left(\frac{4M\delta}{T}+\sqrt{\frac{\log n}{Ln\pu\psum}} \right)\norm{\pistar}_{\infty} \label{eq:bound_varI5}
\end{align}
for some constants $C_3'', C_4'' \geq 1$.


Finally, combining \eqref{eq:bound_nmaxI4}, \eqref{eq:I5m_bd},  \eqref{eq:bound_expI5} and \eqref{eq:bound_varI5}, we conclude that with probability at least $1-O(n^{-9})$, it holds for all $m \in [n]$ that
\begin{align*}
    \abs{I_4^m} \leq & \left(C_7\frac{Mn\delta b^{\frac{7}{2}}(t)}{T} + C_8 \frac{b^{\frac{5}{2}}(t)\max\set{b^2(t),\frac{\log n}{\sqrt{n\pu}}}}{\sqrt{Ln\pu\psum}}\right)\norm{\pistar}_{\infty} + C_9\sqrt{\frac{\log n}{n\pu}}\norm{\piest-\pistar}_{\infty}
\end{align*}
for some constants $C_7,C_8, C_9 \geq 1$.

\section{Proof of Lemma \ref{lem:ptil}} 
\label{appsec:proof_diff_constr_graph}
%
%
\rev{Since $\cardNij$ are i.i.d random variables for each $i < j$, hence $\Gutil$ is distributed as an Erdös-Renyi graph with probability $\putil$ defined as
\[ \putil = \prob\left(\abs{\calN_{12,\delta}(t)} \in \left[ \max\set{1,\frac{\psum}{2}},\max\set{2\psum,6\log\frac{4}{\delta\pmin}}\right]\right).\]
Note that $\delta\pmin \leq \cardN\pmin \leq \psum \leq \cardN\pmax \leq 4\delta\pmax$, thus it follows that 
\begin{itemize}
    \item if $\delta\pmin \geq 3$, then $\Eutil := \set{\set{i,j} \, : \, \cardNij \in \left[ \frac{\psum}{2},2\psum\right]};$
    \item if $\delta\pmax \leq \frac{1}{8}$, then $\Eutil := \set{\set{i,j} \, : \, \cardNij \in \left[ 1,6\log\frac{4}{\delta\pmin}\right]}.$
\end{itemize}
Let us now bound $\putil$ in each case using Chernoff bounds (see Theorem \ref{thm:chernoff}).}
\begin{itemize}

    \item \rev{If $\delta\pmin \gtrsim 1$, then Chernoff's bound implies that $ \putil \geq 1-2e^{-\frac{\psum}{12}} \geq 1-2e^{-1/4}$.
    \item If $\delta\pmax < \frac{1}{8}$, then
    \begin{equation}\label{eq:putil}
        \putil = \prob\left(\abs{\calN_{12,\delta}(t)} \in \left[1,6\log\frac{4}{\delta\pmin}\right]\right) = \pu - \prob\left(\abs{\calN_{12,\delta}(t)} > 6\log\frac{4}{\delta\pmin}\right).
    \end{equation}
    }

    \rev{Using Chernoff's bound, it holds that
    \begin{equation}\label{eq:putil_chernoff}
    \prob\left(\abs{\calN_{12,\delta}(t)} > 6\log\frac{4}{\delta\pmin}\right) \leq \frac{\delta\pmin}{4}.
    \end{equation}
    Moreover, $\delta\pmax < \frac{1}{8}$ implies that $\psum\leq \frac{1}{2}$, and so, one can  bound $\pu$ using Proposition \ref{prop:useful_ineg} as follows.
    \begin{equation}\label{eq:bound_pu}
        \frac{\delta\pmin}{2}\leq 1-e^{-\delta\pmin}\leq \pu\leq 1-e^{-8\delta\pmax}\leq 8\delta\pmax.
    \end{equation}
    Finally, combining \eqref{eq:putil},\eqref{eq:putil_chernoff} and \eqref{eq:bound_pu}, it holds that $\putil \geq \frac{\delta\pmin}{4}$. The upper bound on $\putil$ comes from \eqref{eq:bound_pu} and the fact that $\putil \leq \pu$. }
\end{itemize}

\end{document}